\newtheorem{lemma}{Lemma}[section]
\newtheorem{theorem}[lemma]{Theorem}
\newtheorem{remark}[lemma]{Remark}
\newtheorem{cor}[lemma]{Corollary}
\newtheorem{proposition}[lemma]{Proposition}
\newcommand{\N}{{\mathbb N}}
\newcommand{\A}{\mathcal A}
\newcommand{\Ro}{{\overline R}}
\newcommand{\Vo}{{\overline V}}
\newcommand{\ad}{{\rm ad}}
\newcommand{\aV}{{\hat V}}
\newcommand{\aR}{{\hat R}}
\newcommand{\ba}{\mathbf { a}}
\newcommand{\bc}{\mathbf { c}}
\newcommand{\bw}{\mathbf { w}}
\newcommand{\bu}{\mathbf { u}}
\newcommand{\bv}{\mathbf { v}}
\newcommand{\bs}{\mathbf { s}}
\newcommand{\Q}{{\mathbb Q}}
\newcommand{\F}{{\mathbb F}}
\newcommand{\Z}{{\mathbb Z}}
\newcommand{\lm}{\lambda}
\newcommand{\lmu}{\lambda_1}
\newcommand{\lmf}{\lambda_1^f}
\newcommand{\lmd}{\lambda_2}
\newcommand{\lmdf}{\lambda_2^f}
\newcommand{\al}{\alpha}
\newcommand{\bt}{\beta}
\newcommand{\M}{\mathcal M}
\newcommand{\Mab}{\mathcal M(\alpha, \beta)}
\title[]{$2$-generated axial algebras of Monster type $(2\bt, \bt)$}
\author{Clara Franchi}
\address{Dipartimento di Matematica e Fisica,
Universit\`a Cattolica del Sacro Cuore,
Via della Garzetta 48,
I-25133 Brescia, Italy}
\email{clara.franchi@unicatt.it}
\author{ Sergey Shpectorov}
\address{School of Mathematics,
University of Birmingham, 
Watson Building, Edgbaston,
Birmingham, B15 2TT, UK}
\email{s.shpectorov@bham.ac.uk}
\author{Mario Mainardis}
\address{Dipartimento di Scienze Matematiche, Informatiche e Fisiche, 
Universit\`a degli Studi di Udine, via delle Scienze 206,
I-33100 Udine, Italy}
\email{mario.mainardis@uniud.it}
\begin{document}

\begin{abstract}
Axial algebras of Monster type $(\al, \bt)$ are a class of non-associative algebras that includes, besides associative algebras, other important examples such as the Jordan algebras and the Griess algebra. $2$-generated  primitive  axial algebras of Monster type $(\al, \bt)$ naturally split into three cases: the case when $\al\not \in \{2\bt, 4\bt\}$, the case $\al=4\bt$ and $\al=2\bt$. In this paper we give a complete classification  all $2$-generated  primitive  axial algebras of Monster type $(2\beta, \beta)$.\end{abstract}

\maketitle

\section{Introduction}

This paper is part of a programme aimed at classifying all $2$-generated 
 axial algebras of Monster type $(\alpha, \beta)$ over a field of characteristic other than $2$. Such algebras appear in different areas of mathematics and physics and are of particular interest for the study of several classes of finite simple groups,  such  as the  $3$-transposition groups, and many of the sporadics, including the Monster (see~\cite{FMI, IvInd}  and the introductions of~\cite{R, KMS}, and ~\cite{FMS1}).  Just like for Lie algebras, a fundamental step towards the understanding of axial algebras of Monster type is the classification of the $2$-generated ones. This project  has its origins in a work by S. Norton~(see~\cite{N96}), who classified the $2$-generated subalgebras of the Griess algebra (which is a real axial algebra of Monster type $(\tfrac{1}{4},\tfrac{1}{32})$).   The general case was first attacked by F.~Rehren ~\cite{R, RT}, who  proved that every $2$-generated primitive axial algebra of Monster type $(\alpha, \beta)$ over a ring $R$ in which $2$, $\alpha$, $\bt$, $\alpha-\bt$,  $\alpha-2\bt$, and $\alpha-4\bt$ are invertible can be generated as $R$-module by $8$ vectors and computed the structure constants with respect to these elements.  In that paper the main case subdivision for this project is implicitly indicated, namely:
 \begin{enumerate}
 \item[-] {\it the $\alpha\not \in\{2\beta, 4\beta\}$ case}; 
 \item[-]{\it the $\alpha=2\beta$ case};
 \item[-]{\it the $\alpha=4\beta$ case}.
 \end{enumerate}
Each of these cases is divided into two further subcases, depending whether the algebra admits an automorphism that swaps the two generating axes (the {\it symmetric case}) or not  (the {\it non symmetric case}).
 
 At present, results of T. Yabe~\cite{Yabe}, C. Franchi, M.Mainardis, S. Shpectorov, and J. McInroy~\cite{HW, FM, FMM},  give a complete understanding of the symmetric case.
 
 The non symmetric case is still wide open. A first result in this direction is the classification of the $2$-generated primitive axial algebras over $\Q(\alpha,\beta)$, where $\alpha$ and $\beta$ are algebraically independent indeterminates over $\Q$ obtained in~\cite{FMS1}. In that case it turns out that every such algebra is symmetric. There are, however, non symmetric examples, e.g. the algebra $Q_2(\beta)$ constructed in~\cite{JoshiPhD, DM}, by V.~Joshi.   Another family of examples, pointed out by Michael Turner~\cite{Turner}, is given by the Matsuo algebras  
$3C(\al)$. Such an algebra has basis $b_0,b_1,b_2$ and multiplication defined by $ab=\frac{\al}{2}(a+b-c)$, whenever $\{a,b,c\}=\{b_0, b_1, b_2\}$; each $b_i$ is an axis of Jordan type $\al$ and in fact the algebra itself is a primitive $2$-generated algebra of Jordan type $\al$. Provided $\al\neq -1$, it has an identity $\mathbbm 1=\frac{1}{1+\al}(b_0+b_1+b_2)$. It is straighforward to see that, for $i\in \{1,2,3\}$,  $a_i:=\mathbbm 1-b_i$ is an axis of Jordan type $1-\al$. Hence, for $\al\neq \frac{1}{2}$, with respect to the generators $a_i, b_i$ (for any fixed $i$), the algebra is a primitive $2$-generated algebra of Monster type $(\al, 1-\al)$ and it is clearly non-symmetric. We'll denote this algebra by $3C(\al, 1-\al)$.
 
 In this paper we deal with the case $\alpha=2\beta$ and give a complete classification  of such algebras, namely we prove

 \begin{theorem}\label{thm2} 
  Let $V$ be a  primitive axial algebra of Monster type $(2\bt, \bt)$ over a field $\F$ of characteristic other than $2$. Then one of the following holds
\begin{enumerate}
\item $V$ is symmetric; 
\item $V$ is isomorphic to $Q_2(\bt)$, or, when $\bt=-\frac{1}{2}$, to the $3$-dimensional quotient of $Q_2(\bt)$;
\item $V$ is isomorphic to the algebra $3C(\frac{2}{3}, \frac{1}{3})$.  
\end{enumerate}
\end{theorem}
 
The paper is organised as follows: in Section~\ref{table} we recall the basic definitions and properties of axial algebras which will be needed in the sequel. In Section~\ref{mult} we construct a universal object for the category of $2$-generated primitive axial algebras of Monster type $(2\beta, \beta)$, in a similar way as we did  in~\cite[Section 4]{FMS1} for the case where $\al\not \in \{2\bt, 4\bt\}$, and we prove that it is linearly spanned by $8$ vectors. 
In Section~\ref{strategy} we apply the machinery developed in~\cite[Section 5]{FMS1} to the case $\alpha=2\beta$. Precisely, we consider a ring  $R$ of  characteristic other than $2$, we denote by $R_0$ the prime subring of $R$ and let $R_0[\frac{1}{2}, \bt, \frac{1}{ \beta}][x, y, z, t]$ be the polynomial ring in $4$ variables over $R_0[\frac{1}{2}, \bt, \frac{1}{ \beta}]$.  For a subset $T$ of $R_0[\frac{1}{2}, \bt, \frac{1}{ \beta}][x, y, z, t]$, let ${\mathcal V}(T)$ be the set of common zeroes of the elements of $T$. Finally we 
denote by ${\mathcal M_2}(2, R)$ a set of representatives of the isomorphism classes of  $2$-generated primitive axial algebras of Monster type $(2\beta, \beta)$ over $R$. 
With such notation, we prove the following result.

\begin{theorem}\label{nec}
Assume $R$ is a ring of characteristic other than $2$. Then there exists a subset $T \subseteq R_0(\alpha, \beta)[x, y, z, t]$ of size $4$ and a map $$\xi\colon {\mathcal M_2}(2, R)\to {\mathcal V}(T)$$ 
such that, for every $P$ in the image of $\xi$, there exists an element $V_P$ of  ${\mathcal M_2}(2, R)$ with the property that every element in $\xi^{-1}(P)$ is a quotient of $V_P$. 
\end{theorem} 

In Section~\ref{symmetric} we show how Theorem~\ref{nec} can be used to obtain  an alternative and independent proof of Yabe's classification of the primitive symmetric axial algebras of Monster type $(2\beta, \beta)$ over a field (see Theorem~\ref{thm2s}).  Having at hand the classification in the symmetric case, in Section~\ref{ns}  we deal with the  non-symmetric case and prove Theorem~\ref{thm2}.
The key observation is that, for  a  primitive $2$-generated axial algebra of Monster type $V$ with generating axes $a_0$ and $a_1$,  the orbit under the Miyamoto group of $a_i$ ($i\in \{1,2\}$) generates a primitive $2$-generated axial subalgebra of Monster type which is symmetric.


\section{Basics}\label{table}

We start by recalling the definition and basic features of axial algebras.
Let  $R$ be a ring with identity   
and let $\mathcal S$ be a finite subset of $R$ with $1\in\mathcal S$. 
A {\it fusion law} on $\mathcal S$ is a map 
$$\star\colon \mathcal S \times \mathcal S \to 2^{\mathcal S}.$$
An {\it axial algebra}  over $R$ with {\it spectrum} $\mathcal S$ and fusion law $\star$ is a 
commutative non-associative $R$-algebra $V$ generated by a set $\mathcal A$ of nonzero 
idempotents (called {\it axes}) such that, for each $a\in {\mathcal A}$, 
\begin{enumerate}
\item[(Ax1)] $ad(a):v\mapsto av$ is a semisimple endomorphism of $V$ with spectrum contained in 
$\mathcal S$;
\item[(Ax2)] for every $\lm,\mu\in\mathcal S$,  the product of a $\lm$-eigenvector and a 
$\mu$-eigenvector of $\ad_a$ is the sum of $\delta$-eigenvectors, for $\delta\in\lm\star\mu$.
\end{enumerate} 
Furthermore, $V$ is called {\it primitive} if 
\begin{enumerate}
\item[(Ax3)] $V_1=\langle a \rangle$.
\end{enumerate}
An axial algebra over $R$ is said to be {\it of Monster type} $(\al,\bt)$ if it satisfies 
the fusion law $\Mab$ given in Table~\ref{Ising}, with $\al,\bt\in R\setminus\{0,1\}$, and 
$\al\neq\bt$.
 \begin{table}
$$ \begin{array}{|c||c|c|c|c|}
\hline
   \star& 1    & 0    & \alpha      &  \beta\\
   \hline
   \hline
   1     & 1    & \emptyset    &  \alpha      & \beta\\
   \hline
   0     & \emptyset    & 0    & \alpha      & \beta\\
   \hline
    \alpha  & \alpha &  \alpha & 1,0  &  \beta\\
   \hline
   \beta&  \beta& \beta& \beta& 1,0, \alpha\\
    \hline
    \end{array}
    $$
     \caption{ Fusion law $\M( \alpha, \beta)$}\label{Ising}
    \end{table}

Let $V$ be an axial algebra of Monster type $(\al, \bt)$ and  let $a\in \mathcal A$.
   Let ${\mathcal S}^+:=\{1,0, \alpha\}$ and ${\mathcal S}^-:=\{ \beta\}$. The partition $\{{\mathcal S}^+, {\mathcal S^-}\}$ of $\mathcal S$ induces a $\Z_2$-grading on ${\mathcal S}$ which, on turn, induces, a $\Z_2$-grading $\{V_+^a, V_-^a\}$ on $V$, where $V_+^a:=V_1^a+V_0^a+V_{\alpha}^a$ and $V_-^a=V_{ \beta}^a$. It follows that, if $\tau_a$ is the map from $R\cup V$ to $R\cup V$ such that $\tau_{a|_V}$ is the multiplication by $\varepsilon$ on $V_{\epsilon}^a$ for $\varepsilon \in \{\pm1\}$ 
   and $\tau_{a|_R}$ is the identity, then   $\tau_a$  is  an involutory automorphism of $V$ (see~\cite[Proposition 3.4]{HRS}). The map  $\tau_a$ is called the {\it Miyamoto involution} associated to the axis $a$. 
By definition of $\tau_a$, the element $av-{ \beta} v$ of $V$ is $\tau_a$-invariant and, since $a$ lies in $V_+^a\leq C_V(\tau_a)$, also $av-{ \beta}(a+v)$ is $\tau_a$-invariant. In particular, by symmetry,
  
\begin{lemma}\label{invariant}
Let $a$ and $b$ be axes of $V$. Then $ab- \beta(a+b)$ is fixed by the 2-generated group $\langle \tau_a, \tau_b\rangle$.
\end{lemma}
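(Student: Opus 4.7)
The paragraph immediately preceding the lemma essentially lays out the entire argument; the plan is simply to record it carefully and invoke commutativity to get the symmetric conclusion.

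First I would expand an arbitrary $v\in V$ in the eigenspace decomposition of $\ad_a$ afforded by (Ax1), writing $v = v_1 + v_0 + v_\alpha + v_\beta$ with $v_\lambda \in V_\lambda^a$. Then
\[
av - \beta v = (1-\beta) v_1 - \beta v_0 + (\alpha-\beta) v_\alpha + 0 \cdot v_\beta,
\]
which lies in $V_+^a = V_1^a \oplus V_0^a \oplus V_\alpha^a$. By the $\Z_2$-grading description of $\tau_a$ recalled just above the lemma, every element of $V_+^a$ is fixed by $\tau_a$, so $av - \beta v$ is $\tau_a$-invariant. Since $a \in V_1^a \subseteq V_+^a$ is itself $\tau_a$-fixed, subtracting the further multiple $\beta a$ still gives a $\tau_a$-invariant vector; that is, $av - \beta(a+v)$ is fixed by $\tau_a$ for every $v\in V$.

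Specializing to $v = b$ yields that $ab - \beta(a+b)$ is fixed by $\tau_a$. Now I would use commutativity of $V$ together with the symmetry of the expression in $a$ and $b$: since $ab = ba$ and $a+b = b+a$, we have
\[
ab - \beta(a+b) = ba - \beta(b+a),
\]
so the same argument with the roles of $a$ and $b$ exchanged (using instead the eigenspace decomposition of $\ad_b$) shows that $ab - \beta(a+b)$ is also fixed by $\tau_b$. Hence it is fixed by the subgroup $\langle \tau_a, \tau_b\rangle$ of $\mathrm{Aut}(V)$, proving the lemma.

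There is really no obstacle here beyond bookkeeping: the only point to be careful about is that the Miyamoto involution $\tau_a$ was defined to act trivially on $R$ and as $+1$ on $V_+^a$, so that $R$-linear combinations of $\tau_a$-fixed vectors (like $av-\beta v$ and $-\beta a$) remain fixed, which is what legitimizes the symmetric step.
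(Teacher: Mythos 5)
Your proposal is correct and is essentially the paper's own argument: the paper proves the lemma by the remark immediately preceding it (that $av-\beta v$ lies in $V_+^a$ and hence is $\tau_a$-fixed, that subtracting $\beta a$ preserves this, and that symmetry in $a$ and $b$ gives $\tau_b$-invariance), which is exactly what you have written out in detail. No gaps.
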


If $V$ is generated by the set of axes $\mathcal A:=\{a_0, a_1\}$,  for $i\in \{1,2\}$. Set $\rho:=\tau_{a_0}\tau_{a_1}$, and for $i\in \Z$, $a_{2i}:=a_0^{\rho^i}$ and $a_{2i+1}:=a_1^{\rho^i}$. Since $\rho$ is an automorphism of $V$, for every $j\in \Z$,  $a_j$ is an axis. Denote by $\tau_j:=\tau_{a_j}$ the corresponding Miyamoto involution.

\begin{lemma}\label{invariances}
For every $n\in \N$,  and $i,j\in \Z$ such that $i\equiv j \: \bmod n$ we have
$$
a_ia_{i+n}-\beta(a_i+a_{i+n})=a_ja_{j+n}-\beta(a_j+a_{j+n}),
$$
\end{lemma}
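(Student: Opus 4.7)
The plan is to treat $c(i,n) := a_i a_{i+n}-\beta(a_i+a_{i+n})$ as the object of interest and show it is invariant under the index-shift $i\mapsto i+n$; iterating then gives the statement, since any two indices congruent modulo $n$ differ by a multiple of $n$.

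First I would note that Lemma~\ref{invariant}, applied to the pair of axes $a_i$ and $a_{i+n}$, tells us directly that $c(i,n)$ is fixed by both $\tau_{a_i}$ and $\tau_{a_{i+n}}$. The substantive ingredient is then to identify how these Miyamoto involutions permute the whole chain of axes $\{a_k : k \in \Z\}$. I would argue that $\tau_{a_j}(a_k)=a_{2j-k}$ for all $j,k\in\Z$. This is first checked for $j=0,1$: $\tau_0$ fixes $a_0$ and sends $a_1\mapsto a_{-1}$ (via $\rho^{-1}=\tau_1\tau_0$), while $\tau_1$ fixes $a_1$ and sends $a_0\mapsto a_2$ (via $\rho=\tau_0\tau_1$), so on all $a_k$ obtained by iterating $\rho$ these two involutions act exactly as the reflections of $\Z$ across $0$ and $1$. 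For general $j$, since $a_j=a_{\epsilon}^{\rho^m}$ with $\epsilon\in\{0,1\}$ and $m\in\Z$, the equivariance of the Miyamoto involution under the automorphism $\rho^m$ gives $\tau_{a_j}=\rho^{-m}\tau_{a_\epsilon}\rho^m$, and a short computation then yields $\tau_{a_j}(a_k)=a_{2j-k}$.

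With this action in hand, the invariance step is immediate: applying the algebra automorphism $\tau_{a_{i+n}}$ to $c(i,n)$ and using $\tau_{a_{i+n}}(a_i)=a_{i+2n}$ together with $\tau_{a_{i+n}}(a_{i+n})=a_{i+n}$ produces
$$\tau_{a_{i+n}}\bigl(c(i,n)\bigr)=a_{i+n}a_{i+2n}-\beta(a_{i+n}+a_{i+2n})=c(i+n,n),$$
whence $c(i,n)=c(i+n,n)$. Applying $\tau_{a_i}$ analogously gives $c(i,n)=c(i-n,n)$. A trivial induction on $|k|$ then yields $c(i,n)=c(i+kn,n)$ for every $k\in\Z$, which is exactly the claim.

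I expect the only non-routine part to be justifying the formula $\tau_{a_j}(a_k)=a_{2j-k}$ cleanly, since the axes $a_j$ are defined via alternating conjugates of $a_0,a_1$ by powers of $\rho$, and one must keep track of whether $j$ is even or odd; however, the equivariance relation $\tau_{\phi(a)}=\phi\tau_a\phi^{-1}$ for any automorphism $\phi$ collapses both cases to the verification in the base cases $j=0,1$.
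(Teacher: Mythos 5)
Your proof is correct and fills in exactly the argument the paper leaves implicit: its proof of this lemma is the one-liner ``This follows immediately from Lemma~\ref{invariant}'', and the only route from that lemma to the statement is the one you take, namely that $\tau_{a_{i+n}}$ fixes $a_ia_{i+n}-\beta(a_i+a_{i+n})$ while acting on indices by $k\mapsto 2(i+n)-k$, so that each reflection about an endpoint shifts the pair $(i,i+n)$ to $(i+n,i+2n)$. Your verification of $\tau_{a_j}(a_k)=a_{2j-k}$ via the base cases $j=0,1$ and conjugation by powers of $\rho$ is the standard justification and is sound.
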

\begin{proof}
This follows immediately from Lemma~\ref{invariant}.
\end{proof}

For $ n\in \N $ and $r\in \{0, \ldots , n-1\}$ set
\begin{equation}\label{s}
s_{r,n}:=a_ra_{r+n}-\beta (a_r+a_{r+n}). 
\end{equation}

If $\{0,1,\al, \bt\}$ are pairwise distinguishable in $R$, i.e. $\al$, $\bt$, $\al-1$, $\bt-1$, and $\al-\bt$ are invertible in $R$, by~\cite[Proposition~2.4]{FMS1}, for every $a\in \mathcal A$, there is a function $\lambda_a: V\to R$, such that every $v\in V$ can be written as
$$v=\lm_a(v)a+u \mbox{ with } u\in \bigoplus_{\delta\neq 1}V_\delta^a.
$$
\begin{remark}
Note that, when $R$ is a field, then $\{0,1,\al, \bt\}$ are always pairwise distinguishable in $R$. Since we are interested in algebras over fields, from now on we assume $0,1,\al, \bt$ are pairwise distinguishable in $R$. For the same reason, since the characteristic cannot be $2$, we also assume  that $2$ is invertible in $R$.
\end{remark}

For $i\in \Z$, let 
 \begin{equation}\label{ai}
 a_i=\lm_{a_0}(a_i) a_0+u_i+v_i +w_i
 \end{equation}
 be the decomposition of $a_i$ into $ad_{a_0}$-eigenvectors, where $u_i$ is a $0$-eigenvector, $v_i$ is an $\alpha$-eigenvector and $w_i$ is a $\beta$-eigenvector. 
 
  \begin{lemma}
\label{lambdaa}
With the above notation,   
\begin{enumerate}
\item $u_i=\frac{1}{\alpha} ((\lambda_{a_0}(a_i) - \beta - \alpha \lambda_{a_0}(a_i)) a_0 + \frac{1}{2}(\alpha - \beta) (a_i + a_{-i})-s_{0,i} )$;
\item $v_i=\frac{1}{\alpha} ((\bt-\lambda_{a_0}(a_i))a_0+\frac{\bt}{2}(a_i+a_{-i})+s_{0,i})$;
\item $w_i=\frac{1}{2}(a_i-a_{-i})$.
\end{enumerate}
\end{lemma}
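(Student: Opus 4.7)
The idea is to pin down the three unknown components $u_i, v_i, w_i$ using two independent devices: the Miyamoto involution $\tau_0$, which separates $V_\bt^{a_0}$ from $V_+^{a_0}$, and the product $a_0 a_i$, which further separates the $0$-eigenspace from the $\al$-eigenspace inside $V_+^{a_0}$.

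First, I would observe that $\tau_0(a_i) = a_{-i}$ for every $i\in\Z$, a direct consequence of the dihedral action of $\langle\tau_0,\tau_1\rangle$ on the orbit $\{a_j\}_{j\in\Z}$ (concretely, $\tau_0\rho^k\tau_0 = \rho^{-k}$ settles the even case, and combining this with $a_1^{\tau_0} = a_{-1}$ settles the odd one). Applying $\tau_0$ to~(\ref{ai}), and using that $\tau_0$ fixes $V_+^{a_0}$ pointwise and acts by $-1$ on $V_\bt^{a_0}$, yields
$$a_{-i} = \lm_{a_0}(a_i)\, a_0 + u_i + v_i - w_i.$$
Subtracting this from~(\ref{ai}) produces $w_i = \frac{1}{2}(a_i-a_{-i})$, i.e. statement (3), while adding them gives the auxiliary identity $u_i+v_i = \frac{1}{2}(a_i+a_{-i}) - \lm_{a_0}(a_i)\, a_0$.

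Next, multiplying~(\ref{ai}) by $a_0$ and using $a_0u_i=0$, $a_0v_i=\al v_i$, $a_0w_i=\bt w_i$ gives $a_0a_i = \lm_{a_0}(a_i)\, a_0 + \al v_i + \bt w_i$. Plugging this and~(\ref{ai}) into the definition $s_{0,i}=a_0a_i-\bt(a_0+a_i)$, the $w_i$-terms cancel and one is left with the second linear relation
$$s_{0,i} = \bigl(\lm_{a_0}(a_i)-\bt-\bt\lm_{a_0}(a_i)\bigr) a_0 - \bt u_i + (\al-\bt)\, v_i.$$
Together with the auxiliary identity $u_i+v_i = \frac{1}{2}(a_i+a_{-i}) - \lm_{a_0}(a_i)\, a_0$, this is a $2\times 2$ linear system in the unknowns $u_i,v_i$ whose determinant equals $\al$, invertible in $R$ by hypothesis. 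Solving for $v_i$ yields the formula in (2), and back-substituting into the auxiliary identity yields (1). The whole argument is elementary linear algebra, so I expect no real obstacle; the only slightly delicate point is the identification $\tau_0(a_i)=a_{-i}$, which however is routine book-keeping with the dihedral group $\langle\tau_0,\tau_1\rangle$.
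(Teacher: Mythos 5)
Your proof is correct: the computation $\tau_0(a_j)=a_{-j}$, the splitting of $w_i$ via $\tau_0$, and the $2\times 2$ system coming from $s_{0,i}=a_0a_i-\bt(a_0+a_i)$ with determinant $\al$ all check out, and solving it reproduces exactly the stated formulas. The paper itself gives no proof of this lemma (it is imported from the computations in~\cite{FMS1}), and your argument is precisely the standard one that derivation rests on, so there is nothing to add.
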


\begin{lemma}\label{ideal}
Let $I$ be an ideal of $V$, $a$ an axis of $V$, $x\in V$  and let 
$$x=x_1+x_0+x_\alpha+x_\beta
$$ be the decomposition of $x$ as sum of $\ad_a$-eigenvectors. If $x\in I$, then $x_1, x_0, x_\alpha, x_\beta\in I$. Moreover,  $I$ is $\tau_a$-invariant.
\end{lemma}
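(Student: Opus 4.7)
The proof I would give rests on the standard trick of isolating eigencomponents by polynomial projectors, which works here precisely because the assumption in effect since just before Lemma~\ref{lambdaa} guarantees that the four eigenvalues $\{1,0,\alpha,\beta\}$ are pairwise distinguishable in $R$, i.e. $\alpha,\beta,\alpha-1,\beta-1,\alpha-\beta$ are all invertible. Since $I$ is an ideal, it is stable under $\ad_a$: for every $y\in I$ one has $ay\in I$, hence $p(\ad_a)(y)\in I$ for every polynomial $p\in R[T]$.

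The plan is then to write down, via Lagrange interpolation over the four pairwise distinct invertible differences, for each $\lambda\in\{1,0,\alpha,\beta\}$ a polynomial $p_\lambda(T)\in R[T]$ satisfying $p_\lambda(\mu)=\delta_{\lambda\mu}$ for all $\mu\in\{1,0,\alpha,\beta\}$; concretely
$$
p_\lambda(T)=\prod_{\mu\in\{1,0,\alpha,\beta\}\setminus\{\lambda\}}\frac{T-\mu}{\lambda-\mu},
$$
which makes sense precisely because each factor $(\lambda-\mu)^{-1}$ exists in $R$ under our distinguishability hypothesis. By (Ax1) and the decomposition $x=x_1+x_0+x_\alpha+x_\beta$ one has $p_\lambda(\ad_a)(x)=x_\lambda$; since $x\in I$ and $I$ is $\ad_a$-stable, $x_\lambda\in I$ for each $\lambda$. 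This gives the first assertion.

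For the second assertion I would use the explicit formula for the Miyamoto involution: by construction, $\tau_a$ acts as the identity on $V_+^a=V_1^a\oplus V_0^a\oplus V_\alpha^a$ and as $-1$ on $V_\beta^a$, so
$$
\tau_a(x)=x_1+x_0+x_\alpha-x_\beta=x-2x_\beta.
$$
Since $x\in I$ and, by the previous step, $x_\beta\in I$, we conclude $\tau_a(x)\in I$, so $I$ is $\tau_a$-invariant. As $x$ was arbitrary in $I$, this completes the proof.

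There is no real obstacle here; the only thing to be careful about is to invoke the distinguishability hypothesis made just above the lemma, so that the Lagrange projectors $p_\lambda(\ad_a)$ actually make sense over $R$. Everything else is a two-line verification using the definitions of ideal, eigen-decomposition, and Miyamoto involution.
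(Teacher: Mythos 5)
Your proof is correct and is essentially the paper's argument: both isolate the eigencomponents $x_\lambda$ by applying polynomials in $\ad_a$ to $x$ (the paper via successive elimination, you via the Lagrange projectors $p_\lambda(\ad_a)$, which is the same computation packaged differently), using precisely the pairwise distinguishability of $\{1,0,\alpha,\beta\}$ and the $\ad_a$-stability of $I$, and then both deduce $\tau_a$-invariance from $x^{\tau_a}=x-2x_\beta$ with $x_\beta\in I$. No gap.
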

\begin{proof}
Suppose $x\in I$. Then $I$ contains the vectors
\begin{eqnarray*}
x-ax&=&x_0+(1-\alpha)x_\alpha+(1-\bt)x_\bt,\\
a(x-ax)&=&\alpha(1-\alpha)x_\alpha+\bt(1-\bt)x_\bt,\\
a(a(x-ax))-\bt a(x-ax) &=&\alpha(\al-\bt)(1-\alpha)x_\alpha .
\end{eqnarray*}
Since, $0,1,\al, \bt$ are pairwise distinguishable in $R$, it follows that $I$ contains $x_1$, $x_0$, $x_\alpha$, $x_\beta$. Since $x^{\tau_a}=x_1+x_0+x_\alpha-x_\beta\in I$, the last assertion follows.
\end{proof}

\section{The universal object} \label{mult}
From now on we assume that $\al=2\bt$,  $\{1,0,2\bt, \bt\}$ is a  set of  pairwise distinguishable elements in $R$, and $2$ is invertible in $R$. 
Let  $\Vo$ be the universal $2$-generated primitive axial algebra over the ring $\Ro$ as defined in~\cite{FMS1} and let $\mathcal A:=\{ \ba_0, \ba_1\}$ be its  generating set of axes. That is, $\Ro$ and $\Vo$ are defined as follows
\begin{itemize}
\item[-] $D$ is the polynomial ring 
$${\Z}[x_i, y_i, w_i, z_{1,2}, t_1\:|\:i\in \{1, 2\}],$$
where $x_i, y_i, w_i, z_{1,2}, t_1$ are algebraically independent indeterminates over $\Z$, for $i\in \{1, 2\}$; 
\item[-] $L$ is the ideal of $D$ generated by the set 
$$ \Sigma:=\{x_iy_{i}-1, \: (1-x_i)w_{i}-1,\:2t_1-1, \:x_1-2x_2, z_{1,2}\:|\: i\in \{1, 2\}
\};$$ 
\item [-] $\hat D:=D/L$. For $d\in D$, we denote the element $L+d$ by $\hat d$. 
\item [-] $W$ is the free commutative magma generated by the elements of $\A$ subject to the condition that every element of $\A$ is idempotent;

\item[-] $\aR:={\hat D}[\Lambda]$ is the ring of polynomials with coefficients in $\hat D$ and  indeterminates set  $\Lambda:=\{\lambda_{ \bc,  \bw}\:|\: \bc \in \A,  \bw\in W, \bc\neq \bw\}$ where $\lambda_{ \bc,  \bw}=\lambda_{ \bc',  \bw'}$ if and only if $ \bc= \bc'$ and $ \bw= \bw'$.
\item[-] $\aV:=\aR[W]$ is the set of all formal linear combinations $\sum_{\bw\in W}\gamma_\bw \bw$ of the elements of $W$ with coefficients in $\aR$, with only finitely many coefficients different from zero.  Endow $\aV$ with  the usual structure of a commutative non associative $\aR$-algebra.
\end{itemize}

For $i\in \Z$, set  
$$
\lm_i:=\lm_{\ba_0}(\ba_i).
$$

By Corollary~3.8 in~\cite{FMS1}, the permutation that swaps $\ba_0$ and $\ba_1$ induces a  semi-automorphism $f$ of $\Vo$, i.e. $f$ acts on $\Ro$ as a ring homomorphism, on $\Vo$ as a (non-associative) ring homomorphism and  
$(\gamma \bv)^f=\gamma^f \bv^f$, for every $\gamma\in \Ro$ and $\bv \in \Vo$. By~\cite[Lemma~3.2]{FMS1},   
$$
\lambda_{\ba_1}(\ba_0) =\lambda_1^f, \mbox{ and }\:\:
 \lambda_{\ba_1}(\ba_{-1})= \lambda_2^f.
 $$
 
Set $G_0:=\langle \tau_0, \tau_1\rangle$ and $G:=\langle \tau_0, f\rangle$.  The following lemma is the analogue of~\cite[Lemma~4.5]{FMS1}.

\begin{lemma}\label{action}
The groups $G_0$ and $G$ are dihedral groups, $G_0$ is a normal subgroup of $G$ such that $|G:G_0|\leq 2$. 
For every $n\in \N$, the set $\{\bs_{0, n}, \ldots , \bs_{n-1,n}\}$ is invariant under the action of $G$. In particular, if $K_n$ is the kernel of this action, we have 
\begin{enumerate}
\item $K_1=G$;
\item $K_2=G_0$, in particular $\bs_{0,2}^f=\bs_{1,2}$; 
\item $G/K_3$ induces the full permutation group on the set $\{\bs_{0, 3}, \bs_{1, 3}, \bs_{2,3}\}$ with point stabilisers generated by $\tau_0K_3$, $\tau_1K_3$ and $fK_3$, respectively. In particular $\bs_{0,3}^f=\bs_{1,3}$ and $\bs_{0,3}^{\tau_1}=\bs_{2,3}$. 
\end{enumerate}  
\end{lemma}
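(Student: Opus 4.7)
The plan is to split the argument into two parts: first I would establish the dihedral structure of $T_0$ and $T$, and then I would describe how $T$ acts on the indexed family $\{a_i\}_{i\in\Z}$, deducing the invariance of each set $\{s_{0,n},\ldots,s_{n-1,n}\}$ and finally computing the three kernels. For the group-theoretic claims, $T_0=\langle\tau_0,\tau_1\rangle$ is generated by two involutions, hence dihedral. By Corollary~3.7 in~\cite{FMS1}, $f$ is an involutory automorphism of $\Vo$ swapping $a_0$ and $a_1$, so it conjugates the Miyamoto involutions as $\tau_0^f=\tau_1$ and $\tau_1^f=\tau_0$. Thus $f$ normalises $T_0$, giving $T_0\trianglelefteq T$, and $T/T_0$ is generated by the coset of $f$, whence $|T:T_0|\leq 2$. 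Since $T=\langle\tau_0,f\rangle$ is itself generated by two involutions, $T$ is dihedral.

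Next I would read off explicitly how each generator of $T$ permutes the axes $a_i$. The definitions $a_{2i}=a_0^{\rho^i}$, $a_{2i+1}=a_1^{\rho^i}$ immediately yield $\rho\colon a_i\mapsto a_{i+2}$. Using $\tau_0\rho\tau_0^{-1}=\rho^{-1}$ together with $a_0^{\tau_0}=a_0$ gives $\tau_0\colon a_i\mapsto a_{-i}$; an analogous computation with $\tau_1$ gives $\tau_1\colon a_i\mapsto a_{2-i}$; and since $f$ swaps $a_0, a_1$ and also inverts $\rho$, one obtains $f\colon a_i\mapsto a_{1-i}$. Thus every generator of $T$ acts on the indices by an affine map (a translation or a reflection) that preserves the gap $n$ up to sign. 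Applying such a map to $s_{r,n}=a_r a_{r+n}-\beta(a_r+a_{r+n})$ produces an element of the form $a_{r'}a_{r'+n}-\beta(a_{r'}+a_{r'+n})$, which equals $s_{r'\bmod n,\,n}$ by Lemma~\ref{invariances}; hence the set $\{s_{0,n},\ldots,s_{n-1,n}\}$ is $T$-invariant.

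It remains to identify $K_n$ for $n\leq 3$. For $n=1$ the set is a singleton, so trivially $K_1=T$. For $n=2$, the maps $r\mapsto -r$ and $r\mapsto 2-r$ preserve parity, so both $\tau_0$ and $\tau_1$ fix each of $s_{0,2}, s_{1,2}$, giving $T_0\leq K_2$; the map $r\mapsto 1-r$ induced by $f$ reverses parity, so $f$ swaps $s_{0,2}$ and $s_{1,2}$, confirming $K_2=T_0$ and $s_{0,2}^f=s_{1,2}$. For $n=3$, I would tabulate the action: $\rho$ acts as the $3$-cycle $(s_{0,3}\, s_{2,3}\, s_{1,3})$, while the reflections $\tau_0,\tau_1,f$ act as the three transpositions, each fixing exactly one of $s_{0,3}, s_{1,3}, s_{2,3}$, respectively. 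These six permutations exhaust $\Sym(\{s_{0,3},s_{1,3},s_{2,3}\})$ and the point stabilisers are as claimed, with $s_{0,3}^f=s_{1,3}$ and $s_{0,3}^{\tau_1}=s_{2,3}$ falling out of the same table. The argument is essentially formal once the actions on indices are written down; the only real place to slip up — and thus the main obstacle, such as it is — lies in correctly reducing the negative indices modulo $n$ when invoking Lemma~\ref{invariances}, which is a routine but sign-sensitive bookkeeping task.
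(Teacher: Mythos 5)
Your proof is correct and complete; the actions you compute ($\rho\colon a_i\mapsto a_{i+2}$, $\tau_0\colon a_i\mapsto a_{-i}$, $\tau_1\colon a_i\mapsto a_{2-i}$, $f\colon a_i\mapsto a_{1-i}$) and the resulting permutations of the $s_{r,n}$ all check out. The paper simply states that the lemma ``follows immediately from the definitions,'' so your argument is exactly the intended one, with the routine bookkeeping written out in full.
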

\begin{proof}

This  follows immediately from the definitions.
\end{proof}

For $i,j\in \{1,2,3\}$, with the notation fixed before Lemma~\ref{lambdaa}, set
  $$
 P_{ij}:=\bu_i\bu_j + \bu_i\bv_j\:\: \mbox{ and } \:\:Q_{ij}:=\bu_i \bv_j+\frac{1}{\alpha^2}\bs_{0,i}\bs_{0,j}.
 $$
 
  \begin{lemma}
  \label{psi}
 For $i,j\in \{1,2,3\}$ we have
 \begin{equation}\label{ss}
 \bs_{0,i}\cdot \bs_{0,j}=-\alpha (\ba_0P_{ij}- \alpha Q_{ij}).
 \end{equation}
\end{lemma}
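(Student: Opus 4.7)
The plan is to verify the identity by a direct expansion of the right-hand side, using the $\Mab$-fusion law at the axis $a_0$ to evaluate the single non-trivial quantity $a_0P_{ij}$. The key observation is that both $P_{ij}$ and $Q_{ij}$ are built from exactly the two products $u_iu_j$ and $u_iv_j$, so the whole calculation reduces to understanding how $\ad_{a_0}$ acts on these two vectors.

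First, I would invoke the eigenvector decomposition $(\ref{ai})$ to note that $u_i,u_j\in V_0^{a_0}$ and $v_j\in V_\alpha^{a_0}$. Reading off Table~\ref{Ising} (with $\alpha=2\bt$) gives $0\star 0=\{0\}$ and $0\star\alpha=\{\alpha\}$, so from axiom (Ax2) we obtain $u_iu_j\in V_0^{a_0}$ and $u_iv_j\in V_\alpha^{a_0}$. Applying $\ad_{a_0}$ therefore yields
$$a_0(u_iu_j)=0 \qquad\text{and}\qquad a_0(u_iv_j)=\alpha\,u_iv_j,$$
and summing the two produces $a_0P_{ij}=\alpha\,u_iv_j$.

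Substituting this identity together with the definition of $Q_{ij}$ into $\alpha(a_0P_{ij}-\alpha Q_{ij})$ gives $\alpha^2u_iv_j-\alpha^2u_iv_j+s_{0,i}s_{0,j}$, and after the cancellation the right-hand side is exactly $s_{0,i}s_{0,j}$, as required. No serious obstacle is anticipated: the identity is essentially a tautological repackaging of the two fusion-law observations above. Its real role is strategic, namely to encode $s_{0,i}s_{0,j}$ in a form in which the explicit formulas for $u_i$ and $v_i$ furnished by Lemma~\ref{lambdaa} can subsequently be substituted, eventually expressing $s_{0,i}s_{0,j}$ as an honest polynomial combination of the axes $a_k$ — which is what is needed to build the spanning set of $8$ vectors announced in Theorem~\ref{thm1}.
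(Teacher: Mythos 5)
Your proof is correct and follows exactly the paper's own route: the fusion law gives $a_0(u_iu_j)=0$ and $a_0(u_iv_j)=\alpha\,u_iv_j$, hence $a_0P_{ij}=\alpha\,u_iv_j$, and the identity then reduces to the algebraic cancellation you carry out. The paper's proof is just a compressed version of the same argument.
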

 \begin{proof}
Since $\bu_i$ and $\bv_j$ are a $0$-eigenvector and an $\alpha$-eigenvector for $\ad_{\ba_0}$, respectively,
 by the fusion rule, we have $\ba_0P_{ij}=\alpha(\bu_i\cdot \bv_j)$ and the result follows.  
\end{proof}

The following polynomial will play a crucial r\^ole in the classification of the non symmetric algebras in Section~\ref{ns}:
$$
Z(x,y):=\frac{2}{\bt}x+\frac{(2\bt-1)}{\bt^2}y-\frac{(4\bt-1)}{\bt}.
$$

\begin{lemma}
\label{a0s2f}
In $\Vo$ the following equalities hold:
\begin{eqnarray*}
\bs_{0,2}& =& -\frac{\beta}{2}(\ba_{-2}+\ba_2)+\bt Z(\lmu, \lmf)(\ba_1+\ba_{-1})\\
&-&\left [ 2 Z(\lmu, \lmf)(\lmu-\bt)-(\lmd-\bt) \right ]\ba_0+ 2Z(\lmu, \lmf)\bs_{0,1}.
\end{eqnarray*}
and 
\begin{eqnarray*}
\bs_{1,2}& =& -\frac{\beta}{2}(\ba_{-1}+\ba_3)+\bt Z(\lmf, \lmu)(\ba_0+\ba_2)\\
&-&\left [ 2 Z(\lmf, \lmu)(\lmf-\bt)-(\lmdf-\bt) \right ]\ba_1+ 2Z(\lmf, \lmu)\bs_{0,1}.
\end{eqnarray*}
\end{lemma}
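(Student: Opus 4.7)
The second identity follows from the first by applying the automorphism $f$ of Lemma~\ref{action}: $f$ exchanges $a_0$ and $a_1$, sends $a_{-1}$ to $a_2$ and $a_{-2}$ to $a_3$, fixes $s_{0,1}$, takes $s_{0,2}$ to $s_{1,2}$, and transforms $Z(\lambda_1,\lambda_1^f)$ into $Z(\lambda_1^f,\lambda_1)$; so the $f$-image of the first identity is exactly the second, and I concentrate on the first. My plan is first to rewrite both sides in the $\ad_{a_0}$-eigenspace decomposition by means of Lemma~\ref{lambdaa}: from
$$s_{0,i}=(\lambda_i-\beta-\beta\lambda_i)a_0-\beta u_i+\beta v_i\quad\text{and}\quad a_i+a_{-i}=2\lambda_ia_0+2u_i+2v_i,$$
the $1$-, $0$- and $\beta$-components of both sides match automatically (the $\beta$-contributions vanish on both sides because none of $s_{0,i}$, $a_i+a_{-i}$, or $a_0$ involves $w_i$). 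The full identity then collapses to the single scalar relation
$$v_2 \;=\; 2\,Z(\lambda_1,\lambda_1^f)\,v_1$$
in $V_{2\beta}^{a_0}$, which is the core content.

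To prove this reduced identity, I would first use $a_2=a_0^{\tau_1}$ to read off the $\beta$-part of $a_0$ with respect to $\ad_{a_1}$, namely $w_1'=\frac{1}{2}(a_0-a_2)$. Combining $a_1a_0=\lambda_1^fa_1+2\beta v_1'+\beta w_1'$ with $a_1a_0=s_{0,1}+\beta(a_0+a_1)$ then gives
$$v_1'\;=\;\frac{1}{2\beta}s_{0,1}+\frac{1}{4}(a_0+a_2)+\frac{\beta-\lambda_1^f}{2\beta}a_1.$$
The plan is next to evaluate $a_0v_1'$ in two different ways: first by direct expansion, using $a_0a_1=s_{0,1}+\beta(a_0+a_1)$ together with the auxiliary identity
$$a_0s_{0,1}\;=\;\beta s_{0,1}+\frac{\beta^2}{2}(a_1+a_{-1})+\bigl(\beta(\beta-1)-\lambda_1(2\beta-1)\bigr)a_0,$$
which is immediate from $a_0u_1=0$ and $a_0v_1=2\beta v_1$; and secondly by reading off the $\ad_{a_0}$-eigendecomposition of $v_1'$, whose $2\beta$-part is $\frac{1}{4}v_2+\frac{2\beta-\lambda_1^f}{2\beta}v_1$. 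Matching the two expressions in $V_{2\beta}^{a_0}$ and invoking the fusion identity $a_1v_1'=2\beta v_1'$ to supply the additional coupling needed to break the tautological symmetry would then yield the desired relation with the explicit coefficient $2Z(\lambda_1,\lambda_1^f)$.

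The main obstacle is that naive eigenspace comparisons of $v_1'$ or $a_0v_1'$ merely reproduce the defining formula for $v_1'$ and give no new information. The non-trivial content has to be extracted by simultaneously using that $v_1'$ is a $2\beta$-eigenvector of $\ad_{a_1}$ \emph{and} decomposes in a prescribed way under $\ad_{a_0}$, closed through the Miyamoto relation $a_2=a_0^{\tau_1}$. It is this two-sided use of the fusion law that forces the specific combination $\frac{2}{\beta}\lambda_1+\frac{2\beta-1}{\beta^2}\lambda_1^f-\frac{4\beta-1}{\beta}$ to appear as $Z(\lambda_1,\lambda_1^f)$.
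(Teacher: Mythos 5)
Your reduction of the first identity to the single relation $v_2=2Z(\lmu,\lmf)v_1$ in $V_{2\bt}^{a_0}$ is correct (substituting $s_{0,i}=(\lambda_i-\bt-\bt\lambda_i)a_0-\bt u_i+\bt v_i$ and $a_i+a_{-i}=2\lambda_ia_0+2u_i+2v_i$ does make the $1$-, $0$- and $\bt$-components match, leaving exactly $2\bt v_2=4\bt Z v_1$), and your derivation of the second identity from the first by applying $f$ is precisely what the paper does. The paper, however, does not re-derive the first identity at all: it obtains it by specialising $\al=2\bt$ in the already-established formula of \cite[Lemma~4.7]{FMS1}.

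The core step of your argument has a genuine gap: the mechanism you propose for proving $v_2=2Z(\lmu,\lmf)v_1$ is circular. Every ingredient you use --- the expression for $v_1'$, the product $a_0s_{0,1}$ (and its $f$-image $a_1s_{0,1}$), the products $a_1a_0=s_{0,1}+\bt(a_0+a_1)$ and $a_1a_2=s_{0,1}+\bt(a_1+a_2)$ from Lemma~\ref{invariances}, and the eigenvalue relations $a_0u_i=0$, $a_0v_i=2\bt v_i$, $a_1v_1'=2\bt v_1'$ --- is a formal consequence of (Ax1), (Ax3) and the $\tau$- and $f$-symmetries alone. In particular, substituting the three known products into $a_1v_1'$ gives exactly $s_{0,1}+\frac{\bt}{2}(a_0+a_2)+(\bt-\lmf)a_1=2\bt v_1'$, so the identity $a_1v_1'=2\bt v_1'$ that you invoke to ``break the tautological symmetry'' reduces to $0=0$ and supplies no coupling to $v_2$ or $s_{0,2}$; likewise both of your evaluations of $a_0v_1'$ yield the same expression $\frac{\bt}{2}v_2+\frac{2\bt-\lmf}{\bt}\,\bt\, v_1$ term by term (the coefficient of $s_{0,2}$ is $\frac14$ on both sides). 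No step of your proposal ever uses the fusion law (Ax2), i.e.\ a constraint on a product of two \emph{distinct-eigenvalue} eigenvectors such as $u_1w_1\in V_\bt^{a_0}$ or $v_1w_1\in V_\bt^{a_0}$; but (Ax2) is the only possible source of the relation $v_2=2Z(\lmu,\lmf)v_1$, since without it the universal algebra imposes no relation expressing $a_0a_2$ in terms of $a_0,a_{\pm1},a_{\pm2},s_{0,1}$. This is exactly the content that \cite[Lemma~4.7]{FMS1} encapsulates and that the paper imports; to make your argument self-contained you would need to carry out that fusion-law computation rather than another eigenvalue identity.
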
 
\begin{proof}
Since $\al=2\bt$, from the first formula in~\cite[Lemma~4.7]{FMS1} we deduce the expression for $\bs_{0,2}$. The expression for $\bs_{1,2}$ follows by applying $f$.
\end{proof}

\begin{lemma}\label{a3}
In $\Vo$ we have
\begin{eqnarray*}
\ba_4=\ba_{-2} &- &2Z(\lmu,\lmf)(\ba_{-1}-\ba_3)\\
&+&\frac{1}{\bt}\left [ 4Z(\lmu,\lmf) \left(\lmu-\bt \right)-\left (2\lmd-\bt\right ) \right ](\ba_{0}-\ba_2)
\end{eqnarray*}
and 
\begin{eqnarray*}
\ba_{-3}=\ba_{3}& -&2Z(\lmf,\lmu)(\ba_{2}-\ba_{-2})\\
& +&\frac{1}{\bt}\left [4Z(\lmf,\lmu)\left (\lmf-\bt\right )-\left (2\lmdf-\bt\right ) \right ](\ba_{1}-\ba_{-1}).
\end{eqnarray*}

\end{lemma}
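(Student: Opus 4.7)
\medskip

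The plan is to deduce the first identity by applying the automorphism $\rho=\tau_0\tau_1$ to the formula for $s_{0,2}$ supplied by Lemma~\ref{a0s2f}, and then to obtain the second identity by applying the swap automorphism $f$. Since $\rho$ is an $\Ro$-algebra automorphism it fixes every scalar, in particular the Miyamoto scalars $\lmu,\lmf,\lmd$ and the polynomial expression $Z(\lmu,\lmf)$. On the generating axes it acts by the shift $a_i\mapsto a_{i+2}$, as is immediate from the definition $a_{2i}=a_0^{\rho^i}$, $a_{2i+1}=a_1^{\rho^i}$.

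First I would observe the two key invariances. Writing out $s_{0,2}^{\rho}$ and $s_{0,1}^{\rho}$ using the definition~\eqref{s}, one sees
$$s_{0,2}^{\rho}=a_2a_4-\beta(a_2+a_4)=s_{2,2},\qquad s_{0,1}^{\rho}=a_2a_3-\beta(a_2+a_3)=s_{2,1},$$
and Lemma~\ref{invariances} (applied with $n=2$, resp.\ $n=1$, and indices congruent mod $n$) gives $s_{2,2}=s_{0,2}$ and $s_{2,1}=s_{0,1}$. Thus applying $\rho$ to the formula in Lemma~\ref{a0s2f} yields a \emph{second} expression for the very same vector $s_{0,2}$, namely one in which every axis index has been incremented by $2$:
\begin{eqnarray*}
s_{0,2}&=&-\tfrac{\beta}{2}(a_0+a_4)+\bt Z(\lmu,\lmf)(a_3+a_1)\\
&&-\bigl[2Z(\lmu,\lmf)(\lmu-\bt)-(\lmd-\bt)\bigr]a_2+2Z(\lmu,\lmf)s_{0,1}.
\end{eqnarray*}

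Now I would equate this with the original formula in Lemma~\ref{a0s2f}. The $2Z(\lmu,\lmf)s_{0,1}$ terms cancel, the $\bt Z(\lmu,\lmf)a_1$ terms cancel, and what remains can be solved linearly for $a_4$. After multiplying through by $2/\bt$ and collecting the coefficient of $(a_0-a_2)$, one obtains
$$-(a_0-a_2)+\frac{2}{\bt}\bigl[2Z(\lmu,\lmf)(\lmu-\bt)-(\lmd-\bt)\bigr](a_0-a_2)=\frac{1}{\bt}\bigl[4Z(\lmu,\lmf)(\lmu-\bt)-(2\lmd-\bt)\bigr](a_0-a_2),$$
giving exactly the stated expression for $a_4$. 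For the second identity I would apply $f$ to the first: by Lemma~\ref{action} and the description of $f$ from \cite{FMS1} one has $a_i^f=a_{1-i}$ for every $i\in\Z$, together with $\lmu^f=\lmf$ and $\lmd^f=\lmdf$. Sending $a_4\mapsto a_{-3}$, $a_{-2}\mapsto a_3$, $a_{-1}\mapsto a_2$, $a_3\mapsto a_{-2}$, $a_0\mapsto a_1$, $a_2\mapsto a_{-1}$, and transposing the two arguments of $Z$, gives precisely the claimed formula for $a_{-3}$.

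The only substantive point is the initial invariance observation $s_{0,2}^{\rho}=s_{0,2}$; once this is in hand, the proof is a linear rearrangement and I expect no real obstacle. The potentially error-prone step is bookkeeping the coefficient of $(a_0-a_2)$, since one must correctly absorb the stray $-(a_0-a_2)$ coming from the $\pm\tfrac{\beta}{2}a_2,\pm\tfrac{\beta}{2}a_0$ terms into the final coefficient $\frac{1}{\bt}[4Z(\lmu,\lmf)(\lmu-\bt)-(2\lmd-\bt)]$; this accounts for the asymmetric appearance of $2\lmd-\bt$ in place of $2(\lmd-\bt)$.
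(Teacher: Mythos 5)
Your proof is correct and follows essentially the same route as the paper: the paper exploits the invariance of $s_{0,2}$ under $\tau_1$ (rather than under $\rho=\tau_0\tau_1$) to obtain a second expression from Lemma~\ref{a0s2f} and solve for $a_4$, and since the right-hand side of that formula is already $\tau_0$-invariant, your use of $\rho$ produces the identical comparison. The derivation of $a_{-3}$ by applying $f$, and the bookkeeping of the coefficient of $a_0-a_2$ yielding $2\lmd-\bt$, match the paper's argument exactly.
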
 
\begin{proof}
Since $\bs_{0,2}$ is invariant under $\tau_1$, we have $\bs_{0,2}-\bs_{0,2}^{\tau_1}=0$. On the other hand,  in the expression $\bs_{0,2}-\bs_{0,2}^{\tau_1}$ obtained from the first formula of Lemma~\ref{a0s2f}, the coefficient of $\ba_4$ is $-\bt/2$, which is invertible in $\Ro$. Hence we deduce the expression for $\ba_4$. By applying the map $f$ to the expression for $\ba_4$ we get the expression for $\ba_{-3}$.
 \end{proof}
 
 \begin{lemma} \label{s1s1}
In $\Vo$ we have
\begin{eqnarray*}
\lefteqn{\bs_{0,1}\bs_{0,1}=}\\
&& +\frac{\bt^2}{4} Z(\lmf,\lmu)(\ba_{-2}+\ba_2) \\
&&+\frac{1}{2}\left [-2(2\bt-1)(\lmu^2+{\lmf}^2)-\frac{(8\bt^2-4\bt+1)}{\bt}\lmu\lmf+(16\bt^2-7\bt+1)\lmu \right .\\
&& \:\:\:\:\:\:\left . +(14\bt^2-8\bt+1)\lmf-\bt(14\bt^2-7\bt+1)\right ](\ba_{-1}+\ba_{1})\\
&&+ \left [\frac{2(2\bt-1)}{\bt}\lmu^3+\frac{(8\bt^2-4\bt+1)}{\bt^2}\lmu^2\lmf+\frac{2(2\bt-1)}{\bt}\lmu{\lmf}^2-(18\bt-6)\lmu^2 \right . \\
&& \:\:\:\:\:\: \left . -\frac{2(10\bt^2-5\bt+1)}{\bt}\lmu\lmf-2(\bt-1){\lmf}^2-\frac{(2\bt-1)}{2}\lmu\lmd-\bt\lmf\lmd \right .\\
&& \:\:\:\:\:\:\left . +\frac{(54\bt^2-17\bt+1)}{2}\lmu+(9\bt^2-6\bt+1)\lmf+\frac{\bt (5\bt-1)}{2}\lmd-\frac{\bt^2}{2}\lmdf \right .\\
&& \:\:\:\:\:\:\left . -\frac{\bt(24\bt^2-9\bt+1)}{2}\right ]\ba_0\\
&& +\left [-\frac{2(2\bt-1)}{\bt}\lmu^2-\frac{(6\bt^2-3\bt+1)}{\bt^2}\lmu\lmf-\frac{2(\bt-1)}{\bt}{\lmf}^2+\frac{(16\bt^2-7\bt+1)}{\bt}\lmu \right . \\
&& \:\:\:\:\:\:\: \left . +\frac{(10\bt^2-7\bt+1)}{\bt}\lmf-\frac{\bt}{2}\lmdf-\frac{(57\bt^2+26\bt-4)}{4}\right ]\bs_{0,1}\\
&& +\frac{\bt^2}{4}\bs_{0,3}.
\end{eqnarray*} 
 \end{lemma}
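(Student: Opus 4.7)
The plan is to compute $s_{0,1}^{\,2}$ by decomposing $s_{0,1}$ as a sum of $\ad_{a_0}$-eigenvectors and then using the fusion law $\Mab$ to control the cross products. By Lemma~\ref{invariant}, $s_{0,1}$ is $\tau_0$-invariant, so its $\bt$-eigencomponent vanishes; substituting $\al=2\bt$ into Lemma~\ref{lambdaa} yields
\[
s_{0,1} \;=\; c_1\,a_0 \;-\; \bt\,u_1 \;+\; \bt\,v_1,\qquad c_1 \;:=\; \lmu - \bt - \bt\lmu.
\]
Squaring and using $a_0 u_1 = 0$, $a_0 v_1 = 2\bt v_1$ gives
\[
s_{0,1}^{\,2} \;=\; c_1^{\,2}\, a_0 \,+\, 4\bt^2 c_1\, v_1 \,+\, \bt^2\bigl(u_1^{\,2} + v_1^{\,2}\bigr) \,-\, 2\bt^2\, u_1 v_1,
\]
in which the fusion law forces $u_1^{\,2}\in V_0^{a_0}$, $v_1^{\,2}\in V_1^{a_0}\oplus V_0^{a_0}$ and $u_1 v_1\in V_{2\bt}^{a_0}$.

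The key step is to evaluate $u_1^{\,2}$, $u_1 v_1$ and $v_1^{\,2}$ from the idempotency $a_1^{\,2} = a_1$: expanding the left-hand side in the eigendecomposition $a_1 = \lmu a_0 + u_1 + v_1 + w_1$ and matching each $\ad_{a_0}$-eigencomponent yields four linear identities relating these cross products to $w_1^{\,2}$ and to $u_1, v_1, w_1$. For instance, the $2\bt$-component gives $2 u_1 v_1 = (1 - 4\bt\lmu)v_1 - (w_1^{\,2})_{2\bt}$, while the $1$- and $0$-components similarly determine the eigenprojections of $u_1^{\,2} + v_1^{\,2}$ in terms of the corresponding projections of $w_1^{\,2}$. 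Since $w_1 = \tfrac12(a_1 - a_{-1})$ by Lemma~\ref{lambdaa}(iii), one has
\[
w_1^{\,2} \;=\; \tfrac14(a_1 + a_{-1}) \,-\, \tfrac12\,a_1 a_{-1},
\]
and Lemma~\ref{invariances} with $n = 2$ gives $a_1 a_{-1} = s_{1,2} + \bt(a_1 + a_{-1})$. Substituting the formula for $s_{1,2}$ from Lemma~\ref{a0s2f} expresses $w_1^{\,2}$ as an $\Ro$-linear combination of $a_{-1}, a_0, a_1, a_2, a_3$ and $s_{0,1}$, whose $\ad_{a_0}$-eigenprojections are then read off by a final application of Lemma~\ref{lambdaa}. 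Any intermediate occurrence of $a_4$ or $a_{-3}$ is removed using Lemma~\ref{a3}; the surviving $a_3$ in $w_1^{\,2}$ produces $s_{0,3}$ through $a_0 a_3 = s_{0,3} + \bt(a_0 + a_3)$, accounting for the $\tfrac{\bt^2}{4}\,s_{0,3}$ at the end of the stated identity. The $f$-twisted variables $\lmf, \lmdf$ enter because Lemma~\ref{a0s2f} applied to $s_{1,2}$ uses the polynomial $Z(\lmf, \lmu)$.

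The main obstacle is purely bureaucratic: the coefficients of the final identity are polynomials of total degree up to three in $\lmu, \lmd, \lmf, \lmdf$ with rational functions of $\bt$, and tracking them correctly through the cascade of substitutions above is tedious and, in practice, best verified by computer algebra. Conceptually the argument is a single application of the fusion-law grading, the Miyamoto invariance in Lemma~\ref{invariant}, the idempotency $a_1^{\,2} = a_1$, and the previously established identities of Lemmas~\ref{a0s2f} and~\ref{a3}; the shape of the formula is thereby forced, and only the explicit numerical coefficients require final verification.
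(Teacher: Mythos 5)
Your argument is correct, but it reaches the formula by a genuinely different route from the paper. The paper's proof is a one-line appeal to Lemma~\ref{psi} with $i=j=1$: the fusion rules $0\star 0=\{0\}$ and $0\star 2\bt=\{2\bt\}$ give $a_0(u_1u_1+u_1v_1)=2\bt\, u_1v_1$, and after substituting the explicit expressions of Lemma~\ref{lambdaa} for $u_1$ and $v_1$, the product $s_{0,1}s_{0,1}$ occurs in this identity with a unit coefficient, so it can be solved for once the remaining products $a_ia_j$ and $a_0s_{0,1}$, $a_{\pm 1}s_{0,1}$ are supplied by Lemma~\ref{a0s2f}, Lemma~\ref{a3} and \cite[Lemma~4.3]{FMS1}. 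You instead square the eigendecomposition $s_{0,1}=c_1a_0-\bt u_1+\bt v_1$ directly (your derivation of this decomposition and of the identity $s_{0,1}^{\,2}=c_1^2a_0+4\bt^2c_1v_1+\bt^2(u_1^2+v_1^2)-2\bt^2u_1v_1$ is correct), and you recover the unknown products $u_1^2+v_1^2$ and $u_1v_1$ from the idempotency $a_1^2=a_1$ together with the explicit computation of $w_1^2=\frac{1}{4}(a_1-a_{-1})^2$, which needs only $a_1a_{-1}=s_{1,2}+\bt(a_1+a_{-1})$ and hence again Lemma~\ref{a0s2f}. Your route has the mild advantage of not invoking the products $a_0s_{0,1}$, $a_{\pm1}s_{0,1}$ from \cite[Lemma~4.3]{FMS1} as an external input, at the cost of an extra layer of $\ad_{a_0}$-eigenprojections of $w_1^2$; the paper's route is shorter to state because Lemma~\ref{psi} packages the fusion-law step once and for all for arbitrary $(i,j)$, and is then reused for $s_{0,3}s_{0,3}$ and $s_{0,1}s_{0,3}$ in the proof of Proposition~\ref{span}. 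In either approach the explicit coefficients can only be certified by carrying out the (long) substitutions, which the paper itself delegates to computer algebra in Remark~\ref{struct}, so your deferral of that step is consistent with the paper's own standard of proof.
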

 \begin{proof}
By Lemma~\ref{a0s2f}, Lemma~\ref{a3}, and Lemma~4.3 in~\cite{FMS1}, we may compute the expression on the right hand side of the formula in Lemma~\ref{psi}, with $i=j=1$, and the   result follows.
  \end{proof}

\begin{lemma} \label{s3f}
In $\Vo$ we have
\begin{eqnarray*}
\lefteqn{\bs_{1,3}=\bs_{0,3}+ \bt Z(\lmf, \lmu)\ba_{-2}-\bt Z(\lmu,\lmf)\ba_3 } \\
&&+ \frac{1}{\bt^3}\left [-4\bt(2\bt-1)(\lmu^2+{\lmf}^2)-2(8\bt^2-4\bt+1)\lmu\lmf+2\bt(15\bt^2-7\bt+1)\lmu \right .\\
&&\left . \:\:\:\:\:\:\:+\bt(26\bt^2-15\bt+2)\lmf-\bt^2(24\bt^2-13\bt+2)\right ]\ba_{-1} \\
&&+ \frac{1}{\bt^4}\left [8\bt(2\bt-1)\lmu^3+4(8\bt^2-4\bt+1)\lmu^2\lmf+8\bt(2\bt-1)\lmu{\lmf}^2\right .\\
&&\:\:\:\:\:\:\left . -4\bt^2(15\bt-5)\lmu^2-2\bt(32\bt^2-16\bt+3)\lmu\lmf+4\bt^2{\lmf}^2-2\bt^2(2\bt+1)\lmu\lmd \right .\\
&& \left . \:\:\:\:\:\:-4\bt^3\lmf\lmd+2\bt^3(40\bt-9)\lmu+2\bt^2(2\bt^2-5\bt+1)\lmf+2\bt^3(5\bt-1)\lmd \right .\\
&& \left . \:\:\:\:\:\:-2\bt^4\lmdf-4\bt^4(5\bt-1)\right ]\ba_0\\
&&+ \frac{1}{\bt^4}\left [ -8\bt(2\bt-1)\lmu^2\lmf-4(8\bt^2-2\bt+1)\lmu{\lmf}^2-8\bt(2\bt-1){\lmf}^3-4\bt^2\lmu^2\right .\\
&&\left . \:\:\:\:\:\:+2\bt(32\bt^2-16\bt+3)\lmu\lmf+4\bt^2(16\bt-5){\lmf}^2+4\bt^3\lmu\lmdf \right . \\
&& \left. \:\:\:\:\:\:\:+2\bt^2(2\bt-1)\lmf\lmdf-2\bt^2(2\bt^2+5\bt+1)\lmu-2\bt^3(40\bt-9)\lmf+2\bt^4\lmd \right .\\
&& \left . \:\:\:\:\:\:-2\bt^3(5\bt-1)\lmdf+4\bt^4(5\bt-1)\right ]\ba_1
\end{eqnarray*} 
\begin{eqnarray*}
&& +\frac{1}{\bt^3}\left [ 4\bt(2\bt-1)\lmu^2+2(8\bt^2-4\bt+1)\lmu\lmf+\bt(8\bt-4){\lmf}^2\right .\\
&& \left . \:\:\:\:\:-\bt(26\bt^2-15\bt+2)\lmu-2\bt(15\bt^2-7\bt+1)\lmf+\bt^2(24\bt^2-13\bt+2)\right ]\ba_2\\
&& +\frac{1}{\bt^2}\left [-8(\lmu^2-{\lmf}^2)+24\bt(\lmu-\lmf)+2\bt(\lmd-\lmdf)\right ] \bs_{0,1}.
\end{eqnarray*} 
Similarly, $\bs_{2,3}$ belongs to the linear span of the elements $\ba_{-2}$, $\ba_{-1}$, $\ba_0$, $\ba_1$, $\ba_2$, $\ba_3$, $\bs_{0,1}$, and $\bs_{0,3}$.
 \end{lemma}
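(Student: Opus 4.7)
The plan is to exploit the $f$-invariance of $s_{0,1}^2$. Since $f\in K_1=T$, Lemma~\ref{action}(1) gives $s_{0,1}^f=s_{0,1}$, hence $(s_{0,1}^2)^f=s_{0,1}^2$. In the formula of Lemma~\ref{s1s1}, the coefficient of $s_{0,3}$ is the invertible element $\bt^2/4$, which is the key observation that will allow us to solve for the $f$-image of $s_{0,3}$.

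For $s_{1,3}$: apply $f$ to Lemma~\ref{s1s1}. Under $f$ one has $a_i\mapsto a_{1-i}$, $\lmu\leftrightarrow\lmf$, $\lmd\leftrightarrow\lmdf$, $s_{0,1}$ is fixed, and $s_{0,3}\mapsto s_{1,3}$ by Lemma~\ref{action}(3). Equating the two expressions obtained for $s_{0,1}^2$ and cancelling common terms yields
$$\frac{\bt^2}{4}\bigl(s_{1,3}-s_{0,3}\bigr)=E-E^f,$$
where $E$ denotes the sum of the terms of the right-hand side of Lemma~\ref{s1s1} other than the $s_{0,3}$-term. Since $E$ lies in the span of $a_{-2},a_{-1},a_0,a_1,a_2$ and $s_{0,1}$, while $E^f$ lies in the span of $a_3,a_2,a_1,a_0,a_{-1}$ and $s_{0,1}$, the difference $E-E^f$ is a linear combination of $a_{-2},a_{-1},a_0,a_1,a_2,a_3$ and $s_{0,1}$. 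Solving for $s_{1,3}$ and regrouping the rational-function coefficients via the polynomial $Z$ produces the stated formula.

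For $s_{2,3}$: by Lemma~\ref{invariances}, $s_{1,3}^{\tau_0}=s_{-1,3}=s_{2,3}$, so applying $\tau_0$ to the formula for $s_{1,3}$ just obtained yields $s_{2,3}$. Since $\tau_0$ is an automorphism fixing $a_0$, the scalars $\lmu,\lmf,\lmd,\lmdf$ are fixed, and so are $s_{0,1}$ and $s_{0,3}$ (the latter because $s_{-3,3}=s_{0,3}$ by Lemma~\ref{invariances}), while on axes $\tau_0$ acts by $a_i\mapsto a_{-i}$. The resulting expression involves $a_{-3}$, which the second formula of Lemma~\ref{a3} rewrites in terms of $a_{-2},a_{-1},a_1,a_2,a_3$, establishing the claimed spanning property for $s_{2,3}$.

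The main obstacle is purely computational: the strategy is dictated by symmetry, but the precise form of the coefficients emerges only after a lengthy manipulation, typically carried out with the help of a computer algebra system.
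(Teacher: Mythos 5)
Your proposal is correct and follows essentially the same route as the paper: the authors likewise exploit $s_{0,1}s_{0,1}-(s_{0,1}s_{0,1})^{f}=0$ together with Lemma~\ref{s1s1} (where the invertible coefficient $\bt^2/4$ of $s_{0,3}$ lets one solve for $s_{1,3}$), and then obtain $s_{2,3}$ by applying $\tau_0$ and eliminating $a_{-3}$ via Lemma~\ref{a3}. Your bookkeeping of the $f$- and $\tau_0$-actions on the axes, the $\lambda$'s and the $s_{r,n}$'s is accurate, so nothing is missing beyond the (acknowledged) machine computation of the explicit coefficients.
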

 \begin{proof}
 Since, by Lemma~\ref{action},  $\bs_{0,1}$ is invariant under $f$, we have $\bs_{0,1}\bs_{0,1}-(\bs_{0,1}\bs_{0,1})^{f}=0$. Comparing the expressions for $\bs_{0,1}\bs_{0,1}$ and $(\bs_{0,1}\bs_{0,1})^{f}$ obtained from Lemma~\ref{s1s1}, we deduce the expression for $\bs_{1,3}$. By applying the map $\tau_0$ to the expression for $\bs_{1,3}$ we get the expression for $\bs_{2,3}$ and the last assertion follows from Lemma~\ref{a3}.
  \end{proof}

As a consequence of the {\it resurrection principle}~\cite[Lemma~1.7]{IPSS10}, we can now prove the following result.  We use double angular brackets to denote algebra generation and singular angular brackets for linear span.
\begin{proposition}\label{span}
$\Vo=\langle \ba_{-2}, \ba_{-1}, \ba_0, \ba_1, \ba_2, \ba_3, \bs_{0,1}, \bs_{0,3}\rangle$.
\end{proposition}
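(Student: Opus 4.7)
Plan. Let $W$ denote the $\Ro$-linear span of $\{a_{-2}, a_{-1}, a_0, a_1, a_2, a_3, s_{0,1}, s_{0,3}\}$. Since $\Vo$ is generated as an $\Ro$-algebra by $\{a_0, a_1\}\subseteq W$, it suffices to show that $W$ is a subalgebra of $\Vo$.

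The first preparatory step is to absorb into $W$ the elements already forced there by the preceding lemmas: $s_{0,2}$ and $s_{1,2}$ from Lemma~\ref{a0s2f}, $a_4$ and $a_{-3}$ from Lemma~\ref{a3}, $s_{1,3}$ and $s_{2,3}$ from Lemma~\ref{s3f}, and the square $s_{0,1}\cdot s_{0,1}$ from Lemma~\ref{s1s1}. I would then prove that every axis $a_n$ with $n\in\Z$ belongs to $W$, by induction on $|n|$. Each Miyamoto involution $\tau_k$ is an $\Ro$-linear automorphism of $\Vo$ sending $a_i\mapsto a_{2k-i}$ and preserving all $\Ro$-coefficients, so applying $\tau_k$ to the two formulas in Lemma~\ref{a3} yields analogous identities with shifted indices. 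For example, $\tau_1$ applied to the expression for $a_{-3}$ writes $a_5$ as an $\Ro$-combination of $a_{-1}, a_0, a_1, a_3, a_4 \in W$; $\tau_0$ applied to the $a_4$-formula gives $a_{-4}\in W$; iterating $\tau_k$ on both formulas places every $a_n$ in $W$.

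Next, I would verify that $W$ is closed under multiplication by $a_0$; closure under $a_1$-multiplication then follows by applying the symmetry $f$. For the products $a_0\cdot a_i$ with $i\in\{-2,\ldots,3\}$, the defining identity $a_0 a_i = s_{r,n} + \beta(a_0+a_i)$, with $r=\min(0,i)$ and $n=|i|$, combined with Lemma~\ref{invariances}, identifies each $s_{r,n}$ with one of $s_{0,1}, s_{0,2}, s_{0,3}$, all in $W$. For the products $a_0\cdot s_{0,k}$ with $k\in\{1,3\}$, I would use the $\ad_{a_0}$-eigen-decomposition of $s_{0,k}$: by Lemma~\ref{invariant}, $s_{0,k}$ is $\tau_0$-invariant and so has no $\beta$-component; by Lemma~\ref{lambdaa} its remaining $1$-, $0$-, and $\alpha$-components are explicit $\Ro$-combinations of $a_0, a_k, a_{-k}, s_{0,k}$; hence $a_0\cdot s_{0,k}$ is an $\Ro$-combination of elements of $W$.

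The main obstacle is upgrading closure under $a_0$- and $a_1$-multiplication to full subalgebra closure $W\cdot W\subseteq W$, i.e.\ bringing $s_{0,1}\cdot s_{0,3}$ and $s_{0,3}\cdot s_{0,3}$ into $W$. This is exactly the role of the resurrection principle \cite[Lemma~1.7]{IPSS10}. The strategy is to apply Lemma~\ref{psi}, which expresses each $s_{0,i}\cdot s_{0,j}$ as $\alpha(a_0 P_{ij}-\alpha Q_{ij})$ with $P_{ij}, Q_{ij}$ built from the $\ad_{a_0}$-eigencomponents $u_i, v_j$; by Lemma~\ref{lambdaa} these components are themselves $\Ro$-combinations of $a_0, a_{\pm i}, s_{0,i}$, and the $f$-invariance of $s_{0,1}$ (Lemma~\ref{action}) combined with the known expansion of $s_{0,1}^2$ (Lemma~\ref{s1s1}) supplies the extra relations needed to resurrect $s_{0,1}\cdot s_{0,3}$ and $s_{0,3}^2$ as explicit elements of $W$. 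Once $W\cdot W\subseteq W$ is established, $W$ is a subalgebra of $\Vo$ containing $\{a_0, a_1\}$, so $W=\Vo$ by the universality of $\Vo$.
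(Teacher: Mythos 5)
Your proposal is correct and follows essentially the same route as the paper's proof: absorb $a_4$, $a_{-3}$, $s_{1,3}$, $s_{2,3}$ via Lemmas~\ref{a3} and~\ref{s3f} so that the span is invariant under $\tau_0$, $\tau_1$, $f$ (hence contains every $a_i$), reduce closure under multiplication to $\ad_{a_0}$-invariance together with the three products $s_{0,1}s_{0,1}$, $s_{0,1}s_{0,3}$, $s_{0,3}s_{0,3}$, and obtain those from Lemmas~\ref{lambdaa}, \ref{psi} and~\ref{s1s1}. The only cosmetic difference is that you derive $a_0s_{0,k}\in W$ directly from the eigenvector decomposition of $s_{0,k}$, where the paper simply cites \cite[Lemma~4.3]{FMS1}.
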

\begin{proof}
Set $U:=\langle \ba_{-2}, \ba_{-1}, \ba_0, \ba_1, \ba_2, \ba_3, \bs_{0,1}, \bs_{0,3}\rangle$. By Lemma~\ref{a3}, $\ba_4, \ba_{-3}\in U$, and  by Lemma~\ref{s3f} also $\bs_{1,3}$ and $\bs_{2,3}$ belong to $U$. It follows that $U$ is invariant under the maps $\tau_0$, $\tau_1$,  and $f$. Hence, $a_i$ belongs to $U$, for every $i\in \Z$. Now we show that $U$ is closed under the algebra product. Since it is invariant under the maps $\tau_0$, $\tau_1$,  and $f$, it is enough to show that it is invariant under the action of $\ad_{\ba_0}$ and it contains $\bs_{0,1}\bs_{0,1}$, $\bs_{0,3}\bs_{0,3}$, and $\bs_{0,1}\bs_{0,3}$. The products $\ba_0\ba_i$, for $i\in \{-2,-1,0,1,2,3\}$ belong to $U$ by the definition of $U$ and by Lemma~\ref{a0s2f}.  By~\cite[Lemma~4.3]{FMS1}, $U$ contains  $\ba_0\bs_{0,1}$ and $\ba_0\bs_{0,3}$. The product $\bs_{0,1}\bs_{0,1}$ belongs to $U$ by Lemma~\ref{s1s1} and similarly, by Lemma~\ref{lambdaa} and Lemma~\ref{psi}, the products $\bs_{0,3}\bs_{0,3}$, and $\bs_{0,1}\bs_{0,3}$ belong to $U$.
Hence $U$ is a subalgebra of $\Vo$ and, since it contains the generators $\ba_0$ and $\ba_1$, we get $U=\Vo$. \end{proof}
 
 \begin{remark}\label{struct}
Note that the above proof gives a constructive way to compute the structure constants of the algebra $\Vo$ relative to the generating set $B$. This has been done with the use of GAP~\cite{GAP}. The explicit expressions however are far too long to be written here.
\end{remark}

\begin{cor}\label{Rin}
 $\Ro$ is generated as a $\hat D$-algebra by $\lambda_1$, $\lambda_2$,  $\lambda_1^f$, and $\lambda_2^f$.
\end{cor}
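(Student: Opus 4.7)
Let $R^{\ast} := \hat D[\lmu, \lmd, \lmf, \lmdf]$ denote the $\hat D$-subalgebra of $\Ro$ generated by the four elements in the statement; the claim is that $\Ro = R^{\ast}$. My plan is to bootstrap from Proposition~\ref{span}: not only is $\Vo$ linearly spanned as an $\Ro$-module by the eight vectors $B = \{a_{-2}, a_{-1}, a_0, a_1, a_2, a_3, s_{0,1}, s_{0,3}\}$, but all of its structure constants relative to $B$ in fact lie in $R^{\ast}$, so that $\Vo$ already carries the structure of a $2$-generated primitive axial algebra of Monster type $(2\bt, \bt)$ over the smaller ring $R^{\ast}$. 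The conclusion will then follow from the universal property of $\Vo$ recorded in \cite[Theorem~3.6 and Corollary~3.8]{FMS1}.

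For the first step I would inspect each formula appearing in the proof of Proposition~\ref{span}---those of Lemmas~\ref{a0s2f}, \ref{a3}, \ref{s1s1}, \ref{s3f}, together with \cite[Lemma~4.3]{FMS1}---and read off that every coefficient is a polynomial over $\hat D$ in the four variables $\lmu, \lmd, \lmf, \lmdf$, and therefore lies in $R^{\ast}$. Since $B$ is invariant under $\tau_0$, $\tau_1$ and $f$ with $R^{\ast}$-valued coefficients, the same closure argument used to prove Proposition~\ref{span} shows, by induction on the length of a word in the free magma on $\{a_0, a_1\}$, that every product of elements of $B$ expands in $B$ with $R^{\ast}$-valued coefficients. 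Together with the eigendecompositions of Lemma~\ref{lambdaa}, this makes the $R^{\ast}$-span $V^{\ast}$ of $B$ inside $\Vo$ into an $R^{\ast}$-subalgebra satisfying (Ax1)--(Ax3) over $R^{\ast}$. For the second step, the universal property of $\Vo$ applied to $V^{\ast}$ (as a $2$-generated primitive axial algebra of Monster type $(2\bt, \bt)$ over $R^{\ast}$ with the prescribed generating axes $a_0, a_1$) produces a $\hat D$-algebra homomorphism $\phi\colon \Ro \to R^{\ast}$. Composing with the inclusion $\iota\colon R^{\ast} \hookrightarrow \Ro$ yields an endomorphism of $\Ro$ that, by the uniqueness clause in the same universal property, coincides with $\mathrm{id}_{\Ro}$; this forces $R^{\ast} = \Ro$.

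I expect the genuine obstacle to lie in the first step: while the structure constants for products of distinct basis elements of $B$ are visibly in $R^{\ast}$, one must also ensure that the $\ad_{a_0}$-eigendecomposition of the basis elements $a_3$ and $s_{0,3}$ has its pieces inside $V^{\ast}$, which reduces to showing that $\lmt = \lambda_{a_0}(a_3)$ is expressible in $R^{\ast}$. The expected way to secure this is to compare the formula $a_1 a_3 = s_{1,3} + \bt(a_1 + a_3)$, with $s_{1,3}$ supplied by Lemma~\ref{s3f}, against the expression for $a_1 a_3$ obtained from the $\ad_{a_1}$-eigendecomposition of $a_3$ combined with the fusion law of Table~\ref{Ising}: the resulting identity is linear in $\lmt$ with an invertible leading coefficient, so solving it realises $\lmt$ as an element of $R^{\ast}$; the analogous computation on the other side (via $f$) handles $\lambda_{a_1}$-values and closes the verification needed for the universality argument to apply.
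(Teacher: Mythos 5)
Your overall architecture matches the paper's in spirit: everything reduces to showing that $\lambda_{a_0}(v)$ lies in $R^{\ast}=\hat D[\lmu,\lmf,\lmd,\lmdf]$ for each $v$ in the spanning set $B$ of Proposition~\ref{span}. All of these values except $\lambda_{a_0}(a_3)=\lmt$ (which also enters through $\lambda_{a_0}(s_{0,3})=\lmt-\bt-\bt\lmt$) are immediate from the definitions and the $\tau_0$-invariance of $\lambda_{a_0}$, and you correctly isolate $\lmt$ as the one genuine obstacle. (Your detour through the universal property is heavier than the paper's direct reduction via $\lambda_{a_1}(v)=(\lambda_{a_0}(v^f))^f$, but that part could be made to work.)

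The gap is in your mechanism for computing $\lmt$. First, a slip: by definition~(\ref{s}) one has $s_{1,3}=a_1a_4-\bt(a_1+a_4)$, so the relevant identity is $a_1a_3=s_{1,2}+\bt(a_1+a_3)$, not $s_{1,3}+\bt(a_1+a_3)$. More seriously, the $\ad_{a_1}$-eigendecomposition of $a_3$ does not involve $\lmt$ at all: since $a_3=a_{-1}^{\tau_1}$ and $\tau_1$ fixes $a_1$ and acts trivially on $\Ro$, one gets $\lambda_{a_1}(a_3)=\lambda_{a_1}(a_{-1})=\lmdf$, and the analogue of Lemma~\ref{lambdaa} expresses the eigenvector pieces of $a_3$ relative to $a_1$ through $a_3$, $a_{-1}$ and $s_{1,2}$, all with coefficients already in $R^{\ast}$. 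Hence the comparison you propose produces an identity that is never \emph{linear in $\lmt$ with an invertible leading coefficient}; the only eigendecomposition that does contain $\lmt$ is that of $a_3$ relative to $a_0$, and comparing it with $a_0a_3=s_{0,3}+\bt(a_0+a_3)$ is a tautology that cannot determine $\lmt$. What is actually required is a genuinely new relation extracted from the fusion law: the paper builds a $0$-eigenvector $z$ for $\ad_{a_0}$ out of $u_1u_1-v_1v_1$ (corrected by multiples of $a_0$ and of $u_1$), notes that $\M(2\bt,\bt)$ forces $\lambda_{a_0}(zs_{0,1})=0$ because $s_{0,1}\in \Vo_+^{a_0}$, and then observes that in the explicit expansion of $zs_{0,1}$ over $B$ the vector $a_3$ occurs with the invertible coefficient $-\frac{\bt^3}{4}$; applying $\lambda_{a_0}$ to that expansion yields the linear equation that solves for $\lmt$ as a polynomial in $\lmu,\lmf,\lmd,\lmdf$. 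Without a step of this kind your argument does not close.
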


\begin{proof}
By Proposition~\ref{span}, $\Vo$ is generated as  $\Ro$-module by the set $B:=\{\ba_{-2}, \ba_{-1},$ $ \ba_0, \ba_1, \ba_2, \ba_3, \bs_{0,1}, \bs_{0,3}\}$. Since $\lambda_{\ba_1}(v)=(\lambda_{\ba_0}(\bv^f))^f$, $\lambda_{\ba_0}$ is a linear function, and $\Ro=\Ro^f$,  we just need to show that $\lambda_{\ba_0}(\bv) \in \hat D[\lmu, \lmf, \lmd, \lmdf]$ for every $\bv\in B$. 
By definition we have 
$$\lambda_{\ba_0}(\ba_0)=1, \:\:\lambda_{\ba_0}(\ba_1)=\lambda_1,\:\:\lambda_{\ba_0}(\ba_2)=\lambda_2, \mbox{ and }\:\:\lambda_{\ba_0}(\ba_3)=\lambda_3.$$ Since $\tau_0$  fixes $\ba_0$ and is an $\Ro$-automorphism of $\Vo$, we get
$$\lambda_{\ba_0}(\ba_{-1})=\lambda_{\ba_0}((\ba_{1})^{\tau_0})=\lambda_1,$$
$$\lambda_{\ba_0}(\ba_{-2})=\lambda_{\ba_0}((\ba_{2})^{\tau_0})=\lambda_2,$$
 and 
$$
\lambda_{\ba_0}(\bs_{0,1})=\lambda_{\ba_0}(\ba_0\ba_1-\beta \ba_0-\beta \ba_1)=\lambda_1-\beta-\beta \lambda_1,
$$
and
$$
\lambda_{\ba_0}(\bs_{0,3})=\lambda_{\ba_0}(\ba_0\ba_3-\beta \ba_0-\beta \ba_3)=\lambda_3-\beta-\beta \lambda_3.
$$
We conclude the proof by showing that $\lambda_3\in \hat D[\lmu, \lmf, \lmd, \lmdf]$. Set 
$$\mathbf{\phi}:= \bu_1\bu_1-\bv_1\bv_1- \lambda_{\ba_0}(\bu_1\bu_1-\bv_1\bv_1)\ba_0
$$
and 
$$
\mathbf{z}:=\mathbf{\phi}-2(2\bt-\lmu)\bu_1.
$$
Then, by the fusion law, $\phi$ is a $0$-eigenvector for $\ad_{\ba_0}$ and so $\mathbf z$ is a $0$-eigenvector for $\ad_{\ba_0}$ as well. Since $\bs_{0,1}$ is $\tau_0$-invariant, it lies in $\Vo^{\ba_0}_+$ and the fusion law implies that the product $\mathbf{z}\bs_{0,1}$ is a sum of a $0$- and an $\al$-eigenvector for $\ad_{\ba_0}$. In particular, $\lambda_{\ba_0}(\mathbf{z}\bs_{0,1})=0$. By Remark~\ref{struct} we can compute explicitly  the product $\mathbf{z}\bs_{0,1}$:

\begin{eqnarray*}
\lefteqn{ \mathbf{z}\bs_{0,1}=-\frac{\bt^3}{4}\ba_3}\\
 && +\frac{\bt}{4}\left [2\bt\lmu-\lmf-\bt(\bt-1)\right ]\ba_{-2}\\
&& +\left [-\bt^2\lmu^2-\frac{(2\bt^2+\bt-1)}{2}\lmu\lmf-\frac{(2\bt^2-4\bt+1)}{2\bt}{\lmf}^2+\frac{(4\bt^3-\bt^2-\bt)}{2}\lmu \right .\\
&&\left . +(2\bt^2-4\bt+1)\lmf+\frac{\bt^3}{4}\lmd-\frac{\bt^2}{4}\lmdf+\frac{\bt(4\bt-1}{2}\right ]\ba_{-1}\\
&& +\left [
2(2\bt-1)\lm^3+\frac{(2\bt-1)^2}{\bt}\lmu^2\lmf-(10\bt^2-8\bt+1)\lmu^2+(-2\bt^2+3\bt-1)\lmu\lmf \right .\\
&&\left .-\frac{\bt(2\bt-1)}{2}\lmu\lmd+\bt(2\bt-1)^2\lmu+\frac{\bt(2\bt-1)}{2}\lmf+\frac{\bt^2(\bt-1)}{2}\lmd-\frac{\bt^2(4\bt-1)}{2}\right ]\ba_{0}\\
&& +\left [
-\bt^2)\lmu^2-\frac{(\bt+3)(2\bt-1)}{2}\lmu\lmf-\frac{(6\bt^2-4\bt+1)}{2\bt}{\lmf}^2+\frac{\bt(4\bt^2+3\bt-3)}{2}\lmu \right .\\
&&\left .+(8\bt^2-5\bt+1)\lmf+\frac{bt^3}{4}\lmd+\frac{\bt^2}{4}\lmdf-\frac{\bt(17\bt^2-12\bt+2)}{4}\right ]\ba_{1}\\
&& +\left [
\frac{\bt(3\bt-1)}{2}\lmu+\frac{\bt(4\bt-1)}{4}\lmf-\frac{3\bt^2(3\bt-1)}{4}\right ]\ba_{2}\\
&& +\left [
-2\bt\lmu^2-(2\bt-1)\lmu\lmf+\bt(4\bt+1)\lmu+(2\bt-1)\lmf+\frac{\bt^2}{2}\lmd-\frac{\bt(9\bt+2)}{2}\right ]\bs_{0,1}.
\end{eqnarray*}

Since $\lambda_{a_0}(\mathbf{z}\bs_{0,1})=0$, taking the image under $\lambda_{\ba_0}$ of both sides, we get
\begin{eqnarray*}
\lambda_3&=&\frac{8(\bt-1)}{\bt^3}\lmu^3-\frac{4(2\bt^2+\bt-1)}{\bt^4}\lmu^2\lmf-\frac{4(2\bt-1)^2}{\bt^4}\lmu{\lmf}^2\\
&&-\frac{4(4\bt^2-7\bt+1)}{\bt^3}\lmu^2+\frac{16(2\bt-1)}{\bt^2}\lmu{\lmf}+\frac{6}{\bt}\lmu\lmd+\frac{2(2\bt-1)}{\bt^2}\lmf\lmd\\
&&+\frac{(\bt^2-22\bt+4)}{\bt^2}\lmu-\frac{2(2\bt-1)}{\bt^2}\lmf-\frac{2(5\bt+1)}{\bt}\lmd+\frac{2(5\bt-1)}{\bt}.
\end{eqnarray*}
\end{proof}

 We conclude this section giving some relations in $\Vo$ and $\Ro$ which will be useful in the sequel for the classification of the algebras.
 Set
 $$
 \mathbf{d}_1:=\bs_{2,3}^f-\bs_{2,3}, \:\:
\mathbf{d}_2:={ \mathbf{d}_1}^{\tau_1},\:\: \mbox{ and, for } i\in \{1,2\},\:\:
\mathbf{D}_i:={\mathbf{d}_i}^{\tau_0}-\mathbf{d}_i;
$$
 $$
\mathbf{e}:= \bu_1^{\tau_1} \bv_3^{\tau_1} \:\:\mbox{ and }\:  \mathbf{E}:= \ba_2 \mathbf{e}-2\bt  \mathbf{e}.
$$

 \begin{lemma}\label{u1u2}
The following identities hold in $\Vo$, for $i\in \{1,2\}$:
\begin{enumerate}
 \item 
 $\mathbf{d}_ i=0, \:\:, \mathbf{D}_i=0, \:\: \mathbf{E}=0$;
 \item there exists an element $t(\lmu, \lmf, \lmd, \lmdf) \in \Ro$ such that 
\begin{equation*}
t_1(\lmu, \lmf, \lmd, \lmdf)\ba_0+
 \frac{2}{\bt}(\lmu-\lmf)\left [\bt\lmu+(\bt-1)(\lmf-\bt)\right ](\ba_{-1}+\ba_1+\frac{2}{\bt}\bs_{0,1})=0.
\end{equation*} 
\end{enumerate}
\end{lemma}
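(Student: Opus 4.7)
The plan is to derive part (1) from the fusion law together with the automorphism structure of $\Vo$, and then, by expanding each of the vanishing elements $d_1,d_2,D_1,D_2,E$ explicitly in the spanning set $B:=\{a_{-2},a_{-1},a_0,a_1,a_2,a_3,s_{0,1},s_{0,3}\}$ using the formulas already in hand, to extract the scalar relation of part (2).

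For part (1): by Lemma~\ref{action}(3) the stabiliser of $s_{2,3}$ in the action of $T/K_3$ on $\{s_{0,3},s_{1,3},s_{2,3}\}$ is generated by $fK_3$, so $s_{2,3}^f=s_{2,3}$, i.e.\ $d_1=0$. Since $\tau_0,\tau_1$ are automorphisms of $\Vo$, the vanishing of $d_1$ immediately propagates to $d_2=d_1^{\tau_1}=0$ and $D_i=d_i^{\tau_0}-d_i=0$ for $i\in\{1,2\}$. For $E$, the fusion law gives $0\star 2\bt=\{2\bt\}$, so $u_1v_3\in V^{a_0}_{2\bt}$, i.e.\ $a_0(u_1v_3)=2\bt(u_1v_3)$; applying $\tau_1$ (which sends $a_0$ to $a_2$) yields $a_2e=2\bt e$, whence $E=0$.

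For part (2): the key point is that although $d_1,d_2,D_1,D_2,E$ vanish in $\Vo$, their explicit expansions in $B$ are nontrivial polynomial identities with coefficients in $\hat D[\lmu,\lmf,\lmd,\lmdf]$. A formula for $s_{2,3}$ in $B$ is obtained by applying $\tau_0$ to the formula for $s_{1,3}$ in Lemma~\ref{s3f} (since $s_{1,3}^{\tau_0}=s_{2,3}$ by Lemma~\ref{action}(3)) and reducing $a_{-3}$ via Lemma~\ref{a3}; a formula for $s_{2,3}^f$ is then produced by applying $f$ term-by-term to that result (using $a_i^f=a_{1-i}$, $s_{0,1}^f=s_{0,1}$, $s_{0,3}^f=s_{1,3}$) and again reducing via Lemma~\ref{a3}; and $e=u_1^{\tau_1}v_3^{\tau_1}$ is computed by expanding $u_1,v_3$ with Lemma~\ref{lambdaa}, applying $\tau_1$, and multiplying using the multiplication table of Remark~\ref{struct}. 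Equating each expansion to zero gives a family of $\Ro$-linear relations on $B$; a suitable linear combination of them---chosen so as to cancel all occurrences of $a_{-2},a_2,a_3,s_{0,3}$---produces a relation of the required form, and $t_1(\lmu,\lmf,\lmd,\lmdf)$ is by definition the coefficient of $a_0$ that then appears.

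The main obstacle is the size and degree of the intermediate expressions: they are polynomials of degree up to four in $\lmu,\lmf,\lmd,\lmdf$ (with negative powers of $\bt$ from Lemma~\ref{s3f}), and correctly isolating the combination $a_{-1}+a_1+\frac{2}{\bt}s_{0,1}$---which by Lemma~\ref{lambdaa}(2) equals $4v_1+\frac{2(\lmu-\bt)}{\bt}a_0$, so that the asserted relation is really a relation between the $1$- and $2\bt$-eigenspaces of $\ad_{a_0}$---requires combining several of the above identities with carefully chosen coefficients. This bookkeeping is of exactly the kind handled by the GAP computation mentioned in Remark~\ref{struct}.
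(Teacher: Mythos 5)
Your part (1) is correct and is exactly the paper's argument: $d_1=0$ because $f$ stabilises $s_{2,3}$ by Lemma~\ref{action}(3), the vanishing of $d_2$ and of the $D_i$ follows by applying the automorphisms $\tau_1$ and $\tau_0$, and $E=0$ because $u_1v_3$ lies in $V^{a_0}_{2\bt}$ by the fusion entry $0\star 2\bt=\{2\bt\}$ while $\tau_1$ carries $a_0$ to $a_2$.

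Part (2) is where there is a genuine gap. The relation is not obtained in the paper as a linear combination of the expansions of $d_1,d_2,D_1,D_2,E$; it comes from a further, independent instance of the fusion law. Since $u_1$ and $u_2$ are both $0$-eigenvectors of $\ad_{a_0}$ and $0\star 0=\{0\}$, the product $u_1u_2$ is again a $0$-eigenvector, hence $a_0(u_1u_2)=0$; expanding $a_0(u_1u_2)$ in the spanning set via the structure constants of Remark~\ref{struct} yields precisely the left-hand side of the displayed equation (this is why the lemma carries the name $u_1u_2$). Your derivation instead rests on the unsupported assertion that the part-(2) relation lies in the $\Ro$-span of the five relations of part (1). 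You give no argument for this, and there is no reason to expect it: those five relations encode the $T$-action on the $s_{r,3}$ and the fusion entry $0\star 2\bt$, whereas the part-(2) relation encodes the entry $0\star 0$ applied to $u_1\cdot u_2$, a genuinely new constraint that the lemma is recording as an additional item. (Your remark that $a_{-1}+a_1+\frac{2}{\bt}s_{0,1}=4v_1+\frac{2(\lmu-\bt)}{\bt}a_0$ is correct, but it does not substitute for identifying the source of the relation.) To repair the proof, replace the ``suitable linear combination'' step by the observation that $a_0(u_1u_2)=0$ and the explicit computation of that product.
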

\begin{proof}
Identities involving the $\mathbf{d}_i$'s and $\mathbf{D}_i$'s follow from Lemma~\ref{action}.
By the fusion law, the product $\bu_1\bu_2$ is a $0$-eigenvector for $\ad_{\ba_0}$ and the product $\bu_1^{\tau_1} \bv_3^{\tau_1}$ is a $2\bt$-eigenvector for  $\ad_{\ba_2}$. The last claim follows by an explicit computation of the product $\ba_0(\bu_1\bu_2)$, which gives the left hand side of the equation.
\end{proof}

\begin{lemma}\label{polinomi}
In the ring $\Ro$ the following  holds:
\begin{enumerate}
\item $\lambda_{\ba_0}(\ba_4\ba_4-\ba_4)=0$,
\item $\lambda_{\ba_0}(\mathbf{d}_1)=0$,
\item $\lambda_{\ba_0}(\mathbf{d}_2)=0$,
\item $\lambda_{\ba_1}(\mathbf{d}_1)=0$.
\end{enumerate}
\end{lemma}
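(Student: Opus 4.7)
The plan is to observe that each of the four claimed identities in $\Ro$ arises by applying a linear ``axis-coefficient'' map $\lambda_a : \Vo \to \Ro$ to an element of $\Vo$ that is already known to vanish. Since $\lambda_{a_0}$ and $\lambda_{a_1}$ are $\Ro$-linear, each part will follow immediately once the vanishing of the underlying element of $\Vo$ is recorded.

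For part~(1), I would note that $a_4 = a_0^{\rho^2}$ is the image of the axis $a_0$ under the automorphism $\rho^2$ of $\Vo$, and is therefore itself an axis. Idempotency of $a_4$ gives $a_4 a_4 - a_4 = 0$ in $\Vo$, and applying $\lambda_{a_0}$ yields $\lambda_{a_0}(a_4 a_4 - a_4) = 0$ in $\Ro$.

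For parts~(2), (3) and (4), I would appeal directly to Lemma~\ref{u1u2}(1), which records that $d_1 = 0$ and $d_2 = 0$ in $\Vo$. Applying the linear maps $\lambda_{a_0}$ to $d_1$ and to $d_2$, and $\lambda_{a_1}$ to $d_1$, then yields the three remaining identities.

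What makes the lemma useful, and where the real work lies, is not this elementary vanishing argument but the explicit form of the resulting relations as polynomials in $\lmu, \lmf, \lmd, \lmdf$. To exhibit these polynomials one has to expand $a_4 a_4 - a_4$, $d_1$ and $d_2$ in the spanning set of Proposition~\ref{span}, using the formula for $a_4$ from Lemma~\ref{a3}, the formulas for the $s_{i,3}$ from Lemma~\ref{s3f}, and the structure constants provided by Remark~\ref{struct}; the $a_0$- and $a_1$-coefficients that emerge are the nontrivial polynomials that feed into the set $T$ of Theorem~\ref{thm1}. This last computational step is the main obstacle, and is naturally carried out by machine rather than by hand.
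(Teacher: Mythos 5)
Your proof is correct and matches the paper's own argument: part (1) follows from the idempotency of the axis $a_4$, and parts (2)--(4) follow from the vanishing $d_1=d_2=0$ recorded in Lemma~\ref{u1u2}, with $\lambda_{a_0}$ and $\lambda_{a_1}$ applied linearly. Your closing remark about the explicit polynomial expansions being the real computational content is also consistent with how the paper uses this lemma in Section~\ref{strategy}.
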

\begin{proof}
The first equation follows from the fact that $\ba_4$ is an idempotent. The remaining follow from Lemma~\ref{u1u2}.
 \end{proof}

 \section{Proof of Theorem~\ref{nec}}\label{strategy}
 
 By Remark~\ref{struct}, the four expressions on the left hand side of the identities in Lemma~\ref{polinomi} can be computed explicitly and produce respectively four polynomials $p_i(x,y,z,t)$ for $i\in \{1, \ldots , 4\}$ in $\hat D [x,y,z,t]$ (with $x,y,z,t$ indeterminates on $\hat D$), that simultaneously annihilate on the quadruple $(\lambda_1, \lambda_1^f, \lambda_2, \lambda_2^f)$.  We define also, for $i\in \{1,2,3\}$, $q_i(x,z):=p_i(x,x,z,z)$. The polynomials $p_i$'s are too long to be displayed here but have been computed using~\cite{Singular} or~\cite{GAP}, while the polynomials $q_i$ are the following
{\small
 \begin{eqnarray*}
\lefteqn{q_1(x,z)=\frac{128}{\bt^{10}}(-384\bt^5+608\bt^4-376\bt^3+114\bt^2-17\bt+1)x^7}\\
&& +\frac{64}{\bt^{10}}(4352\bt^6-6080\bt^5+2992\bt^4-516\bt^3-40\bt^2+23\bt-2)x^6\\
&& +\frac{64}{\bt^8}(64\bt^4-96\bt^3+52\bt^2-12\bt+1)x^5z\\
&&+\frac{16}{\bt^9}(-38720\bt^6+42912\bt^5-9252\bt^4-5928\bt^3+3477\bt^2-660\bt+44)x^5\\
&&+\frac{16}{\bt^8}(-3168\bt^5+4832\bt^4-2782\bt^3+747\bt^2-92\bt+4)x^4z\\
&&+\frac{32}{\bt^5}(8\bt^2-6\bt+1)x^3z^2\\
&&+\frac{8}{\bt^8}(84832\bt^6-48224\bt^5-50482\bt^4+55573\bt^3-20164\bt^2+3262\bt-200)x^4\\
&&+\frac{8}{\bt^7}(19792\bt^5-30292\bt^4+17700\bt^3-4917\bt^2+647\bt-32)x^3z\\
&&+\frac{16}{\bt^5}(-72\bt^3+62\bt^2-15\bt+1)x^2z^2\\
&& +\frac{8}{\bt^7}(-45888\bt^6-33584\bt^5+119184\bt^4-85132\bt^3+27054\bt^2-4089\bt+240)x^3\\
&&+\frac{4}{\bt^6}(-52880\bt^5+81156\bt^4-47828\bt^3+13527\bt^2-1838\bt+96)x^2z\\
&&+\frac{32}{\bt^4}(48\bt^3-44\bt^2+12\bt-1)xz^2+\frac{4}{\bt^2}(2\bt-1)z^3\\
&&+\frac{4}{\bt^6}(19648\bt^6+114384\bt^5-204648\bt^4+128262\bt^3-38411\bt^2+5598\bt-320)x^2\\
&&+\frac{8}{\bt^5}(16288\bt^5-25096\bt^4+14904\bt^3-4272\bt^2+593\bt-32)xz\\
\end{eqnarray*}

 \begin{eqnarray*}
&&+\frac{2}{\bt^3}(-322\bt^3+301\bt^2-86\bt+8)z^2\\
&&+\frac{8}{\bt^5}(-26112\bt^5+40040\bt^4-23878\bt^3+6959\bt^2-995\bt+56)x\:\:\:\:\:\:\:\:\:\:\:\:\:\:\:\:\:\:\:\:\\
&&+\frac{2}{\bt^4}(-15264\bt^5+23658\bt^4-14169\bt^3+4110\bt^2-580\bt+32)z\\
&&+\frac{4}{\bt^4}(7632\bt^5-11668\bt^4+6932\bt^3-2011\bt^2+286\bt-16),
\end{eqnarray*}

 \begin{eqnarray*}
\lefteqn{q_2(x,z)= \frac{-8(8\bt^2-6\bt+1)}{\bt^4}x^4+\frac{(160\bt^3-56\bt^2-28\bt+8)}{\bt^4}x^3+\frac{(8\bt-4)}{\bt^2}x^2z}\\
&& -\frac{(96\bt^3+96\bt^2-112\bt+20)}{\bt^3}x^2-\frac{(44\bt^2-30\bt+4)}{\bt^2}xz\\
&&+\frac{(140\bt^2-102\bt+16)}{\bt^2}x+\frac{(36\bt^2-26\bt+4)}{\bt}z\\
&&-\frac{(36\bt^2-26\bt+4)}{\bt},\\
\lefteqn{q_3(x,z)=\frac{(-128\bt^3+160\bt^2-64\bt+8)}{\bt^5}x^4+\frac{(64\bt^2-48\bt+8)}{\bt^4}x^3z}\\
&&+\frac{(288\bt^4-280\bt^3+20\bt^2+40\bt-8)}{\bt^5}x^3+\frac{(-112\bt^3+48\bt^2+12\bt-4)}{\bt^4}x^2z\\
&& -\frac{(8\bt-4)}{\bt^2}xz^2+\frac{(-160\bt^4+8\bt^3+228\bt^2-136\bt+20)}{\bt^4}x^2\\
&&+\frac{(12\bt^3+70\bt^2-54\bt+8)}{\bt^3}xz+\frac{(8\bt-4)}{\bt}z^2+\frac{(148\bt^3-246\bt^2+118\bt-16)}{\bt^3}x\\
&&+\frac{(36\bt^3-70\bt^2+34\bt-4)}{\bt^2}z+\frac{(-36\bt^3+62\bt^2-30\bt+4)}{\bt^2}.
\end{eqnarray*}
}
We can now prove  Theorem~\ref{nec}.

\begin{proof}[Proof of Theorem~\ref{nec}]
Let $V$ be a  primitive axial algebra of Monster type $(2\bt, \beta)$ over a ring $R$ of characteristic other than $2$, generated by the two axes $a_0$ and $a_1$. Then, by~\cite[Corollary~3.8]{FMS1}, $V$ is a homomorphic image of $\Vo\otimes_{\hat D} R$ and $R$ is a homomorphic image of $\Ro\otimes_{\hat D} R$. We identify the elements of $\hat D$ with their images in $R$ so that the polynomials  $p_i$ and $q_i$  defined above are considered as polynomials in $R[x,y,z,t]$ and $R[x,z]$, respectively. For each $i\in \Z$, let $ a_i$ be the image of the axis $\ba_i$ and let 
$$
 P:=(\lambda_{ a_0}(a_1), \lambda_{ a_1}( a_0), \lambda_{ a_0}( a_2), \lambda_{a_1}( a_{-1})).
 $$
 By Corollary~\ref{Rin}, $\Ro\otimes_{\hat D} \F$ is 
isomorphic to $\F[\lambda_1, \lambda_1^f, \lambda_2, \lambda_2^f]$. Let 
$$
v_{P}:\F[x,y,z,t] \to \F
$$ 
be the $\F$-algebra homomorphism that associates to each polynomial $f \in \F[x,y,z,t] $ its value on the quadruple $P$ (of course we are assuming $x,y,z,t$ to be algebraically independent indeterminates over $\F$ too). Let $I_{P}$ be the kernel of $v_{P}$ and set  $$
U_{P}:=\Vo\otimes_{\hat D} \F/(\Vo\otimes_{\hat D} \F)I_P.
$$
Then, $U_{P}$ is a primitive axial $\F$-algebra of Monster type $(2\beta, \beta)$.  
We denote the images of an element $\delta$ of $\Ro\otimes_{\hat D} \F$ in $\F$ via $v_{P}$ by $\bar \delta$ and by $\overline{p}_i$ and $\overline{q}_i$ the polynomials in $\F[x,y,z,t]$ and $\F[x,z]$ corresponding to $p_i$ and $q_i$, respectively. Set 
\begin{equation}\label{T}
T:=\{\overline{p}_1, \overline{p}_2, \overline{p}_3, \overline{p}_4\}.
\end{equation}
By definition, the $\overline{p}_i$'s have the coefficients in the field $ \F_0(\bt)$. By~\cite[Corollary~3.8]{FMS1}, $P$ is the homomorphic image in $R^4$ of the quadruple $(\lambda_1, \lambda_1^f, \lambda_2, \lambda_2^f)$ and so it is a common zero of the set $T$ and $V$ is isomorphic to a quotient of $U_{P}$. Hence the map 
$\xi: \mathcal M_{2}(2,\F) \to \mathcal V(T)$ mapping $V$ to $P$ gives us the map of Theorem~\ref{nec}.
\end{proof}

If $V$ satisfies the hypothesis of Theorem~\ref{nec} and, in addition, it is symmetric, then 
$$\lambda_{ a_0}( a_1)= \lambda_{ a_1}( a_0) \:\:\mbox{ and }\:\: 
\lambda_{ a_0}( a_2),=\lambda_{ a_1}( a_{-1})
$$
and the pair $ (\lambda_{ a_0}( a_1), \lambda_{ a_0}( a_2))$ is a solution of the system 
\begin{eqnarray}\label{syst}
\left \{
\begin{array}{rcl}
\bar q_1(x,z)& =&0\\
\bar q_2(x,z)& =&0\\
\bar q_3(x,z)& =&0.\\
\end{array}
\right . 
\end{eqnarray}
\medskip

\begin{lemma}\label{res}
For any field $\F$, the resultant of the polynomials $q_2(x,z)$ and 
$q_3(x,z)$ with respect to $z$ is 
$$
\gamma  x(x-1)(2x-\bt)^3[(16\bt-6)x+(-18\bt^2+\bt+2)] [(8\bt-2)x+(-9\bt^2+2\bt)],
$$
where 
$$\gamma:=\frac{-16(2\bt-1)^3(4\bt-1)}{\bt^{10}}.
$$

\end{lemma}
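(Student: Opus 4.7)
The plan is to compute the resultant directly from the Sylvester matrix. Since $q_2(x,z)$ has degree $1$ in $z$ and $q_3(x,z)$ has degree $2$ in $z$, I would write
\[
q_2(x,z)=A(x)z+B(x),\qquad q_3(x,z)=C(x)z^2+D(x)z+E(x),
\]
reading off $A,B,C,D,E\in R_0[\tfrac{1}{2},\bt,\tfrac{1}{\bt}][x]$ directly from the displayed formulas for $q_2$ and $q_3$. The resultant with respect to $z$ is then the $3\times 3$ Sylvester determinant
\[
\mathrm{Res}_z(q_2,q_3)=\det\begin{pmatrix} A & B & 0 \\ 0 & A & B \\ C & D & E\end{pmatrix}=A^2 E-ABD+B^2 C,
\]
which is a polynomial in $x$ with coefficients in $R_0[\tfrac{1}{2},\bt,\tfrac{1}{\bt}]$, valid over any field $\F$ by base change.

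Each of the three summands $A^2 E$, $ABD$, $B^2 C$ has naive degree at most $9$ in $x$. A short check using the factorizations of the leading coefficients (for instance $16\bt^3-20\bt^2+8\bt-1=(4\bt-1)(2\bt-1)^2$ for the top coefficient of $E$, together with the parallel identities for $A$, $B$, $C$, $D$) shows that the monomials in $x^9$ and $x^8$ cancel, leaving a polynomial of degree exactly $7$ in $x$, in agreement with the total degree $1+1+3+1+1=7$ predicted by the claimed factorization.

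To recover the factored form, I would verify that each of the five linear forms $x$, $x-1$, $2x-\bt$, $(16\bt-6)x-(18\bt^2-\bt-2)$ and $(8\bt-2)x-(9\bt^2-2\bt)$ divides $\mathrm{Res}_z(q_2,q_3)$, by substituting the unique root $x_0$ in $x$ and checking that the resulting univariate polynomials $q_2(x_0,z)$ and $q_3(x_0,z)$ share a common factor in $z$ (equivalently, that the specialized Sylvester determinant vanishes). The triple multiplicity of $2x-\bt$ would be confirmed by expanding $\mathrm{Res}_z(q_2,q_3)$ as a Taylor series in $x-\bt/2$ and observing the vanishing of the constant, linear and quadratic terms; equivalently, by showing that after substituting $x=\bt/2$ the pair $(q_2,q_3)$ has a common root in $z$ of multiplicity three. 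Finally, the constant $\gamma$ is pinned down by matching the coefficient of $x^7$ on both sides, which by the cancellation described above is the explicit rational function of $\bt$ displayed in the statement.

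The main obstacle is not conceptual but logistical: every coefficient above is itself a rational function in $\bt$, and fully expanding $A^2 E-ABD+B^2 C$ by hand and then factoring it is impractical. In line with Remark~\ref{struct}, the efficient route is to hand the $3\times 3$ Sylvester determinant to a computer algebra system such as \cite{Singular} or \cite{GAP} and compare its factored output against the stated expression; all the cancellations and factorizations claimed above are then verified in a single machine computation.
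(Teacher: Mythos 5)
Your proposal is correct and matches the paper's approach: the paper's proof consists of a single line stating that the resultant is computed in $\Z[\bt,\bt^{-1}][x]$ using \textsc{Singular}, which is exactly the machine computation you arrive at after setting up the $3\times 3$ Sylvester determinant. The extra detail you give (the explicit determinant $A^2E-ABD+B^2C$, the degree drop from $9$ to $7$, and the root/multiplicity checks for the factorization) is a sound elaboration of what that computation verifies.
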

\begin{proof}
The resultant have been computed in the ring $\Z[\bt, \bt^{-1}][x]$ using~\cite{Singular}.
\end{proof}

We set 
\begin{eqnarray*}
\mathcal S_0&:=&\left \{\left (\frac{\bt}{2}, \frac{\bt}{2}\right ),\:\: \left (\bt, 0 \right ), \:\:  \left (\bt, \frac{\bt}{2} \right ) \right \},\\
\mathcal S_1&:=&\mathcal S_0\cup \left \{(1,1),\:\: (0,1),\:\: \left (\bt, 1 \right )\right \},\\
\mathcal S_2&:=&\mathcal S_1\cup \left \{\left (\frac{(18\bt^2-\bt-2)}{2(8\bt-3)}, \frac{(48\bt^4-28\bt^3+7\bt-2)(3\bt-1)}{2\bt^2(8\bt-3)^2}\right )\right \} \mbox{ if }\bt\neq \frac{3}{8},\\
\mathcal S_3&:=&\mathcal S_2\cup \left \{\left (\frac{(9\bt^2-2\bt)}{2(4\bt-1)}, \frac{(9\bt^2-2\bt)}{2(4\bt-1)}\right )\right \} \mbox{ if } \bt\neq \frac{1}{4}.
\end{eqnarray*}
\medskip

\begin{lemma}\label{solutions}
Let $\F$ be a field of characteristic other than $2$, $\bt\in \F$. Then, the set of the solutions of the system of equations~(\ref{syst}) is 
\begin{enumerate}
\item $\mathcal S_0\cup \left \{(\mu,1)\:|\:\mu\in \F\right \}$, if $\bt=\frac{1}{4}$; 
\item $\mathcal S_2\cup \mathcal S_3$, if either $\bt \in \{\frac{1}{2}, \frac{1}{3}, \frac{2}{7}\}$ or $\bt\not \in \left \{ \frac{1}{4}, \frac{3}{8}\right \}$ and 
\begin{equation}\label{condition1}
(16\bt^4-48\bt^3-51\bt^2+46\bt-8)(18\bt^2-\bt-2)(5\bt^2+\bt-1)(4\bt^2+2\bt-1)=0 \nonumber;
\end{equation}  
\item  $\mathcal S_3$ in all the remaining cases.
\end{enumerate}
\end{lemma}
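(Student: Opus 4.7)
The plan is to carry out classical elimination theory. Using Lemma~\ref{res}, the resultant of $q_2$ and $q_3$ with respect to $z$ is an explicit polynomial in $x$ of degree $7$; when $\bt\notin\{1/4,1/2\}$ the leading coefficient $\gamma$ is a unit, so the $x$-coordinate of any isolated common zero of $q_2, q_3$ (i.e.\ one at which the leading $z$-coefficient of $q_2$ does not vanish) belongs to the five-element set of roots, namely
$$\{0,\ 1,\ \bt/2,\ (18\bt^2-\bt-2)/(16\bt-6),\ (9\bt^2-2\bt)/(8\bt-2)\},$$
the last two being present only when the denominator is nonzero. Since $q_2$ is linear in $z$, each such $x_0$ determines a unique $z_0$ via $q_2(x_0,z_0)=0$, yielding at most five candidate points which must then be tested against $q_1$.

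The resultant however misses a positive-dimensional component of the common zero locus of $q_2, q_3$: a direct inspection shows that the coefficient of $z$ in $q_2$ and the coefficient of $z^2$ in $q_3$ both vanish at $x = \bt$, and in fact $q_2(\bt,z)$ and $q_3(\bt,z)$ vanish identically as polynomials in $z$. Hence the entire line $\{\bt\}\times \F$ is contained in the common zero locus of $q_2,q_3$, and the full system restricts on this line to the single univariate equation $q_1(\bt,z)=0$; its roots over $\F$ account for the points of $\mathcal{S}_1$ with first coordinate $\bt$ in the generic situation, and for the extra points adjoined to $\mathcal{S}_2$ in the specific cases $\bt\in\{1/2,1/3,2/7\}$.

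The five non-degenerate candidates from the resultant are then analyzed by substitution into $q_1$: for the ``cheap'' factors $x$, $x-1$, $2x-\bt$ the polynomial $q_1$ vanishes automatically, producing the remaining members of $\mathcal{S}_1$; for the candidate corresponding to the fifth factor $q_1$ vanishes unconditionally (provided $\bt\neq 1/4$), yielding the extra point of $\mathcal{S}_3\setminus \mathcal{S}_2$; and for the candidate from the fourth factor $q_1$ vanishes precisely when the polynomial identity (\ref{condition1}) holds, which is exactly what distinguishes case (2) from case (3). The exceptional values $\bt=1/4$ (where $\gamma=0$ and $\mathcal{S}_3$ is undefined) and $\bt=1/2$ are then handled by direct substitution at these specific $\bt$: at $\bt=1/4$ one verifies that every point of the line $\{(\mu,1):\mu\in\F\}$ is a common zero of all three $q_i$, giving case (1). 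The main obstacle is computational: the coefficients of $q_1$ are formidable, so the substitutions and subsequent factoring in $\bt$ needed to recognise (\ref{condition1}) are feasible only with a computer algebra system such as~\cite{Singular} or~\cite{GAP}. Conceptually the argument is routine Bezout-style elimination, but the bookkeeping of which special values of $\bt$ trigger which change in the $\mathcal{S}_i$ is the secondary difficulty.
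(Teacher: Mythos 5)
Your overall strategy is the same as the paper's: use the resultant of Lemma~\ref{res} to pin down the possible first coordinates of a solution, exploit the linearity of $q_2$ in $z$ to determine the second coordinate, and then test the candidates against $q_1$ by computer algebra. Your explicit treatment of the line $x=\bt$ is a genuine (and in fact necessary) addition: $q_2(\bt,z)$ and $q_3(\bt,z)$ do vanish identically, the solutions $(\bt,0)$, $\left(\bt,\frac{\bt}{2}\right)$, $(\bt,1)$ of $\mathcal S_0\cup\mathcal S_1$ all lie on this line, and the factorization displayed in Lemma~\ref{res} does not have $x=\bt$ as a root (its value there is $\gamma\bt^6(\bt-1)(2\bt^2+5\bt-2)$), so the paper's bare appeal to Lemma~\ref{res} together with ``$q_2$ is linear in $z$, hence at most one solution per $x$'' cannot by itself account for those three points; you supply the missing component.

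One concrete slip: you attribute the extra solution appearing for $\bt\in\left\{\frac{1}{2},\frac{1}{3},\frac{2}{7}\right\}$ to the line $x=\bt$. It does not lie there: the point adjoined to form $\mathcal S_2$ has first coordinate $\frac{18\bt^2-\bt-2}{2(8\bt-3)}$, which equals $\frac{1}{2}$ when $\bt=\frac{1}{3}$ and $2\bt$ when $\bt=\frac{2}{7}$, not $\bt$. Those cases arise from your ``fourth factor'' candidate exactly as condition~(\ref{condition1}) does: the value of $q_1$ at that candidate is a polynomial in $\bt$ whose roots are $\frac{1}{2},\frac{1}{3},\frac{2}{7}$ together with the roots of~(\ref{condition1}), so your phrase ``$q_1$ vanishes precisely when~(\ref{condition1}) holds'' drops the first three values. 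This is a bookkeeping error rather than a structural one, and it corrects itself once the substitution into $q_1$ is actually performed in a CAS as you propose; note also that, like the paper, you defer the value $\bt=\frac{1}{2}$ (where $\gamma=0$ and Lemma~\ref{res} carries no information) to an unexecuted direct check.
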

\begin{proof}
Using~\cite{Singular}, it is straightforward to check that  the possible values $x_0$ for a solution $(x_0, z_0)$ of the system~(\ref{syst}) are given by Lemma~\ref{res} when $\bt\neq \frac{1}{4}$ and can be computed directly when $\bt= \frac{1}{4}$. Since $q_2(x,z)$ is linear in $z$, for every value of $x$ there is at most a solution of the system. The elements of $\mathcal S_2$ are indeed solutions.  When $x_0=\frac{(18\bt^2-\bt-2)}{2(8\bt-3)}$, we solve $q_2(x_0,z)=0$ obtaining the given corresponding value for $z_0$. On the other hand, the value of $q_1(x,z)$ computed in $\Z[\bt]$ on this pair is a non-zero polynomial in $\bt$ which vanishes exactly when either $\bt\in \{\frac{1}{2}, \frac{1}{3}, \frac{2}{7}\}$, or $\bt\neq \frac{3}{8}$ and the equation in the second claim is satisfied.
  \end{proof}

In order to classify primitive axial algebras of Monster type $(2\beta, \beta)$ over $\F$ generated by two axes $ a_0$ and $ a_1$ we can proceed, similarly as we did in~\cite{FMS1}, in the following way. We first solve the system~(\ref{syst}) and classify all symmetric algebras. Then we observe that, the even subalgebra $\langle \langle  a_0,  a_2\rangle \rangle$ and the odd  subalgebra $\langle \langle  a_{-1},  a_1\rangle \rangle$ are symmetric, since the automorphisms  $\tau_1$ and $\tau_0$ respectively, swap their generating axes. Hence, from the classification of the symmetric case, we know all possible configurations for the subalgebras $\langle \langle  a_0,  a_2\rangle \rangle$ and $\langle \langle  a_{-1},  a_1\rangle \rangle$ and from the relations found in Section~\ref{mult}, we derive the structure of the entire algebra.

\section{The symmetric case} \label{symmetric}
In this and in the following section we let $V$ be a primitive axial algebra of Monster type $(2\bt, \bt)$ over a field $\F$ of characteristic other than $2$, generated by the two axes $ a_0$ and $ a_1$. By~\cite[Corollary~3.8]{FMS1}, $V$ is a homomorphic image of $\Vo\otimes_{\hat D} \F$. For every element $\bv\in \Vo$, we denote by $ v$ its image in $V$. In particular $a_0$ and $a_1$ are the images of $\ba_0$ and $\ba_1$ and all the formulas obtained from the ones in Lemmas~\ref{psi},~\ref{a0s2f},~\ref{a3}, ~\ref{u1u2} with $a_i$ and $s_{r,j}$ in the place of $\ba_i$ and $\bs_{r,j}$, respectively, hold in $V$. With an abuse of notation we identify the elements of $\Ro$ with their images in $\F$, so that in particular $\lmu=\lambda_{ a_0}( a_1)$, $\lmf=\lambda_{ a_1}( a_0)$,  $\lmd=\lambda_{a_0}( a_2)$ , and $\lmdf= \lambda_{ a_1}( a_2)$. 

We begin with a quick overview of the known $2$-generated primitive symmetric algebras of Monster type $(2\bt, \bt)$. Among these, there are 
\begin{enumerate}
\item the algebras of Jordan type $1A$, $2B$, $3C(\bt)$ and $3C(2\bt)$. 
\item the algebra $3A(2\bt, \bt)$ defined in~\cite{R}.
\item  the algebras $4J(2\bt,\bt)$ and $6J(2\bt,\bt)$ defined in~\cite{DM}\footnote{We adopt here the terminology proposed in~\cite{forbidden}.}. 
\item the $3$-dimensional  algebra $V_3(\bt)$ with basis $( a_0,  a_1,  a_2)$ and the multiplication defined as in Table~\ref{tableV3}. Note that it coincides with the algebra $III_3(\xi, \frac{1-3\xi^2}{3\xi-1},0)^\times$ defined by Yabe in~\cite{Yabe}, with $\xi=2\bt$.
\item the $5$-dimensional algebra $4Y(2\bt,\bt)$ with basis $( a_{3},  a_0,  a_1,  a_2,  s)$ and multiplication table Table~\ref{tableY5}. Note that it coincides with the algebra $IV_2(\xi, \beta, \mu)$ defined by Yabe~\cite{Yabe}, when $\beta=\frac{1-\xi^2}{2}$ and $\xi=2\beta$. 
\end{enumerate}
\begin{table}

{\Small
  $$
\begin{array}{c|c|c}
\mathrm{ Basis } & \mathrm{ Products } & \mathrm{ Form }\\
\hline
 a_{-1},   a_0,  a_1& \begin{array}{l}  a_i^2= a_i\\
 a_i a_{i+1}=\frac{3}{2}\bt( a_i+ a_{i+1})+\frac{\bt}{2} a_{i-1} \:\:\:\:\:\mathrm{ where } \: a_i:= a_{i+3} \\
\end{array}&
\begin{array}{l}
( a_i,  a_i)=1\\
( a_i,  a_{i+1})=\frac{9\bt+1}{4}\\
\end{array} \\
\hline
\end{array} 
$$
}

  \caption{Multiplication table for the algebra $V_3(\bt)$}\label{tableV3} 
  \end{table}
 
 It is immediate that the values of $ (\lmu, \lmd)$ corresponding to the trivial algebra $1A$ and to the algebra $2B$ are $(1,1)$ and $(0,1)$, respectively.   In the  following lemmas we list the key features of these algebras.
 
\begin{lemma}\label{algebraV3}
Let $\F$ be a field of characteristic other than $2$ and $\bt\in \F$ such that $18\bt^2-\bt-1=0$. The algebra $V_3(\bt)$ is a $2$-generated symmetric Frobenius axial algebra satisfying the fusion law $\mathcal M(2\bt,\bt)$ and such that for every $i\in \{-1,0,1\}$, $\ad_{ a_i}$ has eigenvalues $1$, $2\bt$, and $\bt$. In particular it not an axial algebra of Jordan type. Moreover, $\lmu=\lmd=\frac{9\bt+1}{4}$. Furthermore 
\begin{enumerate}
\item if $ch\:\F\neq 3$, the algebra $V_3(\bt)$ is primitive and simple;
\item if $ch\:\F= 3$, then $\bt= 2$ and $V_3(\bt)$ is neither primitive nor simple. It has a $2$-dimensional quotient over the ideal $\F(  a_0+ a_{-1}+ a_2)$ isomorphic to $3C(-1)^\times$ and a quotient isomorphic to $1A$ (over the ideal $\langle  a_0- a_1,  a_0- a_2\rangle $). 
\end{enumerate}
\end{lemma}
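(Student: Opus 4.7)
The strategy is to work throughout with the explicit multiplication table defining $V_3(\bt)$ and to exploit the obvious $S_3$-symmetry: the cyclic permutation $\sigma = (\bar a_{-1}\,\bar a_0\,\bar a_1)$ preserves Table~\ref{tableV3}, so every statement about $\ad_{\bar a_0}$ transfers automatically to $\ad_{\bar a_1}$ and $\ad_{\bar a_{-1}}$. Commutativity and idempotency of the three generators are read off the table. First I diagonalize $\ad_{\bar a_0}$: the vector $\bar a_0$ is a $1$-eigenvector, $v_\bt := \bar a_1 - \bar a_{-1}$ is a $\bt$-eigenvector by direct computation, and solving for a vector $\bar a_1 + \bar a_{-1} + c\,\bar a_0$ with eigenvalue $2\bt$ forces $c = \frac{3\bt}{2\bt-1}$, which is well defined provided $2\bt \neq 1$ (the hypothesis $18\bt^2 - \bt - 1 = 0$ rules out $\bt = 1/2$ unless $\operatorname{char}\F = 3$). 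No $0$-eigenvector arises, which gives both the eigenvalue claim and the failure of Jordan type. To verify $\mathcal M(2\bt,\bt)$ I expand $v_\bt^2$, $v_\bt v_{2\bt}$ and $v_{2\bt}^2$ in the eigenbasis; the decisive identity is that the $2\bt$-component of $v_{2\bt}^2$ has coefficient $\frac{18\bt^2 - \bt - 1}{2\bt - 1}$, which vanishes exactly by hypothesis. Decomposing $\bar a_1 = \lambda_1 \bar a_0 + \tfrac12 v_{2\bt} + \tfrac12 v_\bt$ yields $\lambda_1 = -\frac{3\bt}{2(2\bt-1)}$, and clearing denominators modulo $18\bt^2 - \bt - 1$ rewrites this as $\frac{9\bt+1}{4}$. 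The rotation $\sigma$ is an algebra automorphism; comparing its action on the eigenspaces of $\ad_{\bar a_0}$ with the definition of $\rho = \tau_0\tau_1$ identifies $\sigma = \rho$, so $\bar a_2 = \bar a_0^\rho = \bar a_{-1}$ and hence $\lambda_2 = \lambda_{\bar a_0}(\bar a_{-1}) = \lambda_1$. The Frobenius form is forced by $S_3$-symmetry to be $(\bar a_i, \bar a_i) = 1$ and $(\bar a_i, \bar a_j) = c$ for $i \neq j$; the single associativity test $(\bar a_0\bar a_1, \bar a_1) = (\bar a_0, \bar a_1^2)$ returns $c = \lambda_1$, and all remaining associativity relations reduce to this one by symmetry.

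For part (1), $\operatorname{char}\F \neq 3$ ensures $2\bt \neq 1$ and $\bt \neq 1$ (a direct check on the roots of the quadratic), so the three eigenvalues $1, \bt, 2\bt$ of $\ad_{\bar a_0}$ are pairwise distinct, $V_1^{\bar a_0} = \langle \bar a_0\rangle$, and $V_3(\bt)$ is primitive. For simplicity, take a nonzero ideal $I$: by Lemma~\ref{ideal}, $I$ contains one of $\bar a_0$, $v_\bt$, $v_{2\bt}$. Squaring the relevant eigenvector and applying Lemma~\ref{ideal} once more forces $\bar a_0 \in I$ in each case, after which the products $\bar a_0\bar a_{\pm 1}$ drag $\bar a_1$ and $\bar a_{-1}$ into $I$, so $I = V_3(\bt)$. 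The main technical obstacle is verifying that the scalars appearing when $v_\bt^2$ and $v_{2\bt}^2$ are projected onto $\bar a_0$ are non-zero under the hypothesis; this is handled by substituting $18\bt^2 = \bt + 1$ to reduce the rational expressions to explicit linear polynomials in $\bt$ and checking that these do not vanish, which is precisely where $\operatorname{char}\F \neq 3$ enters.

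For part (2), in characteristic $3$ the equation $18\bt^2 - \bt - 1 = 0$ collapses to $-\bt - 1 = 0$, forcing $\bt = 2 = -1$ and hence $2\bt = 1$, so the $2\bt$- and $1$-eigenspaces of each $\ad_{\bar a_i}$ merge. A direct evaluation on the table at $\bt = 2$ yields $\bar a_0\cdot(\bar a_0 + \bar a_1 + \bar a_{-1}) = \bar a_0 + \bar a_1 + \bar a_{-1}$, so $V_1^{\bar a_0}$ is two-dimensional and $V_3(\bt)$ fails primitivity. The line $\F(\bar a_0 + \bar a_1 + \bar a_{-1})$ is an ideal (by the $S_3$-symmetry it is a common $1$-eigenvector of every $\ad_{\bar a_i}$), and the quotient, after substituting $\bar a_{-1} = -\bar a_0 - \bar a_1$, satisfies $\bar a_0\bar a_1 = -\bar a_0 - \bar a_1$, matching the defining relation of $3C(-1)^\times$. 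Finally, the subspace $\langle \bar a_0 - \bar a_1,\, \bar a_0 - \bar a_{-1}\rangle$ is stable under the algebra product in this specialization (using $\bar a_0\bar a_1 = \bar a_{-1}$ and its cyclic analogues), and the resulting one-dimensional quotient is isomorphic to $1A$.
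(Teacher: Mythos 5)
Your proposal is correct, and its first half is essentially the paper's own computation: you diagonalise $\ad_{\bar a_0}$ with the $\bt$-eigenvector $\bar a_1-\bar a_{-1}$ and the $2\bt$-eigenvector $\bar a_1+\bar a_{-1}+\frac{3\bt}{2\bt-1}\bar a_0$ (the paper's $-\frac{3\bt+1}{8}\bar a_0+\frac{\bt}{2}(\bar a_1+\bar a_{-1})$ is the same vector up to a scalar once $18\bt^2-\bt-1=0$), identify $v_{2\bt}^2\in\langle\bar a_0\rangle$ as the one fusion constraint that forces $18\bt^2-\bt-1=0$, read off $\lmu=-\frac{3\bt}{2(2\bt-1)}=\frac{9\bt+1}{4}$, and pin down the Frobenius form by a single associativity test; part (2) also matches the paper's (very terse) treatment. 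Where you genuinely diverge is the simplicity claim in part (1): the paper notes that the projection graph is connected, invokes~\cite{KMS} to place every proper ideal inside the radical of the Frobenius form, and then checks that the Gram determinant $\frac{27}{32}(1-3\bt)^2(1+3\bt)$ is nonzero (neither $\bt=\frac{1}{3}$ nor $\bt=-\frac{1}{3}$ satisfies $18\bt^2-\bt-1=0$ when $ch\:\F\neq 3$); you instead run a direct ideal chase via Lemma~\ref{ideal}, extracting an eigenvector of $\ad_{\bar a_0}$ from any nonzero ideal and squaring it to capture $\bar a_0$. Your route is more self-contained, avoiding the structure theory of~\cite{KMS}, but it disperses the arithmetic into several separate non-vanishing checks (e.g.\ the $\bar a_0$-coefficient of $(\bar a_1-\bar a_{-1})^2$ reduces to $7\bt-2\neq 0$, which under the hypothesis fails only in characteristic $3$), all of which the single determinant bundles together; both arguments are sound. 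One small phrasing point: the failure of Jordan type follows not from the absence of a $0$-eigenvalue but from the simultaneous presence of the two eigenvalues $\bt$ and $2\bt$, neither in $\{0,1\}$, so the spectrum cannot have the form $\{1,0,\eta\}$; your eigenvalue computation already establishes this.
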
 
\begin{proof}
If $ch\:\F\neq 3$, then $ a_1- a_{-1}$ and $-\frac{3\bt+1}{8}  a_0+\frac{\bt}{2}( a_1+ a_{-1})$ are respectively a $\bt$- and $2\bt$-eigenvector for $\ad_{ a_0}$ and 
$$
 a_1=\frac{3\bt+1}{8\bt}  a_0+\frac{1}{\bt}\left [-\frac{3\bt+1}{8}  a_0+\frac{\bt}{2}( a_1+ a_{-1})+\frac{\bt}{2}( a_1- a_{-1})\right ],
$$
whence $\lmu=\frac{3\bt+1}{8\bt}=\frac{9\bt+1}{4}$. The Frobenius form is defined by $( a_i,  a_i)=1$ and $( a_i,  a_j)=\lmu$, for $i,j\in \{-1,0,1\}$ and $i\neq j$. 
 The projection graph (see~\cite{KMS} for the definition) has $ a_0$ and $ a_1$ as vertices and an edge between them since $( a_0,  a_1)\neq 0$. Thus it is connected and so by~\cite[Corollary~4.15 and Corollary~4.11]{KMS} 
every proper ideal of $V$  is contained in the radical of the form. Since the  determinant of the Gram matrix of the Frobenius form with respect to the basis $( a_{-1},  a_0,  a_1)$  is always non-zero, the algebra is simple.

If $ch\:\F= 3$, then condition $18\bt^2-\bt-1=0$ implies $\bt=2$. Hence $2\bt=1$ and $ a_0$ and $ a_1$ are both $1$-eigenvectors for $\ad_{ a_{0}}$. All the other properties are easily verified.
\end{proof} 

\begin{lemma}\label{algebra3A}
Let $\F$ be a field of characteristic other than $2$ and $\bt\in \F\setminus \{0,1, \frac{1}{2},  \frac{1}{4}\}$. The algebra $3A(2\beta, \beta)$ is a $2$-generated symmetric Frobenius axial algebra of Monster type $(2\bt,\bt)$ with  $\lmu=\lmd=\frac{\bt(9\bt-2)}{2(4\bt-1)}$. It is simple except when $(18\bt^2-\bt-1)(9\beta^2-10\beta+2)(5\beta-1)= 0$, in which case one of the following holds 
\begin{enumerate}
\item  $\beta=\frac{1}{5}$, $ch \:\F\neq 3$, and there is a unique quotient of maximal dimension which is isomorphic to $3C(\beta)$;
\item  $18\bt^2-\bt-1=0$, $ch \:\F\neq 3$, and there is a unique non trivial quotient  which is isomorphic to $V_3(\beta)$;
\item $9\beta^2-10\beta+2=0$, $ch \:\F\neq 3$, and there is a unique non trivial quotient  which is isomorphic to $1A$;
\item  $ch \:\F=3$, $\bt=-1$ and there are four non trivial quotients isomorphic respectively to $3C(-1), 3C(-1)^\times, V_3(-1), 1A$ (see~\cite[(3.4)]{HRS2} for the definition of $3C(-1)^\times$). 
 \end{enumerate}
\end{lemma}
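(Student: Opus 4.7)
The strategy mirrors the proof of Lemma~\ref{algebraV3}. I would begin from the explicit presentation of $3A(2\bt,\bt)$ given in~\cite{R}, whose underlying module is $4$-dimensional with basis consisting of the three axes $\bar a_0,\bar a_1,\bar a_2:=\bar a_0^{\tau_1}$ together with an extra vector $u$ which can be taken to be the $0$-eigenvector $\bar u_1$ of $\ad_{\bar a_0}$ in the notation of~(\ref{ai}). That $3A(2\bt,\bt)$ is a $2$-generated symmetric primitive axial algebra satisfying the fusion law $\mathcal M(2\bt,\bt)$ is part of the construction and only needs to be recalled from~\cite{R}.

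To pin down $\lmu$ and $\lmd$, I would decompose $\bar a_1$ and $\bar a_2$ into $\ad_{\bar a_0}$-eigenvectors using the explicit multiplication table of $3A(2\bt,\bt)$ and read off the coefficient of $\bar a_0$ in the $1$-component, following Lemma~\ref{lambdaa}. A short calculation gives $\lmu=\lmd=\frac{\bt(9\bt-2)}{2(4\bt-1)}$; the equality $\lmu=\lmd$ is also forced by symmetry, since $\tau_1$ swaps $\bar a_0$ with $\bar a_2$ and fixes $\bar a_1$.

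For the simplicity claim I would equip $3A(2\bt,\bt)$ with its canonical Frobenius form, normalised by $(\bar a_i,\bar a_i)=1$ and $(\bar a_i,\bar a_j)=\lmu$ for distinct axes. As in Lemma~\ref{algebraV3}, the two generating axes yield a connected projection graph, so~\cite[Corollaries~4.11 and~4.15]{KMS} imply that every proper ideal of the algebra lies in the radical of this form. Computing the $4\times 4$ Gram determinant in the basis $(\bar a_0,\bar a_1,\bar a_2,u)$ and factoring the result over $\Z[\bt,\bt^{-1}]$ should yield, up to a unit, the polynomial $(18\bt^2-\bt-1)(9\bt^2-10\bt+2)(5\bt-1)$. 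Away from its vanishing locus the form is non-degenerate and the algebra is simple.

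It then remains to describe the radical, and hence the maximal proper quotient, in each exceptional case. For $5\bt-1=0$ the radical is one-dimensional and collapses $u$ into a linear combination of the axes, producing $3C(\bt)$; the value $\bt=1/5$ already excludes characteristic $3$ via the unit $4\bt-1$. For $18\bt^2-\bt-1=0$ outside characteristic $3$ the radical again cuts off one dimension, leaving the three-axis algebra $V_3(\bt)$ of Lemma~\ref{algebraV3}. For $9\bt^2-10\bt+2=0$ the radical has codimension one, forcing the quotient to be $1A$. The main technical obstacle is the degenerate characteristic-$3$ case at $\bt=-1$, where all three factors vanish simultaneously and the Frobenius form becomes highly degenerate; here I must enumerate the lattice of ideals directly rather than merely identify the radical, and verify that the four quotients $3C(-1)$, $3C(-1)^\times$, $V_3(-1)$, $1A$ exhaust the non-trivial proper ones, by exhibiting defining relations for each ideal and checking the induced fusion law on the cokernel.
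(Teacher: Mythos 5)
Your proposal follows essentially the same route as the paper: equip $3A(2\bt,\bt)$ with its Frobenius form, observe the projection graph on the two generators is connected so that \cite[Corollaries 4.11 and 4.15]{KMS} confine every proper ideal to the radical, compute the $4\times4$ Gram determinant, and identify the radical case by case. Two small points of divergence are worth noting. First, the paper works in the basis $(\bar a_0,\bar a_1,\bar a_2,\bar s_{0,1})$ rather than with the $0$-eigenvector $\bar u_1$, and the determinant it obtains is $-\bt(9\bt^2-10\bt+2)^3(18\bt^2-\bt-1)(5\bt-1)/(16(4\bt-1)^5)$ --- your predicted factorisation misses the cube on $(9\bt^2-10\bt+2)$, which is harmless for locating the degeneracy locus but is the numerical signal that the radical jumps to dimension $3$ in that case (as you correctly anticipate). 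Second, and more substantively, in the cases where the radical is not one-dimensional the containment of ideals in the radical does not by itself give the uniqueness statements: for case (3) one must additionally prove that the three-dimensional radical is a \emph{minimal} ideal (the paper does this using Lemma~\ref{ideal} on eigencomponents of ideal elements), and in the characteristic-$3$ case one must actually exhibit the proper subideals of the radical --- the paper finds exactly the three one-dimensional ideals spanned by $\bar a_0+\bar a_1+\bar a_2+\bar s_{0,1}$, $\bar a_0+\bar a_1+\bar a_2-\bar s_{0,1}$ and $\bar a_0+\bar a_1+\bar a_2$, whence the four listed quotients. You flag the characteristic-$3$ enumeration as necessary, so the only genuine omission is the minimality check in case (3); it is routine but required for the ``unique non trivial quotient'' claim.
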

\begin{proof}
Let $V$ be the algebra $3A(2\beta, \beta)$. Then $V$ has a Frobenius form and the projection graph (see~\cite{KMS} for the definition) has $ a_0$ and $ a_1$ as vertices and an edge between them since $( a_0,  a_1)\neq 0$. Thus it is connected and so by~\cite[Corollary~4.15 and Corollary~4.11]{KMS} 
every proper ideal of $V$  is contained in the radical of the form. The  Gram matrix of the Frobenius form with respect to the basis $( a_0,  a_1,  a_2,  s_{1,0})$ can be computed easily and 
has determinant 
$$
-\frac{\bt (9\beta^2-10\beta+2)^3(18\bt^2-\bt-1)(5\beta-1)}{16(4\bt-1)^5}.
$$
Suppose first $ch \:\F\neq 3$. When $\bt=\frac{1}{5}$ we see that the radical is generated by the vector $\frac{\bt}{2}( a_0+ a_1+ a_2)+ s_{0,1}  $ and hence the quotient over the radical is isomorphic to the algebra $3C(\bt)$. If $(18\bt^2-\bt-1)=0$, then the radical is generated by the vector $-\frac{\bt}{2}( a_0+ a_1+ a_2)+ s_{0,1}  $ and it follows that the quotient over the radical is isomorphic to the algebra $V_3(\bt)$, which by Lemma~\ref{algebraV3} is simple. Finally, if $(9\beta^2-10\beta+2)=0$, then the radical is three dimensional, with generators
$$
 a_0- a_2, \:\:  a_0- a_1, \:\: (2\bt-1) a_0 + s_{0,1}.
$$
It is immediate to see that the quotient over the radical is the trivial algebra $1A$. Using Lemma~\ref{ideal},  it is straightforward to prove that the radical is a minimal ideal. 

Now assume $ch \:\F= 3$. Then the radical of the form is three dimensional, with generators
$$
 a_0- a_2, \:\:  a_0- a_1, \:\:  s_{0,1}
$$
and it is straightforward to see that it contains properly the non-zero ideals $\F( a_0+ a_1+ a_2+ s_{0,1})$, $\F( a_0+ a_1+ a_2- s_{0,1})$, and $\F( a_0+ a_1+ a_2)$. Claim $(4)$ follows.
\end{proof}

\begin{lemma}\label{simple}
Let $\F$ be a field of characteristic other than $2$ and $\bt\in \F\setminus \{0,1, \frac{1}{2}\}$.
\begin{enumerate}
\item The algebra $3C(2\bt)$ has $ (\lmu, \lmd)=(\bt, 1)$. It is simple except when $\bt=-\frac{1}{2}$, in which case it has a non trivial quotient of dimension $2$, denoted by $3C(-\frac{1}{2})^\times$ (see~\cite[(3.4)]{HRS2}).
\item The algebra $3C(\bt)$ has $\lmu=\lmd=\frac{\bt}{2}$. It is simple except when $\beta=-1$, in which case it has a non trivial quotient of dimension $2$, denoted by $3C(-1)^\times$ (see~\cite[(3.4)]{HRS2}). 
\item The algebra $4J(2\bt,\bt)$ has $  (\lmu, \lmd)=(\bt, 0)$. It is simple except when $\beta= -\frac{1}{4}$, in which case it has a unique non trivial quotient over the ideal generated by $ a_{-1}+  a_0+  a_1+  a_2 -\frac{2}{\bt} s_{0,1}$, which is a simple algebra of dimension $4$.
\item The algebra $6J(2\bt,\bt)$  has $  (\lmu, \lmd)=\left (\bt, \frac{\bt}{2}\right )$. It  is simple provided 
$\beta \not \in \{ 2, -\frac{1}{7}\}$.
 \begin{quote}
  If $\bt=-\frac{1}{7}$, the algebra has a unique non trivial quotient, over the ideal $\F( a_{-2}+  a_{-1}+  a_0+  a_1+  a_2+  a_3 -\frac{1}{\bt} s_{0,3} -\frac{2}{\bt}  s_{0,1})$, which is a simple algebra of dimension $7$. \\
  If $\bt=2$, then it has a unique non trivial quotient  which is isomorphic to $3C(2\bt)$.
 
  \end{quote}
\end{enumerate}
\end{lemma}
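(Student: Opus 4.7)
The plan is to handle each of the four algebras $2A(\bt)$, $3C(\bt)$, $V_5(\bt)$, $V_8(\bt)$ by the same template used to prove Lemma~\ref{algebra3A} for $3A(2\bt,\bt)$. For each algebra, the multiplication table is either given explicitly in the present paper (for $V_5$, $V_8$ following \cite{DM}) or in the cited references ($2A$, $3C(\bt)$ from \cite{HRS2}), so the scalars $\lmu=\lambda_{\bar a_0}(\bar a_1)$ and $\lmd=\lambda_{\bar a_0}(\bar a_2)$ can be read off directly by expanding $\bar a_1$ and $\bar a_2$ as combinations of the basis vectors and recording the coefficient of $\bar a_0$. In the case of $2A$ the dihedral orbit collapses ($\bar a_2=\bar a_0$), which immediately forces $\lmd=1$, and $\lmu=\bt$ follows from the defining product $\bar a_0\bar a_1=\bt(\bar a_0+\bar a_1)$.

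To analyse simplicity and quotients I would proceed exactly as in Lemma~\ref{algebra3A}. Each of the four algebras admits a Frobenius form (known for the Jordan-type ones, and routine to verify for $V_5$, $V_8$ from the tables in \cite{DM}); computing the determinant $\Delta(\bt)$ of the Gram matrix with respect to a natural spanning set gives a rational expression in $\bt$ whose zeros are exactly the exceptional values listed: $\bt=-\tfrac12$ for $2A$, $\bt=-1$ for $3C(\bt)$, $\bt=-\tfrac14$ for $V_5$, and $\bt\in\{2,-\tfrac17\}$ for $V_8$. Away from these zeros the radical is trivial; on the other hand, the projection graph contains the edge $\{\bar a_0,\bar a_1\}$ because $(\bar a_0,\bar a_1)=\lmu\neq 0$ in every case, so by \cite[Corollary~4.11 and Corollary~4.15]{KMS} every proper ideal lies in the radical, yielding simplicity for generic $\bt$.

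For each exceptional $\bt$, the radical can be read off explicitly from the kernel of the Gram matrix: the vectors listed in the statement (for instance $\bar a_{-1}+\bar a_0+\bar a_1+\bar a_2-\tfrac{2}{\bt}\bar s_{0,1}$ for $V_5(-\tfrac14)$, and the analogous alternating sum for $V_8(-\tfrac17)$) are verified to lie in the radical, and Lemma~\ref{ideal} together with closure under the dihedral group from Lemma~\ref{action} shows that their linear span is an ideal, hence equals the radical by maximality. The resulting quotient is then identified with a previously known algebra by comparing structure constants: in the $2A$ and $3C$ cases one recovers $3C(-\tfrac12)^\times$ and $3C(-1)^\times$ of \cite[(3.4)]{HRS2}, and in $V_8$ at $\bt=2$ the quotient is the $2$-dimensional algebra with $\bar a_0\bar a_1=\bt(\bar a_0+\bar a_1)$, which is $2A(\bt)$.

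The main obstacle is the $V_8$ analysis at $\bt=-\tfrac17$: beyond pinpointing the $1$-dimensional radical, we must show that the resulting $7$-dimensional quotient is itself simple. This forces one to re-run the Frobenius-form/projection-graph argument inside the quotient, checking that the induced form is non-degenerate there and that no further specialisation produces new radical elements; because the quotient is not of any standard type classified before, this verification has to be carried out by direct computation of the new Gram matrix. A subsidiary technical point throughout is confirming uniqueness of the non-trivial quotient at each exceptional $\bt$, which reduces to verifying that the span of the listed radical generators admits no proper $\bar a_0$- and $\bar a_1$-invariant subspace, a check made manageable by the low dimension of the radical in every case.
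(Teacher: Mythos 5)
Your proposal follows essentially the same route as the paper: for $V_5(\bt)$ and $V_8(\bt)$ the paper uses the Frobenius form inherited from the ambient Matsuo algebra, the connectedness of the projection graph together with \cite[Corollary~4.15 and Corollary~4.11]{KMS} to confine every proper ideal to the radical, and explicit Gram determinants ($2(2\bt-1)^2(4\bt+1)$ for $V_5$ and $-16(2\bt-1)^2(\bt-2)^5(7\bt+1)$ for $V_8$) to locate the exceptional values of $\bt$; parts (1) and (2) are simply quoted from \cite[(3.4)]{HRS2} rather than recomputed. Two points should be corrected or simplified. First, at $\bt=2$ the radical of $V_8(\bt)$ is $5$-dimensional (witness the factor $(\bt-2)^5$ in the determinant), spanned by $\bar a_0-\bar a_2$, $\bar a_0-\bar a_{-2}$, $\bar a_1-\bar a_{-1}$, $\bar a_1-\bar a_3$, $\bar s_{0,1}-\bar s_{0,3}$, so the quotient is $3$-dimensional; your description of it as a $2$-dimensional algebra with $\bar a_0\bar a_1=\bt(\bar a_0+\bar a_1)$ is inconsistent with part (1) of the very lemma you are proving, where $2A(\bt)$ admits a $2$-dimensional proper quotient and is therefore itself $3$-dimensional. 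Second, the simplicity of the $7$-dimensional quotient of $V_8(-\frac{1}{7})$ (and of the $4$-dimensional quotient of $V_5(-\frac{1}{4})$) does not require re-running the Frobenius-form argument inside the quotient: since every proper ideal of the original algebra is contained in the $1$-dimensional radical, the radical is the unique nonzero proper ideal, so the quotient has no nonzero proper ideals at all. The same observation gives uniqueness of the non-trivial quotient in those cases for free; only at $\bt=2$, where the radical is $5$-dimensional, does uniqueness require the extra check (done in the paper via Lemma~\ref{ideal}) that the radical is a minimal ideal.
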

\begin{proof}
(1) and (2) are proved in~\cite[(3.4)]{HRS2}.
Let $V\in \{4J(2\bt,\bt), 6J(2\bt,\bt)\}$. Then, $V$ is a subalgebra of a Matsuo algebra and so it is endowed of a Frobenius form. As in the proof of Lemma~\ref{algebra3A}, 
every proper ideal of $V$  is contained in the radical of the form. When $V=4J(2\bt,\bt)$, the Gram matrix, with respect to the basis $ a_{-1},  a_0,  a_1,  a_2, -\frac{2}{\bt} s_{0,1}$, is
$$
2\left ( 
\begin{array}{ccccc}
1&\beta &0 &\beta &2\beta\\
 \beta& 1 &\beta &0 &2\beta \\
  0&\beta &1&\beta &2\beta\\ 
 \beta & 0 & \beta & 1 & 2\beta\\
   2\beta&  2\beta& 2\beta & 2\beta & 2
\end{array}
\right ).
$$
The determinant of this matrix is $2(2\beta-1)^2(4\beta+1)$ and so, if $\bt\neq -\frac{1}{4}$ we get the thesis. If $\bt =-\frac{1}{4}$, the radical of the form is the $1$-dimensional ideal $\F( a_{-1}+  a_0+  a_1+  a_2 -\frac{2}{\bt} s_{0,1})$.
When $V=6J(2\bt,\bt)$, the Gram matrix, with respect to the basis $ a_{0},  a_{2},  a_{-2},  a_1,  a_{-1},  a_3, -\frac{1}{\bt} s_{0,3}, -\frac{2}{\bt}  s_{0,1}$ given in~\cite[Table~8]{DM}, is
$$
\left ( 
\begin{array}{cccccccc}
2&\beta &\beta&2\beta&2\beta &2\beta  &2\beta &4\beta\\
 \beta& 2 &\beta &2\beta&2\beta &2\beta  &2\beta &4\beta\\
  \beta&\beta &2&2\beta&2\beta &2\beta  &2\beta &4\beta\\ 
 2\beta&2\beta &2\beta  & 2& \beta & \beta & 2\beta& 4\beta\\
  2\beta&2\beta &2\beta  & \beta& 2 & \beta & 2\beta& 4\beta\\
   2\beta&2\beta &2\beta  & \beta& \beta & 2 & 2\beta& 4\beta\\
  2\beta&2\beta &2\beta  & 2\beta& 2\beta & 2\beta & 2& 2\beta\\
  4\beta&4\beta &4\beta  & 4\beta& 4\beta & 4\beta & 2\beta& 4+2\beta\\    
\end{array}
\right ).
$$
The determinant of this matrix is $-16(2\beta-1)^2(\beta-2)^5(7\beta+1)$ and so, if $\bt\not \in \{2,-\frac{1}{7}\}$, the algebra is simple. If $\bt=-\frac{1}{7}$, then the radical of the form is the $1$-dimensional ideal $\F( a_{-2}+  a_{-1}+  a_0+  a_1+  a_2+  a_3 -\frac{1}{\bt} s_{0,3} -\frac{2}{\bt}  s_{0,1})$ and the result follows. Finally suppose $\bt=2$. Then the radical of the form is $5$-dimensional with basis
$$
 a_0- a_2, \: a_0- a_{-2}, \: a_1- a_{-1},\:  a_1- a_3, \: s_{0,1}- s_{0,3}
$$
and the quotient over the radical is an algebra of type $3C(2\bt)$. Using Lemma~\ref{ideal},  it is straightforward to prove that the radical is a minimal ideal. 
\end{proof}

\begin{table}
{\Small
  $$
\begin{array}{c|c|c}
\mathrm{ Basis } & \mathrm{ Products } & \mathrm{ Form }\\
\hline
 a_0,   a_1,  a_2,  a_3,  s & \begin{array}{l}  a_i^2= a_i\\
 a_i a_{i+1}= s+\bt( a_i+ a_{i+1}) \:\:\:\:\:\:\:\:\:\:\:\:\:\mathrm{ where } \: a_4:= a_0\\
 a_i a_{i+2}=4\bt  s-\frac{2\bt-1}{2}( a_i+ a_{i+2})\: \mathrm{ where } \: a_5:= a_1\\
 a_i s=\bt  s+\frac{\bt^2}{2}( a_{i-1}+ a_{i+1}) \\
 s^2=\frac{3\bt-1}{8}(4 s- a_3 - a_{0}- a_{1} -  a_{2})
\end{array}&
\begin{array}{l}
( a_i,  a_i)=1\\
( a_i,  a_{i+1})=\bt +\frac{1}{4}\\
( a_i,  a_{i+2})=\bt\\
( a_i,  s)=\frac{1}{4}\bt\\
( s,  s)=\frac{1}{8}\bt
\end{array} \\
\hline
\end{array} 
$$
}
 \caption{Multiplication table for the algebra $4Y(2\bt,\bt)$}\label{tableY5} 
  \end{table}

\begin{lemma}\label{lemmaY5}
Let $\F$ be a field of characteristic other than $2$ and $\bt\in \F$ such that $4\bt^2+2\bt-1=0$. The algebra $4Y(2\bt,\bt)$ is a  simple $2$-generated primitive symmetric Frobenius axial algebra of Monster type $(2\bt,\bt)$, with  $\lmu=\bt+\frac{1}{4}$ and $\lmd=\bt$. 
\end{lemma}
\begin{proof}
All the properties are easily verified. Note that the Frobenius form is defined by $( a_i,  a_i)=1$, $( a_i,  a_j)=\lmu$, for $i,j\in \{0,1, 2,3\}$ such that $i-j\equiv_2 1$, $( a_0,  a_2)=( a_1,  a_3)=\lmd$, and $( a_i,  s)=\frac{1}{4}\bt$ for  $i\in \{0,1, 2,3\}$. Then the Frobenius is the one specified in Table~\ref{tableY5}, and its Gram matrix has determinant 
 $\frac{1}{32}\bt(\bt-1)^2(2\bt-1)(2\bt+3)$.  Note that, for $\bt=-\frac{3}{2}$, condition $4\bt^2+2\bt-1=0$ implies $ch \:\F=5$ and $\bt=1$. Hence the Frobenius form is always non-degenerate and  
 the result follows with the argument already used to prove Lemma~\ref{simple}.
\end{proof}

The next lemma is useful to deal the case of algebras of dimension at most $3$.
\begin{lemma}\label{nuovo1A}
Let $V$ be a symmetric primitive axial algebra of Monster type $(\al, \bt)$ over a field $\F$, generated by two axes $  a_0$ and $  a_1$. Suppose there exists $A\in \F$ such that 
$$
 a_2= a_{-1}+A( a_0- a_1).
$$
Then, one of the following holds 
\begin{enumerate}
\item $A=0$ and   $ a_2= a_{-1}$; 
\item $A=1$, $ a_1= a_{-1}$ and $V$ is spanned by $ a_0,  a_1,  s_{0,1}$. 
\end{enumerate}
\end{lemma}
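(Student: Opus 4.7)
The plan is to extract enough information from the hypothesis $\bar a_2 - \bar a_{-1} = A(\bar a_0 - \bar a_1)$ to first pin down the scalar $A$, and then, in the non-trivial case, to force $\bar a_1 = \bar a_{-1}$. Both pieces of information will come from combining this linear relation with a suitable idempotency of an axis.

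First I would multiply the hypothesis by $\bar a_1$, writing each axis product in the form $\bar a_i\bar a_j = \bar s_{i,j} + \bt(\bar a_i+\bar a_j)$ and using the reductions provided by Lemma~\ref{invariances}: for $n=1$ every $\bar s_{r,r+1}$ equals $\bar s_{0,1}$, and for $n=2$ we have $\bar s_{-1,2} = \bar s_{1,2}$. A short cancellation then gives
\begin{equation*}
\bar s_{1,2} \;=\; (1-A)\,\bar s_{0,1} + A(1-2\bt)\,\bar a_1. \qquad (\ast)
\end{equation*}
Next I would square the hypothesis and impose $\bar a_2^2 = \bar a_2$. Expanding $(\bar a_{-1}+A(\bar a_0-\bar a_1))^2$, rewriting each product via the relevant $\bar s$, and then substituting $(\ast)$ in place of $\bar s_{1,2}$, every $\bar s_{0,1}$-term cancels and one is left with the single scalar identity
\begin{equation*}
A(1-A)(2\bt-1)(\bar a_0-\bar a_1) \;=\; 0.
\end{equation*}
Since $0$, $1$, $2\bt$, $\bt$ are pairwise distinguishable in $\F$, the scalar $2\bt-1$ is invertible, so either $A=0$ (giving conclusion~(1)) or $A=1$; the degenerate subcase $\bar a_0 = \bar a_1$ reduces the algebra to $\F\bar a_0$ and falls trivially under~(2).

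In the case $A=1$, I would apply the Miyamoto involution $\tau_1$ (fixing $\bar a_1$ and sending $\bar a_i$ to $\bar a_{2-i}$) to the hypothesis to obtain $\bar a_3 = 2\bar a_1-\bar a_{-1}$. Expanding the idempotency $\bar a_3^2=\bar a_3$ yields $\bar a_1\bar a_{-1} = \tfrac{1}{2}(\bar a_1+\bar a_{-1})$, hence $\bar s_{1,2} = (\tfrac{1}{2}-\bt)(\bar a_1+\bar a_{-1})$. Comparing with $(\ast)$ at $A=1$, which reads $\bar s_{1,2} = (1-2\bt)\bar a_1$, and cancelling the nonzero factor $\tfrac{1}{2}(1-2\bt)$, one forces $\bar a_1=\bar a_{-1}$. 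The hypothesis then gives $\bar a_2 = \bar a_0$, the whole dihedral orbit collapses to $\{\bar a_0,\bar a_1\}$, and in particular $\bar s_{0,3} = \bar s_{0,1}$; Proposition~\ref{span} then implies that $V$ is spanned by $\bar a_0$, $\bar a_1$, $\bar s_{0,1}$.

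The step I expect to be the main obstacle is the idempotency calculation in the second paragraph: a number of cross-terms involving $\bar s_{0,1}$, $\bar s_{1,2}$, and the axes $\bar a_0,\bar a_1,\bar a_{-1}$ must cancel simultaneously, and this works only when $(\ast)$ is substituted in the correct form. Once that identity is in hand, the remaining arguments are essentially mechanical.
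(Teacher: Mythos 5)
Your proof is correct, but it takes a genuinely different route from the paper's. The paper's argument is a single short computation: substituting the hypothesis into $\bar s_{0,2}=\bar a_0\bar a_2-\bt(\bar a_0+\bar a_2)$ and reducing via Lemma~\ref{invariances} gives $\bar s_{0,2}=(1-A)\bar s_{0,1}+A(1-2\bt)\bar a_0$, and since $\bar s_{0,2}$ is $\tau_1$-invariant while $\bar a_0^{\tau_1}=\bar a_2$, one reads off $A(1-2\bt)(\bar a_0-\bar a_2)=0$ at once; for $A\neq 0$ this forces $\bar a_2=\bar a_0$, then $\bar a_{-1}=\bar a_1$ by applying $f$, and the spanning claim follows. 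You instead avoid the Miyamoto-invariance of $\bar s_{0,2}$ entirely and extract the same information from idempotency of the non-generating axes: multiplying the hypothesis by $\bar a_1$ to get $\bar s_{1,2}=(1-A)\bar s_{0,1}+A(1-2\bt)\bar a_1$, then imposing $\bar a_2^2=\bar a_2$ to obtain $A(1-A)(1-2\bt)(\bar a_0-\bar a_1)=0$, and finally using $\bar a_3^2=\bar a_3$ in the case $A=1$ to force $\bar a_{-1}=\bar a_1$. I checked your three computations and they are all correct (in particular the claimed cancellation of all $\bar s_{0,1}$-terms in the squaring step does occur: the coefficient is $2A-2A(1-A)-2A^2=0$). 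What the paper's route buys is brevity --- one identity instead of three --- and it pins down the geometric collapse $\bar a_2=\bar a_0$ directly rather than first isolating the scalar $A$; what your route buys is that it only uses idempotency and the $s$-reductions, never the grading automorphism applied to $s_{0,2}$, and it cleanly separates the determination of $A$ from the collapse of the axes. Both proofs share the same mild blemish in the degenerate subcase $\bar a_0=\bar a_1$ (where $A$ is not actually determined by the hypothesis), and both implicitly use the standing assumption $2\bt\neq 1$, so neither is worse off there.
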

\begin{proof}
If $A=0$, the claim is trivial. Suppose $A\neq 0$. By the symmetries of the algebra, we get 
$$
 a_{-2}= a_{1}+A( a_0- a_{-1}) \: \mbox{ and }  a_{3}= a_{0}+A( a_1- a_{2}).
$$
By substituting the expression for $ a_2$ in the definition of $ s_{0,2}$ we get
$$
0= s_{0,2}- s_{0,2}^{\tau_1}=A(1-2\bt)( a_0- a_2).
$$
Then, since $\bt\neq 1/2$, we have $ a_{2}= a_0$ and, by the symmetry, $ a_{-1}= a_1$. By Lemma~4.3 in~\cite{FMS1}, (2) holds.
\end{proof}

\begin{proposition}\label{jordan}
Let $V$ be a symmetric primitive axial algebra of Monster type $(2\bt, \bt)$ over a field $\F$ of characteristic other than $2$, generated by two axes $  a_0$ and $  a_1$. If $V$ has dimension at most $3$, then either $V$ is an algebra of Jordan type $\bt$ or $2\bt$, or $18\bt^2-\bt-1=0$ in $\F$ and $V$ is isomorphic to the algebra $V_3(\bt)$. 
\end{proposition}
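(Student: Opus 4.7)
My plan is to reduce, via the symmetry of $V$, to the hypothesis of Lemma~\ref{nuovo1A}, and then match each resulting configuration against the solution list of Lemma~\ref{solutions}. If $\bar a_0 = \bar a_1$ then $V \cong 1A$, which is of Jordan type, so I assume $\bar a_0 \neq \bar a_1$ and $\dim V \in \{2, 3\}$.

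The first step is to show that $\bar a_2 - \bar a_{-1} \in \F(\bar a_0 - \bar a_1)$. Let $f$ denote the symmetry swapping $\bar a_0 \leftrightarrow \bar a_1$ (and hence $\bar a_{-1} \leftrightarrow \bar a_2$), and decompose $V = V^+ \oplus V^-$ into its $\pm 1$-eigenspaces for $f$. Since $\bar a_0 + \bar a_1$ and $\bar s_{0,1}$ lie in $V^+$, while $\bar a_0 - \bar a_1 \in V^-$ is nonzero, both $V^+$ and $V^-$ have dimension $\geq 1$. If $\dim V = 2$ the claim is immediate. If $\dim V = 3$ and $\dim V^+ = 1$, then $\bar s_{0,1} \in \F(\bar a_0 + \bar a_1)$, so $\bar a_0 \bar a_1 \in \F(\bar a_0 + \bar a_1)$; but then the subalgebra $\langle\langle \bar a_0, \bar a_1\rangle\rangle$ is only $2$-dimensional, contradicting $\dim V = 3$. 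Hence $\dim V^- = 1$, so $\bar a_2 - \bar a_{-1} \in V^- = \F(\bar a_0 - \bar a_1)$, giving $\bar a_2 = \bar a_{-1} + A(\bar a_0 - \bar a_1)$ for some $A \in \F$, and Lemma~\ref{nuovo1A} applies.

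By Lemma~\ref{nuovo1A}, either $\bar a_1 = \bar a_{-1}$ or $\bar a_2 = \bar a_{-1}$. In the first case, applying $f$ yields $\bar a_0 = \bar a_2$, so $\lmd = 1$; the pairs in Lemma~\ref{solutions} with $\lmd = 1$ are $(1,1), (0,1), (\bt, 1)$, giving $1A$, $2B$, $2A(\bt)$ and, when $\bt = -\tfrac{1}{2}$, the quotient $3C(-\tfrac{1}{2})^\times$---all of Jordan type by Lemma~\ref{simple}. In the second case, $\tau_0(\bar a_1) = \bar a_{-1} = \bar a_2$ together with the $\tau_0$-invariance of $\lambda_{\bar a_0}$ gives $\lmu = \lmd$; the pairs in $\mathcal S_3$ with $\lmu = \lmd$ are $(1,1)$, $(\tfrac{\bt}{2}, \tfrac{\bt}{2})$ and $\left(\tfrac{9\bt^2-2\bt}{2(4\bt-1)}, \tfrac{9\bt^2-2\bt}{2(4\bt-1)}\right)$, giving $1A$, $3C(\bt)$ (Jordan type), and $3A(2\bt,\bt)$. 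The last is $4$-dimensional; by Lemma~\ref{algebra3A} its only $3$-dimensional quotient is $V_3(\bt)$, arising precisely when $18\bt^2 - \bt - 1 = 0$. A direct computation using $18\bt^2 = \bt + 1$ verifies $\tfrac{9\bt+1}{4} = \tfrac{9\bt^2-2\bt}{2(4\bt-1)}$ exactly under this condition, matching the $V_3(\bt)$-parameter from Lemma~\ref{algebraV3}.

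I expect the main obstacle to be the dichotomy $\dim V^\pm = 1$ in the three-dimensional situation: ruling out $\dim V^+ = 1$ requires both the $f$-invariance of $\bar s_{0,1}$ (furnished by Lemma~\ref{action}) and the Peirce-closure observation that $\bar a_0 \bar a_1 \in \F(\bar a_0 + \bar a_1)$ forces $\langle\langle \bar a_0, \bar a_1\rangle\rangle$ to have dimension $2$. Once this reduction is in place, the remainder is a bookkeeping match against the classification of solutions in Lemma~\ref{solutions} together with the algebras described in Lemmas~\ref{algebraV3},~\ref{algebra3A},~\ref{simple}.
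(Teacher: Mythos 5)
Your first step---using the $\pm1$-eigenspace decomposition of $V$ under the symmetry $f$ to force $\bar a_2-\bar a_{-1}\in\F(\bar a_0-\bar a_1)$, and ruling out $\dim V^+=1$ because $\bar a_0\bar a_1\in\F(\bar a_0+\bar a_1)$ would make $\langle\langle\bar a_0,\bar a_1\rangle\rangle$ two-dimensional---is correct and is a genuinely different (and arguably cleaner) route into Lemma~\ref{nuovo1A} than the paper's. The paper instead splits on whether $0$ is an eigenvalue of $\ad_{\bar a_0}$: if it is, then $\dim V\leq 3$ leaves room for at most one of $\bt,2\bt$ in the spectrum and $V$ is of Jordan type; if not, then $\bar u_1=0$ gives an explicit formula for $\bar a_2$ of exactly the form needed for Lemma~\ref{nuovo1A}, and in the case $\bar a_2=\bar a_{-1}$ the same formula pins down $\bar s_{0,1}=\frac{\bt}{2}(\bar a_0+\bar a_1+\bar a_{-1})$, hence the multiplication table of $V_3(\bt)$, with $18\bt^2-\bt-1=0$ forced by the fusion law.

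The second half of your argument, however, has a genuine gap. After reducing to $\lmd=1$ or $\lmu=\lmd$, you match the admissible pairs from Lemma~\ref{solutions} against named algebras (``$(0,1)$ gives $2B$'', ``$(\frac{\bt}{2},\frac{\bt}{2})$ gives $3C(\bt)$'', etc.). But Lemma~\ref{solutions} only lists the possible pairs; the identification of which algebra realises a given pair is precisely the content of the proof of Theorem~\ref{thm2s}, which is carried out \emph{after} and \emph{by means of} Proposition~\ref{jordan} (for instance, the pair $(\frac{\bt}{2},\frac{\bt}{2})$ is shown to yield a quotient of $3C(\bt)$ only after a subcase, $ch\,\F=7$ and $\bt=2$, that is settled by invoking Proposition~\ref{jordan} itself). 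As written your proof is therefore circular. There are also two smaller problems: your enumeration of pairs is incomplete (you omit the extra pair in $\mathcal S_2$, whose coordinates can coincide or have second coordinate $1$ for special $\bt$, and the whole family $(\mu,1)$ when $\bt=\frac14$), and the claim that $V_3(\bt)$ is the \emph{only} three-dimensional quotient of $3A(2\bt,\bt)$ contradicts Lemma~\ref{algebra3A}, which exhibits $3C(\bt)$ as a three-dimensional quotient when $\bt=\frac15$ (and $3C(-1)$ in characteristic $3$)---harmless here since those are of Jordan type, but it signals that the matching step needs actual verification. Note that in your case $\bar a_1=\bar a_{-1}$ no parameter matching is needed at all: $\tau_0$ and $\tau_1$ then fix a spanning set of $V$, so the $\bt$-eigenspaces vanish and $V$ is of Jordan type $2\bt$ directly; a similarly direct derivation of the multiplication table (as in the paper) is what is missing in your case $\bar a_2=\bar a_{-1}$.
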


\begin{proof}
Since $V$ is symmetric, $\ad_{ a_0}$ and $\ad_{ a_1}$ have the same eigenvalues. Since $1$ is an eigenvalue for $\ad_{ a_0}$, it follows from the fusion law that if $0$ is an eigenvalue for $\ad_{ a_0}$, or $V$ has dimension at most $2$,  then $V$ is of Jordan type $\bt$ or $2\bt$. Let us assume that $0$ is not an eigenvalue for $\ad_{ a_0}$. Then $ u_1=0$ (recall the definition of $u_1$ in Section~\ref{table}) and we get
$$
 s_{0,1}=[\lmu(1-2\bt)-\bt] a_0+\frac{\bt}{2}( a_1+ a_{-1}).
$$
Since we have also $ u_1^f=0$ we deduce
$$
 a_2= a_{-1}+\left [\frac{2}{\bt}(\lmu(1-2\bt)-\bt)-1\right ]( a_0- a_1).
$$
Thus we can apply Lemma~\ref{nuovo1A}. If claim (2) or (3) holds, then $V$ has dimension at most $2$ and we are done. Suppose claim (1) holds, that is $\frac{2}{\bt}(\lmu(1-2\bt)-\bt)-1=0$. Then 
$$
 s_{0,1}=\frac{\bt}{2}( a_0+ a_1+ a_{-1})\:\mbox{ and }  a_0 a_1=\frac{3}{2}\bt( a_0+ a_1)+\frac{\bt}{2} a_{-1},
$$
whence we get that $V$ satisfies the multiplication given in Table~\ref{tableV3} and so it is isomorphic to a quotient of $V_3(\bt)$. Since by hypothesis $\bt\not \in \{1,\frac{1}{2}\}$, by Lemma~\ref{algebraV3}, $V_3(\bt)$ is simple and $V\cong V_3(\bt)$. The vector $v:=3\bt a_0+(2\bt-1)( a_{-1}+ a_1)$ is a $2\bt$-eigenvector for $\ad_{ a_0}$ and, in order to satisfy the fusion law (in particular $v\cdot v$ must be a $1$-eigenvector for $\ad_{ a_0}$), $\bt$ must be such that $18\bt^2-\bt-1=0$.
\end{proof}

\begin{theorem}\label{thm2s} 
Let $V$ be a primitive symmetric axial algebra of Monster type $(2\bt, \bt)$ over a field $\F$ of characteristic other than $2$, generated by two axes $ a_0$ and $ a_1$.  Then, one of  the following holds:
\begin{enumerate}
\item $V$ is an algebra of Jordan type $\bt$ or $2\bt$; 
\item $18\bt^2-\bt-1=0$ in $\F$ and $V$ is isomorphic to $V_3(\bt)$;
\item $V$ is isomorphic to  $3A(2\bt, \bt)$;
\item  $V$ is isomorphic to  $4J(2\bt,\bt)$;
\item  $V$ is isomorphic to  $6J(2\bt,\bt)$; 
\item $4\bt^2+2\bt-1=0$ in $\F$ and $V$ is isomorphic to  $4Y(2\bt,\bt)$;
\item $\bt=-\frac{1}{4}$ and $V$ is isomorphic to the quotient of $4J(2\bt,\bt)$ over the one-dimensional ideal generated by $ a_{-1}+ a_{0}+ a_{1}+ a_2-\frac{2}{\bt} s_{0,1}$;
\item  $\bt=-\frac{1}{7}$ and $V$ is isomorphic to the quotient of $6J(2\bt,\bt)$ over the one-dimensional ideal generated by  $ a_{-2}+ a_{-1}+ a_{0}+ a_{1}+ a_2+ a_{3}-\frac{2}{\bt} s_{0,1}-\frac{1}{\bt} s_{0,3}$.
\end{enumerate}
\end{theorem}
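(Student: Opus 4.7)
The plan is to combine Theorem~\ref{nec} with Lemma~\ref{solutions}, which classifies the common zeros of $q_1, q_2, q_3$ in the symmetric case, and with the structural analyses of the known symmetric algebras gathered in Lemmas~\ref{algebraV3}, \ref{algebra3A}, \ref{simple}, and~\ref{lemmaY5}. Since $V$ is symmetric, $\lmu=\lmf$ and $\lmd=\lmdf$, so by Theorem~\ref{nec} the pair $(\lmu,\lmd)=(\lambda_{\bar a_0}(\bar a_1),\lambda_{\bar a_0}(\bar a_2))$ must be a common zero of $q_1,q_2,q_3$. By Lemma~\ref{solutions} this pair lies either in $\mathcal S_3$, or in $\mathcal S_2\cup\mathcal S_3$ when the algebraic condition of Lemma~\ref{solutions}(2) holds, or in $\mathcal S_0\cup\{(\mu,1):\mu\in\F\}$ if $\bt=\tfrac14$.

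If $\dim V\le 3$, then Proposition~\ref{jordan} directly yields cases (1) and (2). Otherwise, I would identify each remaining pair with one of the known algebras enumerated at the beginning of Section~\ref{symmetric}: the pairs $(\tfrac{\bt}{2},\tfrac{\bt}{2})$, $(\bt,0)$, $(\bt,\tfrac{\bt}{2})$ correspond to $3C(\bt)$, $V_5(\bt)$, $V_8(\bt)$; the pairs $(1,1)$, $(0,1)$, $(\bt,1)$ to the Jordan-type algebras $1A$, $2B$, and $2A(\bt)$; the $\mathcal S_3$ pair $\bigl(\tfrac{\bt(9\bt-2)}{2(4\bt-1)},\tfrac{\bt(9\bt-2)}{2(4\bt-1)}\bigr)$ matches $3A(2\bt,\bt)$; and, under $4\bt^2+2\bt-1=0$, a short direct computation identifies the extra point of $\mathcal S_2$ with $(\bt+\tfrac14,\bt)$, the parameter pair for $Y_5(\bt)$. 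For each matched pair, Proposition~\ref{span}, Corollary~\ref{Rin} and \cite[Corollary~3.8]{FMS1} ensure that any two symmetric primitive axial algebras of Monster type $(2\bt,\bt)$ sharing the pair $(\lmu,\lmd)$ are quotients of a common universal algebra, so $V$ is a quotient of the matched known algebra; the simplicity and quotient analyses in Lemmas~\ref{algebraV3}, \ref{algebra3A}, \ref{simple}, and~\ref{lemmaY5} then promote the word ``quotient'' either to the known algebra itself or to one of the exceptional small quotients listed in cases (7)--(9).

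The main obstacle will be the degenerate value $\bt=\tfrac14$, where Lemma~\ref{solutions} produces a one-parameter family $\{(\mu,1):\mu\in\F\}$ rather than a finite list. For this case I would use the fact that $\lmd=1$ together with $\bar a_2^2=\bar a_2$, primitivity, and the eigenspace decomposition with respect to $\ad_{\bar a_0}$ to show that $\bar a_2-\bar a_0$ must lie in $V_0^{\bar a_0}+V_{2\bt}^{\bar a_0}+V_\bt^{\bar a_0}$ while simultaneously satisfying the idempotent constraint, and then apply the identity $\bar s_{0,2}=\bar s_{0,2}^{\tau_1}$ to invoke Lemma~\ref{nuovo1A}: the dihedral orbit collapses, placing $V$ in the scope of Proposition~\ref{jordan} and forcing one of the Jordan-type outcomes in case (1). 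A secondary, more routine, task is to verify the various characteristic-dependent specializations in the statement to confirm that no further non-simple quotients appear beyond those in cases (7)--(9).
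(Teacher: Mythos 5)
Your opening moves match the paper's: restrict to the symmetric solution set of the system via Theorem~\ref{nec} and Lemma~\ref{solutions}, and dispose of the low-dimensional cases with Proposition~\ref{jordan}. But the central step of your argument --- ``any two symmetric primitive axial algebras sharing the pair $(\lmu,\lmd)$ are quotients of a common universal algebra, so $V$ is a quotient of the matched known algebra'' --- is a non sequitur, and it is precisely where the bulk of the paper's proof lives. Two algebras being quotients of a common universal object does not make one a quotient of the other. For a given pair, the specialized universal algebra is a priori spanned by the eight vectors $\bar a_{-2},\dots,\bar a_3,\bar s_{0,1},\bar s_{0,3}$ with unknown additional relations, while the ``matched'' algebra (say $3C(\bt)$ for $(\tfrac{\bt}{2},\tfrac{\bt}{2})$, or $3A(2\bt,\bt)$) may be only $3$- or $4$-dimensional. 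To conclude that $V$ is a quotient of the known algebra you must show that the specialized universal algebra itself collapses onto it, i.e.\ that enough linear relations hold among the eight spanning vectors to reproduce its multiplication table. The paper extracts these relations case by case from the identities $\bar d_i=0$, $\bar D_i=0$, $\bar E=0$ of Lemma~\ref{u1u2}, from idempotency of $\bar a_{-2}$ and $\bar a_3$, and from Lemmas~\ref{a0s2f}--\ref{s3f}; this is where the exceptional subcases arise (e.g.\ $ch\,\F=7$, $\bt=2$ in the $(\tfrac{\bt}{2},\tfrac{\bt}{2})$ branch; $ch\,\F=31$, $\bt=9$ eliminated via $\bar E-\bar E^{\tau_0}\neq 0$ in the $(0,1)$ branch; the coincidences of pairs such as $\tfrac{\bt}{2}=\tfrac{9\bt^2-2\bt}{2(4\bt-1)}$ at $\bt=\tfrac15$, which make your ``matching'' non-injective). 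None of this is recoverable from the simplicity/quotient lemmas alone, which only become relevant \emph{after} one knows $V$ is a quotient of the named algebra.

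A secondary gap: your treatment of $\bt=\tfrac14$ with the one-parameter family $(\mu,1)$ is too optimistic. The paper's argument there hinges on the explicit identity $\bar D_2=0$, which yields $\tfrac{28}{3}(4\mu-1)(\bar a_1-\bar a_{-1})=0$; since $4\mu-1\neq 0$ this forces $\bar a_1=\bar a_{-1}$ \emph{except} in characteristic $7$, where the coefficient $\tfrac{28}{3}$ vanishes and a separate reduction (to $\bt=2$, $(\lmu,\lmd)=(\tfrac{\bt}{2},\tfrac{\bt}{2})$) is required. Your sketch via primitivity, $\bar a_2^2=\bar a_2$ and Lemma~\ref{nuovo1A} does not identify the relation that actually collapses the orbit, nor does it flag the characteristic-$7$ degeneration, so as written it would not close this case.
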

\begin{proof}
  By Theorem~\ref{nec}, $V$ is determined, up to homomorphic images, by the pair $ ( \lambda_1,  \lambda_2)$, which must be a solution of~(\ref{syst}).  By~\cite[Corollary~3.8 ]{FMS1} and Proposition~\ref{span}, $V$ is spanned on $F$ by the set $ a_{-2}$, $ a_{-1}$, $ a_{0}$, $ a_{1}$,$ a_{2}$,$ a_{3}$, $ s_{0,1}$,  and $ s_{0,3}$. 

 Assume first $ \lambda_1= \bt$. Then, by Lemma~\ref{s3f}, we get $ s_{1,3}= s_{2,3}= s_{0,3}$. If $( \lambda_1,  \lambda_2)=(\bt, \frac{\bt}{2})$, we see that the algebra satisfies the multiplication table of the algebra $6J(2\bt,\bt)$. Hence $V$ is isomorphic to a quotient of $6J(2\bt,\bt)$ and by Lemma~\ref{simple} we get that either (5) or (8) holds. Assume $\lmd\neq \frac{\bt}{2}$. We  compute 
 $$
 E=\frac{(2\bt-1)(2\lmd-\bt)}{4}\left [  s_{0,3}+\bt ( s_{0,1}- a_{-1}+ a_3)\right ],
$$
hence, since $(2\bt-1)(2\lmd-\bt)\neq 0$,  we get  $ s_{0,3}=\bt( a_{-1}- a_3- s_{0,1})$. 
Then, from the identity $ s_{0,3}- s_{0,3}^{\tau_1}=0$ we get $ a_3= a_{-1}$, and so $ s_{0,1}= s_{0,3}$ and $ a_{-2}= a_2$. Hence the dimension is at most $5$. If $( \lambda_1,  \lambda_2)= (\bt, 0 )$ we see that $V$ satisfies the multiplication table of $4J(2\bt,\bt)$ and either (4) or (7) holds. Finally, if $( \lambda_1,   \lambda_2)= (\bt, 1 )$, then $Z(\bt, \bt)=0$ and so by Lemma~\ref{a0s2f} and Equation~(\ref{s}) we get $ a_0 a_2= a_0$, that is $ a_0$ is a $1$-eigenvector for $\ad_{ a_2}$. By primitivity, this implies $ a_2= a_0$. Consequently, we have $ a_{-1}= a_2^f= a_0^f= a_1$ and from the multiplication table we see that $V$ is a quotient of the algebra $3C(2\bt)$. Thus $V$ an axial algebra of Jordan type $2\bt$.

Now assume $\lmu\neq \bt$. We have
\begin{eqnarray*}
\lefteqn{ D_1=\frac{-2(2\bt-1)(\lmu-\bt)}{\bt^2}\left [(\bt-1) \frac{}{}( a_{-2}- a_2) \right .}\\
& & \:\:\:\:\:\:\:\:\:\:\:\:\:\:\:\:\:\:\:\:\:\:\:\:\:\:\:\:\:\:\:\:\:\:\:\:\:\:\left .+\left (\frac{2\lmu(4\bt-1)(2\lmu-3\bt)}{\bt^2}+10\bt-3-2\lmd \right )( a_{-1}- a_1)\right ].
\end{eqnarray*}
By Lemma~\ref{u1u2}, $ D_1=0$. Since $\lmu\neq \bt$ and $\bt\not \in \{1,\frac{1}{2}\}$, the coefficient of $ a_{-2}$ in $ D_1$ is non zero and we get
\begin{equation}\label{am2}
 a_{-2}= a_2+\frac{1}{(\bt-1)}\left [\frac{2\lmu(4\bt-1)(2\lmu-3\bt)}{\bt^2}+10\bt-3-2\lmd \right ]( a_{1}- a_{-1}).
\end{equation}
Since $V$ is symmetric, the map $f$ swapping $ a_0$ and $ a_1$ is an algebra automorphism and so 
\begin{equation}\label{A3}
 a_{3}= a_{-1}+\frac{1}{(\bt-1)}\left [\frac{2\lmu(4\bt-1)(2\lmu-3\bt)}{\bt^2}+10\bt-3-2\lmd \right ]( a_{0}- a_2).
\end{equation}
It follows also $ s_{0,3}\in \langle  a_{-1},  a_0,  a_1,  a_2,  s_{0,1}\rangle$ and hence $V=\langle  a_{-1},  a_0,  a_1,  a_2,  s_{0,1}\rangle$. Moreover, equation $ d_1=0$ of Lemma~\ref{u1u2} becomes
\begin{equation}\label{diff1}
B( a_{-1}- a_2)+C( a_0- a_1)=0
\end{equation}
with $B$ and $C$ in $\F$.

Assume $\bt\neq \frac{1}{4}$ and $( \lambda_1, \lambda_2)=\left (\frac{(9\bt^2-2\bt)}{2(4\bt-1)}, \frac{(9\bt^2-2\bt)}{2(4\bt-1)}\right )$ (note that $\lmu\neq \bt$ since  $\bt\neq 0$). Then
the identities $ a_{-2}^2= a_{-2}$ and $ a_{3}^2= a_{3}$ give the equations
$$
\frac{2\bt(18\bt-5)}{(4\bt-1)}( a_2- a_{-1})=0 \mbox{ and } \frac{2(18\bt^2-9\bt+1)}{(4\bt-1)}( a_2- a_{-1})=0 .
$$
Since, in any field $\F$, the two polynomials $(18\bt-5)$ and $(18\bt^2-9\bt+1)$ have no common roots, we have $ a_2= a_{-1}$, whence $ a_{-2}= a_{1}$, and it is straightforward to see that $V$ satisfies the multiplication table of the algebra $3A(2\bt, \bt)$. Hence the result follows from Lemma~\ref{algebra3A}.

Now assume $( \lambda_1,  \lambda_2)=\left(\frac{\bt}{2}, \frac{\bt}{ 2}\right )$. Then we have 
$$
B=\frac{(\bt-1)(4\bt-1)}{2\bt} \mbox{ and } C=0.
$$
Moreover, the identity $ a_{-2}^2= a_{-2}$ give the equation
$$
\frac{(\bt^2+2\bt-1)}{\bt}( a_2- a_{-1})=0
$$
Suppose $B\neq 0$, or $(\bt^2+2\bt-1)\neq 0$: we get $ a_{-1}= a_2$ and consequently $ s_{0,2}= s_{0,1}$.  From the identity $ s_{0,2}- s_{0,1}=0$ we get 
$$
\frac{(5\bt-1)}{\bt}\left [ s_{0,1}+\frac{\bt}{2}( a_0+ a_1+ a_{-1})\right ]=0.
$$  
If $\bt\neq \frac{1}{5}$, it follows that the dimension is at most $3$ and in fact we get a quotient of the algebra $3C(\bt)$ which is an algebra of Jordan type $\bt$. If $\bt=1/5$, we have $\frac{\bt}{2}=\frac{(9\bt^2-2\bt)}{2(4\bt-1)}$ and $V$ is a quotient of the algebra $3A(2\bt,\bt)$. Finally, if $B=0=(\bt^2+2\bt-1)$, then we get $ch\: \F=7$ and $\bt=2$. In this case, by Equation~(\ref{am2}), we have $ a_{-2}= a_2+ a_1- a_{-1}$. Computing the vectors $ u_2$ and $ v_2$ with Lemma~\ref{lambdaa}, we get $ v_2=0$ and hence $0= a_2- \lambda_2  a_0-u_2-w_2=2( a_1- a_{-1})$. Thus, $ a_{-1}= a_1$ and $ a_2= a_0$, $V$ has dimension at most $3$ and the result  follows from Proposition~\ref{jordan}.

Assume  $( \lambda_1, \lambda_2)= (0,1)$. Then 
$$
B=C=\frac{-(9\bt-4)(4\bt^2+2\bt-1)}{\bt(\bt-1)},
$$ 
the identity  $ a_{3}^2= a_{3}$ gives the equation
\begin{equation}\label{2B}
\frac{5(2\bt-1)}{\bt(\bt-1)^2}\left [-2B( a_1- a_{-1})+(18\bt^3+27\bt^2-24\bt+4)( a_0- a_{2})\right ]=0, 
\end{equation}
and the identity $ D_2=0$ of Lemma~\ref{u1u2} gives 
$$
(4\bt-1)B( a_{-1}- a_1)=0
$$
Thus, if $(4\bt-1)B\neq 0$, we get 
$ a_{-1}= a_1$. Then,  $ a_2= a_{-1}^f= a_1^f= a_0$ and hence $ s_{0,2}=(1-2\bt) a_0$. Further, from the first equation of Lemma~\ref{a0s2f}, we also get $ s_{0,1}=-\bt( a_0+ a_1)$, whence $ a_0  a_1=0$. Thus $V$ is isomorphic to the algebra $2B$.
Suppose $B=0$. If  $(18\bt^3+27\bt^2-24\bt+4)\neq 0$, from Equation~(\ref{2B}), we get $ a_0= a_{2}$ and, by the symmetry, also $ a_1= a_{-1}$. Thus the dimension is at most $3$ and we conclude by Proposition~\ref{jordan}. If $(18\bt^3+27\bt^2-24\bt+4)= 0$, then it follows $ch \:\F=31$ and $\bt=9$. In this case $ E- E^{\tau_0}\neq 0$: a contradiction to Lemma~\ref{u1u2}.  Now assume $\bt=\frac{1}{4}$ and $B\neq 0$ (thus, in particular, $ch \:\F\neq 3$). From Equation~(\ref{am2}) we get $ a_{-2}= a_2+\frac{10}{3}( a_1- a_{-1})$. Further, $ v_2=0$ and we get $0= a_2- \lambda_2  a_0-u_2-w_2=\frac{10}{3}( a_1- a_{-1})$. Thus, if $ch \:\F\neq 5$, we get $ a_{-1}= a_1$ and $ a_2= a_0$; $V$ has dimension at most $3$ and the result  follows from Proposition~\ref{jordan}. If $ch \:\F=5$, then $B=-1$, Equation~(\ref{diff1}) gives $ a_{2}= a_{-1}-( a_0- a_1)$ and so Lemma~\ref{nuovo1A} implies that $V$ has dimension at most $3$ and we conclude as above.

Suppose $( \lambda_1, \lambda_2)=(1,1)$. Then
identity $ D_2=0$ is equal to
$$
\frac{2(4\bt^3-14\bt^2+11\bt-2)(9\bt^2-10\bt+2)(\bt-2)(2\bt-1)}{\bt^6}( a_{-1}- a_1)=0
  $$
Hence, if $(4\bt^3-14\bt^2+11\bt-2)(9\bt^2-10\bt+2)(\bt-2)\neq 0$, we get $ a_{-1}= a_1$,  by the symmetry, $ a_2= a_0$  and $V$ has dimension at most $3$ and we conclude by Proposition~\ref{jordan}. So let us assume 
\begin{equation}\label{condition}
(4\bt^3-14\bt^2+11\bt-2)(9\bt^2-10\bt+2)(\bt-2)= 0.
\end{equation}
Further,  we have 
$$
B=-\frac{1}{\bt^3}\left (36\bt^4-126\bt^3+127\bt^2-48\bt+6\right ) 
$$
and 
$$
C=-\frac{1}{\bt^4}(4\bt^2+2\bt-1)(9\bt^2-10\bt+2)(\bt-2).
$$
If $(4\bt^3-14\bt^2+11\bt-2)=0$, then $B$ and $C$ are not zero and by Lemma~\ref{nuovo1A}, $V$ has dimension at most $3$ and we conclude by Proposition~\ref{jordan}. Moreover, if $\bt=2$ then $(\lmu, \lmd)=(\frac{\bt}{2},\frac{\bt}{2})$ and we reduce to the previous case. Hence we assume 
$$
(9\bt^2-10\bt+2)= 0.
$$
Then $C=0$. If $B\neq 0$, from Equation~(\ref{diff1}) we get $ a_2= a_{-1}$ and by symmetry, $ a_1= a_{-2}$. Then, Equation~(\ref{am2}) becomes
$$
 a_{1}- a_{-1}=\frac{1}{(\bt-1)}\left [\frac{2(4\bt-1)(2-3\bt)}{\bt^2}+10\bt-5 \right ]( a_{-1}- a_1),
$$
whence, either $ a_1= a_{-1}$ and again $V$ has dimension at most $3$ and we conclude by Proposition~\ref{jordan}, or 
$$
\frac{1}{(\bt-1)}\left [\frac{2(4\bt-1)(2-3\bt)}{\bt^2}+10\bt-5 \right ]=-1
$$
that is
$$
\frac{(11\bt^3-30\bt^2+22\bt-4)}{\bt^2(\bt-1)}=0.
$$
It is now straightforward to check that the two polynomials $(9\bt^2-10\bt+2)$ and $(11\bt^3-30\bt^2+22\bt-4)$ have no common roots in any field of characteristic other than 2 and we get a contradiction.
We are now left to consider the case when $B=0$. Then the two polynomials $(9\bt^2-10\bt+2)$ and $B$ have a common root if and only if $ch \:\F\in \{3,7\}$ and the common root if $\bt=-1$. In both cases we get $(\lmu, \lmd)=(1,1)=\frac{(9\bt^2-2\bt)}{2(4\bt-1)}$ and so $V$ is a quotient of the algebra $3A(2\bt,\bt)$ as already proved.

Suppose now $\bt=\frac{1}{4}$ and $(\lmu, \lmd)=(\mu, 1)$, for $\mu\in \F\setminus \{0,1, \bt\}$. In particular, note that $ch \:\F\neq 3$, since $\bt\neq 1$. Then Equations~(\ref{am2}) and (\ref{A3}) become 
$$
 a_{-2}= a_2+\frac{10}{3}( a_{-1}- a_1) \: \mbox{ and }  a_{3}= a_{-1}+\frac{10}{3}( a_{2}- a_0)
$$
and 
 the identity $ D_2=0$ gives the relation
$$
\frac{28}{3}(4\mu-1)( a_{1}- a_{-1})=0.
$$
Since we are assuming $\lmu\neq \bt$, $(4\mu-1)\neq 0$. So, if $ch \:\F\neq 7$,  we get $ a_{1}= a_{-1}$. Hence $V$ ha dimension at most $3$ and we conclude by Proposition~\ref{jordan}. If $ch \:\F=7$, then $\bt=2$ and we conclude, with the same argument used above for the case when $(\lmu, \lmd)=(\frac{\bt}{2}, \frac{\bt}{2})$, $ch \:\F=7$,  $\bt=2$. 

Finally assume that $\bt\neq \frac{3}{8}$, $\bt\in \{\frac{1}{2}, \frac{1}{3}, \frac{2}{7}\}$ or $\bt$
satisfies Equation~(\ref{condition}), and $(\lmu, \lmd)=\left (\frac{(18\bt^2-\bt-2)}{2(8\bt-3)}, \frac{(48\bt^4-28\bt^3+7\bt-2)(3\bt-1)}{2\bt^2(8\bt-3)^2}\right )$. First of all, note that the only common solution of Equation~(\ref{condition}) and of the equation $2\bt^2+5\bt-2=0$ is $\bt=1$ when $ch \:\F=5$. Since we are assuming $\bt\neq 1$, under our hypotheses  $2\bt^2+5\bt-2$ is always non-zero in $\F$: in particular this implies $\lmu\neq \bt$. The identity $ D_2=0$ becomes
$$
\frac{(2\bt^2+5\bt-2)(4\bt^4+2\bt-1)(5\bt^2+\bt-1)(7\bt-2)(2\bt-1)}{\bt^5(8\bt-3)^2(\bt-1)}( a_1- a_{-1})=0.
$$
If $(4\bt^4+2\bt-1)(5\bt^2+\bt-1)(7\bt-2)\neq 0$ in $\F$, we deduce $ a_{1}= a_{-1}$ and, by symmetry, $ a_{0}= a_{2}$. Thus $V$ has dimension at most $3$ and we conclude by Proposition~\ref{jordan}.
If $(5\bt^2+\bt-1)=0$, then $\lmu=\lmd=\frac{\bt}{2}$ and we are in a case considered above.  If $(4\bt^4+2\bt-1)=0$, we get $\lmu=\bt+\frac{1}{4}$ and $\lmd=\bt$. From the identity $ E=0$ we get $ a_3= a_{-1}$, consequently $ a_{-2}= a_2$, and it follows that $V$ satisfies the multiplication table of the algebra $4Y(2\bt,\bt)$. Hence, by Lemma~\ref{lemmaY5},   (6)  holds. Finally assume $ch \:\F\neq 7$ and $\bt=\frac{2}{7}$. Then $\lmu=\lmd=\frac{4}{7}$ and, since $\bt\neq 1$, we have also $ch \:\F\neq 5$. From identity $ d_1=0$, we get  $ a_{-2}= a_2$ and consequently $ a_3= a_{-1}$. If further $ch \:\F\neq 3$, identity $ E=0$ implies $ a_{-1}= a_2$. Then $ a_0= a_1$, but $\lmu=\frac{4}{7}\neq 1$, a contradiction. Hence $ch \:\F=3$, $(\lmu, \lmd)=(\frac{\bt}{2},\frac{\bt}{2})$ and we conclude as in the case considered above. \end{proof}

\section{The non symmetric case}\label{ns}
In this section we deal with the nonsymmetric case and prove Theorem~\ref{thm2}. 
Let $V$ be generated by the two axes $ a_0$ and $ a_1$. We set  $V_e:=\langle \langle  a_0,  a_2\rangle \rangle$ and $V_o:=\langle \langle  a_{-1},  a_1\rangle \rangle$. As noticed at the end of Section~\ref{strategy}, $V_e$ and $V_o$ are symmetric, since the automorphisms  $\tau_1$ and $\tau_0$ respectively, swap their generating axes. Hence, from Theorem~\ref{thm2s} we get the possible values for the pair $( \lmd, \lmdf)$ and the structure of those subalgebras. Note that $V$ is symmetric if and only if $\lm=\lmf$ and $\lmd=\lmdf$ in $\F$. 

\begin{lemma}\label{8dim}
If $V$ has dimension $8$, then $V\cong 6J(2\bt,\bt)$.
\end{lemma}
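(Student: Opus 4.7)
The plan is to apply the classification of symmetric 2-generated primitive axial algebras of Monster type (Theorem~\ref{thm2s}) to the subalgebras $V_e$ and $V_o$, and to show that the only configuration compatible with $\dim V=8$ forces the parameter quadruple $(\lmu,\lmf,\lmd,\lmdf)=(\bt,\bt,\frac{\bt}{2},\frac{\bt}{2})$, which is that of $V_8(\bt)$.

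Since $\dim V=8$, the spanning set from Proposition~\ref{span} is a basis of $V$; in particular $\bar a_{-2},\bar a_0,\bar a_2$ are linearly independent, so $\dim V_e\geq 3$, and similarly $\dim V_o\geq 3$. The first parameter of $V_e$, viewed as a 2-generated symmetric algebra with generators $\bar a_0,\bar a_2$, equals $\lambda_{\bar a_0}(\bar a_2)=\lmd$; since the automorphism $\tau_0$ sends $\bar a_1$ to $\bar a_{-1}$, one obtains analogously that the first parameter of $V_o$ is $\lambda_{\bar a_{-1}}(\bar a_1)=\lmdf$. By Lemma~\ref{a3}, $\bar a_4$ and $\bar a_{-3}$ can be written as explicit linear combinations of basis elements, with coefficients polynomial in $(\lmu,\lmf,\lmd,\lmdf)$. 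The requirement that $\bar a_4$ lie in $V_e$ and $\bar a_{-3}$ lie in $V_o$ forces the coefficients of $\bar a_{\pm 1}$, $\bar a_{\pm 3}$, $\bar s_{0,1}$, and $\bar s_{0,3}$ (appearing after expansion via Lemma~\ref{a0s2f}) to vanish, producing explicit polynomial constraints on $(\lmu,\lmf,\lmd,\lmdf)$.

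The core of the proof is a case analysis over the admissible types for $V_e$ and $V_o$ listed in Theorem~\ref{thm2s}. For each type of dimension strictly less than $8$, one checks that the resulting constraints either yield additional linear relations in $B$—contradicting $\dim V=8$—or are incompatible with every admissible type of $V_o$. The only surviving case is $V_e\cong V_o\cong 3C(\bt)$: the relation $\bar a_4=\bar a_{-2}$ inside $V_e\cong 3C(\bt)$ is equivalent via Lemma~\ref{a3} to $Z(\lmu,\lmf)=0$ together with $\lmd=\frac{\bt}{2}$, and analogously $\bar a_{-3}=\bar a_3$ inside $V_o\cong 3C(\bt)$ yields $Z(\lmf,\lmu)=0$ and $\lmdf=\frac{\bt}{2}$. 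Subtracting the two $Z$-equations gives $\lmu=\lmf$, and re-substituting forces $\lmu=\lmf=\bt$. A direct comparison with the multiplication table of $V_8(\bt)$ in~\cite{DM} then shows $V\cong V_8(\bt)$.

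The main obstacle I expect is the case analysis itself, in particular the sporadic small-$\bt$ and small-characteristic coincidences where distinct algebras in Theorem~\ref{thm2s} share parameter pairs—the same phenomenon that drives the lengthy case splits inside the proof of Theorem~\ref{thm2s}. A secondary subtlety lies in exploiting the additional identities in Lemma~\ref{u1u2} and Lemma~\ref{polinomi}, rather than the parameters of $V_e$ and $V_o$ alone, to rule out the few sporadic configurations where $(\lmu,\lmf)$ is only weakly constrained by the symmetric subalgebras.
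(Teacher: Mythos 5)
Your strategy---pin down the symmetric subalgebras $V_e=\langle\langle \bar a_0,\bar a_2\rangle\rangle$ and $V_o=\langle\langle \bar a_{-1},\bar a_1\rangle\rangle$ via Theorem~\ref{thm2s} and run a case analysis---is the method the paper uses for the \emph{non-symmetric} algebras of dimension less than $8$ in Section~\ref{ns}, but as a proof of this lemma it has a genuine gap: the case analysis, which is the entire content of the argument, is asserted rather than carried out. The claim that ``the only surviving case is $V_e\cong V_o\cong 3C(\bt)$'' is precisely what needs proof, and it is not routine: several algebras in Theorem~\ref{thm2s} share their first parameter with $3C(\bt)$ for sporadic values of $\bt$ (e.g.\ $3A(2\bt,\bt)$ has parameter $\frac{\bt(9\bt-2)}{2(4\bt-1)}$, which equals $\frac{\bt}{2}$ exactly when $\bt=\frac15$), and eliminating the possibilities $V_5(\bt)$ and $Y_5(\bt)$ requires arguments of the type of Lemma~\ref{evenV5}. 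There is also a local flaw in the mechanism you propose for extracting constraints: ``the requirement that $\bar a_4$ lie in $V_e$'' forces nothing, since $\bar a_4=\bar a_0^{\tau_2}\in V_e$ automatically; what Lemma~\ref{a3} actually yields is that $Z(\lmu,\lmf)(\bar a_{-1}-\bar a_3)\in V_e$, and you cannot conclude that this coefficient vanishes without first knowing that $\dim V_e=3$ (or that $V_e$ meets $\langle \bar a_{-1},\bar a_1,\bar a_3\rangle$ trivially)---which is again the content of the missing case analysis. Finally, your closing step ($Z(\lmu,\lmf)=Z(\lmf,\lmu)=0$ implies $\lmu=\lmf=\bt$) breaks down at $\bt=\frac14$, where $Z(x,x)\equiv 0$; there you must instead fall back on the symmetry of $V$ and Theorem~\ref{thm2s}.

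The paper's proof is much shorter and bypasses the subalgebras entirely. Since $\dim V=8$, the eight spanning vectors of Proposition~\ref{span} form a basis, so the relation $\bar d_1=0$ of Lemma~\ref{u1u2}, expanded in that basis, forces every coefficient to vanish; the coefficients of $\bar a_{-2}$, $\bar a_3$ and $\bar s_{0,1}$ give three explicit polynomial equations in $(\lmu,\lmf,\lmd,\lmdf)$ whose common solutions satisfy $\lmu=\lmf$ and $\lmd=\lmdf$. Hence $V$ is symmetric and Theorem~\ref{thm2s} immediately gives $V\cong V_8(\bt)$ as the only $8$-dimensional entry in the list. If you want to rescue your approach, the missing ingredient is exactly an identity of this kind that can be compared coefficient-by-coefficient against the basis, rather than an identification of the isomorphism types of $V_e$ and $V_o$.
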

\begin{proof}
Suppose $V$ has dimension $8$. Then the generators $ a_{-2},  a_{-1},  a_0,  a_1,  a_2,  a_3,  s_{0,1}$, and $ s_{0,3}$ are linearly independent. We express $ d_1$ defined before Lemma~\ref{u1u2} as a linear combination of the basis vectors and since $ d_1=0$ in $V$, every coefficient must be zero.
In particular, considering the coefficients of $ a_{-2},  a_3$, and $ s_{0,1}$ we get, respectively, the equations
\begin{eqnarray*}
&&\frac{(6\bt-1)}{\bt}\lmu +\frac{(2\bt-2)}{\bt}\lmu^f-(8\bt-3)=0\\
&&\frac{(2\bt-2)}{\bt}\lmu +\frac{(6\bt-1)}{\bt}\lmu^f-(8\bt-3)=0\\
&& \frac{8}{\bt^2}(\lmu^2-{\lmf}^2)-\frac{24}{\bt}(\lmu-\lmf)-\frac{2}{\bt}(\lmd-\lmdf)=0
\end{eqnarray*}
whose common solutions have $\lmu=\lmf$ and $\lmd=\lmdf$. Hence $V$ is symmetric and the result follows from Theorem~\ref{thm2s}.  
\end{proof}

\begin{lemma}\label{evenV8(-1/7)}
If $V$ is non-symmetric, then $V_e$ and $V_o$ are not isomorphic to the seven dimensional quotient of $6J(-\tfrac{2}{7}, -\tfrac{1}{7})$.
\end{lemma}
\begin{proof}
It is enough to show that the claim holds for $V_o$. Let us assume by contradiction that 
$V_o$ is isomorphic to seven dimensional quotient of $6J(-\tfrac{2}{7}, -\tfrac{1}{7})$. By Lemma~\ref{8dim}, $V$ has dimension smaller than $8$, hence $V=V_e$ and hence has basis $a_{-5}, a_{-3}, a_{-1}, a_1, a_3, a_5, s_{1,2}$ and the product  with respect to this basis is known (see~\cite[Table~8]{DM}). Let $a_0=x_{-5}a_{-5} +x_{-3}a_{-3}+x_{-1}a_{-1}+ x_{1}a_{1} +x_{3}a_{3}+x_{5}a_{5}+zs_{1,2}$ be the decomposition of $a_0$ with respect to this basis. Since $a_0^{\tau_0}=a_0$, and $\tau_0$ swaps $a_{-i}$ and $a_i$, for $i\in \{1,3,5\}$ and fixes $s_{1,2}$, must be $x_{-i}=x_i$, for $i\in \{1,3,5\}$. Moreover, $a_1-a_{-1}$ and $a_3-a_{-3}$ are $\bt$-eigenvectors for $a_0$, and so we have
\begin{eqnarray*}
\lefteqn{0=14a_0(a_1-a_{-1})-\bt(a_1-a_{-1})=}\\
&=& (x_3+x_5-2z)(a_5-a_{-5})+(x_3+x_5)(a_{-3}-a_3)\\
&&+ (14x_1-3x_3-3x_5-2z+2)(a_1-a_{-1})
  \end{eqnarray*}        
and, since all the coefficients must be zero, we get  $x_5=-x_3$, $z=0$,  and $x_1=-\tfrac{1}{7}$. Then
\begin{eqnarray*}
\lefteqn{0=14a_0(a_3-a_{-3})-\bt(a_3-a_{-3})=}\\
&=&(x_3+\tfrac{1}{7})(a_{1}-a_{-1}+a_5-a_{-5})- 17(x_3+\tfrac{1}{7})(a_3-a_{-3})
 \end{eqnarray*} 
 whence $x_3=-\frac{1}{7}$ and $a_0=\frac{1}{7}(a_{-5}-a_{-3}-a_{-1}-a_1-a_3+a_5)$. But then $a_0^2\neq a_0$, a contradiction.
\end{proof}

Lemma~\ref{8dim}, Lemma~\ref{evenV8(-1/7)} and Theorem~\ref{thm2s} imply that if $V$ is non-symmetric, then the dimensions of the even subalgebra and of the odd subalgebra are both at most $5$. As a consequence, from Lemma~\ref{a3} we derive some relations between the odd and even subalgebras.

\begin{lemma}\label{a4=am4}
If $ a_{-3}= a_5$, then 
\begin{eqnarray}\label{ast1}
\lefteqn{ Z(\lmf, \lmu)( a_0- a_{2}+ a_{-2}- a_4)=}\\
&=&\frac{1}{\bt}\left [2Z(\lmf, \lmu)\left (\lmf-\bt\right )-\left (\lmdf-\bt\right )\right ]( a_{-1}- a_{3}). \nonumber 
\end{eqnarray}
If  $ a_{-4}= a_4$, then 
\begin{eqnarray}\label{ast2}
\lefteqn{Z(\lmu, \lmf)( a_{-1}- a_3+ a_{-3}- a_1)=}\\
&=&\frac{1}{\bt}\left [2Z(\lmu, \lmf)\left (\lmu-\bt\right )-\left (\lmd-\bt\right )\right ]( a_{-2}- a_2). \nonumber 
\end{eqnarray}
If $ a_{-3}= a_3$, then 
\begin{eqnarray}\label{ast3}
\lefteqn{2Z(\lmf, \lmu)( a_{2}- a_{-2})=}\\
&=&\frac{1}{\bt}\left [4Z(\lmf, \lmu)\left (\lmf-\bt\right )-\left (\lmdf-\bt\right )\right ]( a_{1}- a_{-1}). \nonumber 
\end{eqnarray}
If $ a_{-2}= a_4$, then 
\begin{eqnarray}\label{ast4}
\lefteqn{2Z(\lmu, \lmf)( a_{-1}- a_{3})=}\\
&=&\frac{1}{\bt}\left [4Z(\lmu, \lmf)\left (\lmu-\bt\right )-\left (\lmd-\bt\right )\right ]( a_{0}- a_{2}). \nonumber 
\end{eqnarray}
If $ a_{-2}= a_4$ and $ a_3= a_{-1}$, then 
\begin{eqnarray}\label{ast5}
\lefteqn{2\left [\bt Z(\lmu, \lmf)-2(\lmf-\bt)\right ]( a_{2}- a_{-2})=}\\
&=&\frac{1}{\bt^2}\left [4(2\bt-1)\bt Z(\lmu, \lmf)(\lmf-\bt)-8\bt(\lmu-\lmf)\left ( 2\bt-\lmu-\lmf \right ) \right . \nonumber \\
&&\left .+2(2\bt-1)^2(\lmf-\bt)-2\bt^2 (\lmd-\lmdf)\right ]( a_{1}- a_{-1}). \nonumber 
\end{eqnarray}
\end{lemma}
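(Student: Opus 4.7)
The strategy is uniform across the five identities: combine the explicit formulas of Lemma~\ref{a3} for $a_4$ and $a_{-3}$ with their images under the dihedral group $T=\langle\tau_0,\tau_1,f\rangle$ acting on $\Vo$, then impose the given hypothesis in $V$ and collect coefficients. The first four identities are essentially single-formula manipulations; the last additionally requires an input from Lemma~\ref{u1u2}.

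For (\ref{ast1}), apply $\tau_1$ (which sends $a_j\mapsto a_{2-j}$ and fixes the scalars) to the Lemma~\ref{a3} formula for $a_{-3}$. Since $a_5=a_{-3}^{\tau_1}$, one obtains the companion expression
\[
a_5=a_{-1}-2Z(\lmf,\lmu)(a_0-a_4)+\tfrac{1}{\bt}\bigl[4Z(\lmf,\lmu)(\lmf-\bt)-(2\lmdf-\bt)\bigr](a_1-a_3).
\]
Subtracting the two formulas and using $\bar a_{-3}=\bar a_5$, a routine collection of the terms proportional to $(\bar a_3-\bar a_{-1})$ and those proportional to $(\bar a_0-\bar a_2+\bar a_{-2}-\bar a_4)$ yields (\ref{ast1}); the "$+\bt$'' in the numerator of the combined bracket halves the $2\lmdf$ to the $\lmdf-\bt$ appearing on the right of (\ref{ast1}). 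Identity (\ref{ast2}) is obtained in exactly the same way by applying $\tau_0$ (so that $a_{-4}=a_4^{\tau_0}$) to the formula for $a_4$ and imposing $\bar a_{-4}=\bar a_4$.

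For (\ref{ast3}) and (\ref{ast4}) the argument is a direct substitution into Lemma~\ref{a3}: setting $\bar a_{-3}=\bar a_3$ in the formula for $a_{-3}$, resp.\ $\bar a_{-2}=\bar a_4$ in the formula for $a_4$, the leading summand on the right cancels with the left side, and the remaining equation (after dividing through by $\bt$ where needed) is exactly the identity in question, relating $(\bar a_2-\bar a_{-2})$ to $(\bar a_1-\bar a_{-1})$, resp.\ $(\bar a_{-1}-\bar a_3)$ to $(\bar a_0-\bar a_2)$.

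For (\ref{ast5}) both hypotheses are used simultaneously. First, $\bar a_3=\bar a_{-1}$ yields $\bar a_{-3}=\bar a_1$ after applying $\tau_0$; plugging this and $\bar a_3=\bar a_{-1}$ into the formula for $a_{-3}$ produces a first linear relation between $Z(\lmf,\lmu)(\bar a_2-\bar a_{-2})$ and $(\bar a_1-\bar a_{-1})$. Substituting $\bar a_{-2}=\bar a_4$ together with $\bar a_{-1}=\bar a_3$ into the formula for $a_4$ produces a companion scalar identity of the shape $[4Z(\lmu,\lmf)(\lmu-\bt)-(2\lmd-\bt)](\bar a_0-\bar a_2)=0$. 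Neither of these matches (\ref{ast5}) on its own: to produce the $(\lmu-\lmf)(2\bt-\lmu-\lmf)$ and $\bt^2(\lmd-\lmdf)$ terms one further invokes $\bar E=0$ from Lemma~\ref{u1u2}, which records that $u_1^{\tau_1}v_3^{\tau_1}$ is a $2\bt$-eigenvector for $\ad_{\bar a_2}$; expanding this quadratic product via Lemma~\ref{lambdaa} and then substituting both hypotheses converts the previous two linear relations into the exact form of (\ref{ast5}), after translating $Z(\lmf,\lmu)$ into $Z(\lmu,\lmf)$ using the elementary identity $Z(\lmf,\lmu)-Z(\lmu,\lmf)=(\lmu-\lmf)/\bt^2$. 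This final step, combining two linear relations with the quadratic $\bar E=0$ and keeping the $Z$-substitutions under control, is the main obstacle.
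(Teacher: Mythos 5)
Your treatment of the first four identities is correct and is exactly the paper's argument: apply $\tau_0$ and $\tau_1$ to the two formulas of Lemma~\ref{a3} to get expressions for $\bar a_{-4}$ and $\bar a_5$, impose the hypothesis, and collect the coefficients of $(\bar a_1-\bar a_{-1})$, $(\bar a_{-1}-\bar a_3)$, etc., the ``$+\bt$'' absorbing the factor $2$ exactly as you describe for~(\ref{ast1}) and~(\ref{ast2}). (For~(\ref{ast3}) and~(\ref{ast4}) note that the direct substitution actually leaves the bracket $4Z(\lmf,\lmu)(\lmf-\bt)-(2\lmdf-\bt)$ from Lemma~\ref{a3}, not the printed $-(\lmdf-\bt)$; this is a mismatch between the statement and what both your argument and the paper's produce, not a flaw in your method.)

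For~(\ref{ast5}), however, there is a genuine gap. The two linear relations you extract from Lemma~\ref{a3} under the hypotheses involve only $Z(\lmu,\lmf)$, $\lmu$ and $\lmd$; the terms $-8\bt(\lmu-\lmf)(2\bt-\lmu-\lmf)$ and $-2\bt^2(\lmd-\lmdf)$ in~(\ref{ast5}) cannot come from them, and your proposal to supply the missing input from $\bar E=0$ is never carried out --- you assert that expanding $\bar E$ ``converts the previous two linear relations into the exact form of~(\ref{ast5})'' and then concede this is the main obstacle. There is no evidence this works: $\bar E$ is a product of eigenvector components whose expansion requires the full structure constants of $\Vo$ (Remark~\ref{struct}), and in the paper $\bar E$ is only ever evaluated later, under additional assumptions such as $\lmu=\bt$. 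The paper's actual derivation is different and much more direct: the hypotheses $\bar a_{-2}=\bar a_4$ and $\bar a_3=\bar a_{-1}$ force $\bar s_{0,3}=\bar s_{0,1}$ and hence $\bar s_{1,3}=\bar s_{2,3}$, and~(\ref{ast5}) is then read off from the explicit expression for $s_{1,3}$ (and its $\tau_0$-image $s_{2,3}$) in Lemma~\ref{s3f}; indeed the bracket in~(\ref{ast5}) is visibly the reduction of the $s_{0,1}$-coefficient $\frac{1}{\bt^2}\left[-8(\lmu^2-{\lmf}^2)+24\bt(\lmu-\lmf)+2\bt(\lmd-\lmdf)\right]$ of that formula together with the collapsed $a_i$-coefficients. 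You should replace the appeal to $\bar E=0$ by this use of Lemma~\ref{s3f}, or else actually exhibit the expansion of $\bar E$ and verify that it closes the computation.
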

\begin{proof}
By applying the maps $\tau_0$ and $\tau_1$ to the formulas of Lemma~\ref{a3}  we find similar formulas for $a_{-4}$ and $a_5$. Equations~(\ref{ast1}),  (\ref{ast2}), (\ref{ast3}), and (\ref{ast4}) follow. To prove Equation~\ref{ast5}, note that if $ a_{-2}= a_4$ and $ a_3= a_{-1}$, then $ s_{0,3}= s_{0,1}$. Thus $ s_{1,3}- s_{2,3}=0$ and the claim follows from Lemma~\ref{s3f}.\end{proof}

In view of the above relations, it is important to investigate some subalgebras of the symmetric algebras.
\begin{lemma}\label{subalgebras}
$\:$
\begin{enumerate} 
\item If $U$ is one of the algebras   $4Y(2\bt,\bt)$, $4J(2\bt,\bt)$, or its four dimensional quotient,  then, 
$ U=\langle \langle  a_{-1}- a_1,  a_{0}- a_2\rangle \rangle.
$ 
\item If $U$ is one of the algebras  $3C(\bt)$,  $3C(-1)^\times$, $3A(2\bt,\bt)$, and $V_3(\bt)$,  then, 
$ U=\langle \langle  a_{-1}- a_1,  a_{0}- a_1,\rangle \rangle,
$
unless $U=3C(2)$ and $ch \:\F=5$, or $ch \:\F=3$ and $V=3C(-1)^\times$ or $V=V_3(-1)$.
 \end{enumerate}
\end{lemma}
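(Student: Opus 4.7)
The plan is to verify each case of the lemma by direct computation using the multiplication tables of the listed algebras (Table~\ref{tableV3}, Table~\ref{tableY5}, the tables in~\cite{DM} for $V_5(\bt)$, and the standard multiplication tables for $3C(\bt)$, $3C(-1)^\times$, and $3A(2\bt,\bt)$). Set $u := \bar a_{-1} - \bar a_1$ throughout; in part~(1) set $v := \bar a_0 - \bar a_2$, and in part~(2) set $v := \bar a_0 - \bar a_1$. In each algebra considered, the hypotheses ensure that $u$ and $v$ are nonzero: in part~(1) because $\dim V \geq 4$, so $\bar a_{-1}\neq \bar a_1$ and $\bar a_0 \neq \bar a_2$; in part~(2) because the three axes $\bar a_{-1}, \bar a_0, \bar a_1$ are pairwise distinct outside of the excluded cases.

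For part~(1), the strategy is to compute the products $uv$, $u(uv)$, and $v(uv)$ using the multiplication tables, express each as a linear combination of the basis vectors of $V$, and then verify that these together with $u$ and $v$ span $V$ by checking that the associated coefficient matrix has full rank. The calculation proceeds uniformly for $V_5(\bt)$, for $Y_5(\bt)$, and for their $4$-dimensional quotients, the latter obtained by projecting modulo the one-dimensional ideals identified in Lemma~\ref{simple} and Lemma~\ref{lemmaY5}. Since no characteristic restriction is imposed in part~(1), the rank check should succeed unconditionally over any field of characteristic different from $2$.

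For part~(2), the analogous procedure is carried out: compute $uv$, and, in the $4$-dimensional algebra $3A(2\bt,\bt)$, also $u(uv)$, and verify the spanning property. The excluded cases $3C(2)$ in characteristic $5$ and $3C(-1)^\times$, $V_3(-1)$ in characteristic $3$ should emerge as precisely the configurations where the structure constants force $uv$ to lie in the span of $\{u,v\}$, so that the generated subalgebra is strictly smaller than $V$. These exceptions are identified by setting up the coefficients of $uv$ in the chosen basis and solving for when the components outside $\langle u, v\rangle$ vanish; the tables show that this happens only for the three exceptional $(\bt,\mathrm{ch}\,\F)$ combinations.

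The main obstacle will be the case-by-case bookkeeping: one must verify for each of the seven listed algebras that the relevant products have the required rank in all but the excluded characteristics, and correctly pin down the small-characteristic degeneracies. The computation itself is routine given the explicit multiplication tables, but requires careful tracking of the characteristic-dependent coefficients in order to match the excluded cases in the statement exactly.
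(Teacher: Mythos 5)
Your overall strategy --- compute enough products of the two generators from the explicit multiplication tables and then do a linear-algebra spanning check --- is exactly the paper's strategy. But the specific set of products you commit to, namely $uv$, $u(uv)$ and $v(uv)$ with $u=\bar a_{-1}-\bar a_1$ and $v=\bar a_0-\bar a_2$ (resp.\ $\bar a_0-\bar a_1$), is the wrong one, and your assertion that ``the rank check should succeed unconditionally'' fails already in the first case. In $V_5(\bt)$ one has $\bar a_i\bar a_{i+1}=\bar s_{0,1}+\bt(\bar a_i+\bar a_{i+1})$ for every $i$ and $\bar a_{-1}\bar a_1=\bar a_0\bar a_2=0$, so expanding
$uv=\bar a_{-1}\bar a_0-\bar a_{-1}\bar a_2-\bar a_1\bar a_0+\bar a_1\bar a_2$
the four copies of $\bar s_{0,1}$ cancel and the $\bt$-terms telescope to zero: $uv=0$. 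Hence $u(uv)=v(uv)=0$ as well, your coefficient matrix has rank $2$, and your procedure only recovers $\langle u,v\rangle$, not the $5$-dimensional algebra. The products you must not omit are the \emph{squares}: the subalgebra $\langle\langle u,v\rangle\rangle$ contains $u^2=\bar a_{-1}+\bar a_1$ and $v^2=\bar a_0+\bar a_2$, and together with $u$ and $v$ these give all four axes (characteristic $\neq 2$), hence the generators $\bar a_0,\bar a_1$ of $V$ and therefore all of $V$. This is precisely how the paper argues for $V_5(\bt)$ and its quotient, and the $Y_5(\bt)$ case likewise runs through $(\bar a_{-1}-\bar a_1)^2$, $(\bar a_0-\bar a_2)^2$, a linear combination producing $\bar a_2-\bar a_1$, and then $(\bar a_2-\bar a_1)^2$.

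The same issue infects part~(2): the paper identifies the exceptional cases from the degeneration of the set $\{u,\,v,\,v^2\}$ (e.g.\ in $3C(\bt)$ one computes $(\bar a_0-\bar a_1)^2=(1-\bt)(\bar a_0+\bar a_1)+\bt\bar a_{-1}$ and checks when $u$, $v$, $v^2$ fail to be independent), not from the condition that $uv$ lie in $\langle u,v\rangle$. So the repair is conceptually small --- add $u^2$, $v^2$ (and, where needed, squares of derived elements) to your list of products before running the rank check --- but as written the proposal does not prove the lemma.
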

\begin{proof}
To prove (1), set $W:=\langle \langle  a_{-1}- a_1,  a_{0}- a_2\rangle \rangle$. Let $U$ be equal to $4J(2\bt,\bt)$ or its four dimensional quotient when $\bt=-\frac{1}{4}$. Then $ a_{-1} a_1=0= a_0  a_2$. Thus $( a_{-1}- a_1)^2= a_{-1}+ a_{1}$ and $( a_{0}- a_2)^2= a_{0}+ a_{2}$, whence we get that  $ a_0,  a_1$ belong to $W$ and the claim follows.

Let $U$ be equal to $4Y(2\bt,\bt)$. Then,  $W$ contains the vectors
\begin{eqnarray*}
&&( a_{-1}- a_1)^2= a_{-1}+ a_{1}+(2\bt-1)( a_0+ a_2)-8\bt  s_{0,1},\\
&&( a_{0}- a_2)^2= a_{0}+ a_{2}+(2\bt-1)( a_{-1}+ a_1)-8\bt  s_{0,1},\\
&& a_2- a_1=\frac{1}{4(\bt-1)}\left \{( a_{-1}- a_1)^2-( a_{0}- a_2)^2-2(\bt-1)[( a_{-1}- a_1)+(  a_{0}- a_2)]\right \},
\end{eqnarray*}
and
$$
( a_{2}- a_1)^2= a_2+ a_1-2\bt( a_2+ a_1)-2 s_{0,1}.
$$
It is straightforward to check that the five vectors $ a_{-1}- a_1$, $ a_{0}- a_2$, $( a_{-1}- a_1)^2$, $ a_2- a_1$, and $( a_{2}- a_1)^2$ generate the entire algebra $4Y(2\bt,\bt)$. 

To prove (2), set $W:=\langle \langle  a_{-1}- a_1,  a_{0}- a_1,\rangle \rangle$. Let $U$ be equal to $3C(\bt)$. Then, $W$ contains also $( a_{0}- a_1)^2=(1-\bt)( a_0+ a_{1})+\bt a_{-1}$ and the three vectors generate $U$, unless $\bt=2$ and $ch\:\F=5$. If $V=3A(2\bt, \bt)$, then $W$ contains also the vectors $( a_0- a_1)^2=(1-2\bt)( a_0+ a_1)-2 s_{0,1}$ and $( a_0- a_1)( a_{-1}- a_1)=(1-2\bt) a_1 - s_{0,1}$. Thus we get four vectors that generate $U$.
If $U=V_3(\bt)$, then $W$ contains $( a_0- a_1)^2=(1-3\bt)( a_0+  a_1)-\bt a_{-1}$ and we get that $W$ contains three vectors that generate $U$, unless $\bt=2$ and $ch\:\F=3$. Finally, if $U=3C(-1)^\times$, then  $ a_0- a_1$ and $( a_0- a_1)^2=3( a_0+ a_1)$ generate $U$, unless $ch \:\F\neq 3$.
\end{proof}

We start now to consider the possible configurations.  Many of them are forbidden by the following result which is a immediate consequence of Theorem~1.1 in~\cite{forbidden}. 

\begin{proposition}\label{forbidden}
Let $V$ be a primitive $2$-generated axial algebra of Monster type with generators $ a_0$ and $ a_1$. Let $D_e:=|\{ a_i\:|\: i \in 2\Z\}|$ and $D_o:=|\{ a_i\:|\: i \in 1+2\Z\}|$. Then one of the following occur: $D_e=D_o$, $D_e=2D_o$, or $2D_e=D_o$.
\end{proposition}

\begin{lemma}\label{evenV5}
If $V$ is non-symmetric and $D_e=4$, then $V=V_o$.
\end{lemma}
\begin{proof}
Let us assume that 
$D_e=4$. Then, by Lemma~\ref{forbidden}, $D_o\in \{2,4\}$. Moreover, the vectors $ a_{-2},  a_0,  a_2,  a_4$ are linearly independent and, from the first formula in Lemma~\ref{a3}, we get that $Z(\lmu, \lmf)\neq 0$ and  $ a_{3}\neq  a_{-1}$. Hence $D_o=4$ and $a_1$, $a_{-1}$, $a_{3}$, $a_{-3}$ are linearly independent. Since $a_{-4}= a_4$, Equation~(\ref{ast2}) holds. 

Suppose $2Z(\lmu, \lmf)\left (\lmu-\bt\right )-\left (\lmd-\bt\right )= 0$.  Then, by Equation~(\ref{ast2}), we have $ a_1- a_{-1}+ a_{3}- a_{-3}=0$, a contradiction. 
Hence $2Z(\lmu, \lmf)\left (\lmu-\bt\right )-\left (\lmd-\bt\right )\neq 0$. Then $( a_{-2}- a_2)\in \langle \langle  a_{-1},  a_1\rangle \rangle$. Since $V_o$ is invariant under $\tau_1$, it contains also $ a_{0}- a_4$. Thus by Lemma~\ref{subalgebras}, $V_e\subseteq V_o=V$.  
\end{proof}

\begin{lemma}\label{even33}
If $V$ is non-symmetric and $D_e=3$, then either $V=V_e$ or $V=V_o$.
\end{lemma}

\begin{proof}
Let us assume that $D_e=3$, that is
$ a_4= a_{-2}$. Then, by Proposition~\ref{forbidden}, $D_o=3$ as well. In particular, $ a_{-3}= a_3$ and Equations~(\ref{ast3}) and~(\ref{ast4}) hold. 

If $Z(\lmu, \lmf)=0=Z(\lmf, \lmu)$, then it follows $\lmu=\lmf$ and,  by  Equations~(\ref{ast3}) and~(\ref{ast4}),  $\lmd=\lmdf=\frac{\bt}{2}$. Thus $V$ is symmetric: a contradiction. Therefore, without loss of generality, we may assume $Z(\lmf, \lmu)\neq 0$.
 Then, Equation~(\ref{ast3}) implies 
$$ a_2- a_{-2},\:\:  a_0- a_2 \in V_o.
$$
 If $V_e=\langle \langle  a_2- a_{-2},\:\:  a_0- a_2 \rangle \rangle$, then $V_e\subseteq V_o=V$ and we are done. Otherwise (since $\bt\neq \frac{1}{2}$)  Lemma~\ref{subalgebras} yields that 
  $$ch \:\F=5, \:\:\bt=2, \:\mbox{ and }\:\:V_e\cong 3C(2).
  $$
   If also $Z(\lmu, \lmf)\neq 0$ or $4Z(\lmf, \lmu)\left (\lmf-\bt\right )-\left (2\lmdf-\bt\right )\neq 0$,  from Equations~(\ref{ast4}) and~(\ref{ast3}), respectively, we get that $ a_{-1}- a_{3}$ and $  a_{-1}- a_1$ are contained in $V_e$ and so,  as above, by Lemma~\ref{subalgebras}, either $V_e=V$ or $V_o\cong 3C(2)$. In the latter case, from the formulas in Lemma~\ref{a0s2f} we find
  $$
   s_{0,1}=(\lmu-\bt) a_0-\bt( a_{-1}+ a_1) \:\mbox{ and } \: s_{0,1}=(\lmf-\bt) a_1-\bt( a_{0}+ a_2). 
  $$
Comparing the two expressions and using the invariance of $ s_{0,1}$ under $\tau_0$ and $\tau_1$,  we get
\begin{eqnarray*}
\lmu a_0-\bt a_{-1}&=& \lmf a_1-\bt a_2\\
(\lmu-\bt)( a_0- a_2)&=&\bt( a_{-1}- a_3)\\
(\lmf-\bt)( a_1- a_{-1})&=&\bt( a_{2}- a_{-2}).
\end{eqnarray*}
From the above identities we can express $ a_3,  a_{-2}, $ and $ a_{-1}$ as linear combinations of $ a_0,  a_1,  a_2$. So $V$ has dimension at most $3$ and  hence $V= V_o=V_e$.

Suppose finally $Z(\lmu, \lmf)=0 \mbox{ and } \:4Z(\lmf, \lmu)\left (\lmf-\bt\right )-\left (2\lmdf-\bt\right )= 0.$ 
 Then, we have $\lmf=2\lmu+3$,  $\lmdf=\lmu(\lmu+1)$, and    
 $$
 \begin{array}{l}
 p_2(\lmu,  2\lmu+3, 1, \lmu(\lmu+1))=\lmu(\lmu-2)\\
 p_4(\lmu,  2\lmu+3, 1, \lmu(\lmu+1))=2\lmu^2-\lmu-1.
 \end{array}
 $$
  Hence, by Theorem~\ref{nec}, must be $\lmu=2$ and we get a contradiction to our initial assumption, since $ Z(\lmf,\lmu)=Z(2\lmu+3, \lmu)=2-\lmu=0$.   
  \end{proof}

\begin{lemma}\label{even1}
 If $V$ is non-symmetric and $D_e=1$, then $V=V_o$.
\end{lemma}
\begin{proof}
  Let us assume that 
$D_e=1$, that is $V_e$ is isomorphic to $ 1A$, $\lmd=1$ and, for every $i\in \Z$, $ a_0= a_{2i}$ and $ s_{0,2}=(1-2\bt) a_0$.  Then, the Miyamoto involution $\tau_1$ is the identity on $V$ and the $\bt$-eigenspace for $\ad_{ a_1}$ is trivial. In particular, $ a_{3}= a_{-1}$, and $V_o$ is isomorphic to either $ 1A$, or $3C(2\bt)$, or $2B$.  Then, $ s_{0,3}= s_{0,1}$ and $V$ is  spanned by $ a_{-1},  a_0,  a_1$, and $ s_{0,1}$.
Suppose  $V_o\cong 1A$. Then, $V$ is  spanned  by $ a_0,  a_1$, and $ s_{0,1}$  and $\tau_0$  is the identity on $V$, in particular the $\bt$-eigenspace for $\ad_{ a_0}$ is trivial. This implies that $V$ is an axial algebra of Jordan type,  and hence it is symmetric, a contradiction.

Now suppose $V_o\cong 3C(2\bt)$ or $2B$.  If $Z(\lmu, \lmf)= 0$, then 
$$Z(\lmf, \lmu)=\frac{(4\bt-1)}{2\bt^3}(\lmf-\bt)
$$ and, from the formula for $ a_{-3}$ in Lemma~\ref{a3}, Equation~(\ref{ast5}) and Theorem~\ref{nec}, we get that 
the quadruple $(\lmu, \lmf, 1, \lmdf)$ must be a solution of the system
\begin{equation}\label{1A2X}
\left \{
\begin{array}{l}
Z(\lmu,\lmf)=0\\
(4\bt-1)(\lmf-\bt)^2=\bt^3 \lmdf\\
-\frac{8}{\bt}(\lmu-\lmf)\left ( 2\bt-\lmu-\lmf \right )+\frac{2(2\bt-1)^2}{\bt^2}(\lmf-\bt)-2(1-\lmdf)=0\\
p_2(\lmu, \lmf, 1,\lmdf)=0\\
p_4(\lmu, \lmf, 1,\lmdf)=0.
\end{array}
\right .
\end{equation}
When $\lmdf=0$, the second equation yields that either $\lmf=\bt$ or $\bt=\frac{1}{4}$. In the former case, we obtain $\lm=\lmf=\bt$ from the first equation and the third gives $\lmdf=1$: a contradiction. In the latter case $\bt=\frac{1}{4}$ the system~(\ref{1A2X})
 is equivalent to 
$$
\left \{
\begin{array}{l}
-\frac{1}{32}\lmu+\frac{7}{512}=0\\
16\lmu=0\\
-564\lmu+53=0
\end{array}
\right .
$$
which does not have any solution in any field $\F$.
Finally, when $\lmdf=\bt$, again we get a contradiction since the system~(\ref{1A2X}) does not have any solution in any field $\F$ and for every $\bt$.

 Hence $Z(\lmu, \lmf)\neq 0$. Then, from the formula for $ s_{0,2}$ in Lemma~\ref{a0s2f} we get $ s_{0,1}=(\lmu-\bt) a_0-\frac{\bt}{2}( a_1+ a_{-1})$, whence $V$ is spanned by $a_{-1}, a_0$, and $a_1$ and $ a_0 a_1=\lmu a_0+\frac{\bt}{2}( a_1- a_{-1})$. Now assume $V_o\cong 2B$. Then $a_{-1}a_1=0$ and we see that $\ad_{a_0}$ has eigenvalues $0$, $1$ and $\bt$, while $\ad_{a_1}$ has eigenvalues $0,1$, and $\lambda_1$.  Hence $\lm_1=2\bt$. Then it is straightforward to see that $\ad_{a_0}$ has $0$-eigenvector $4\bt a_0-(a_1+a_{-1})$ and in oder to satisfy the fusion law $\al\star \al=\{0,1\}$ $bt$ must be equal to $\frac{1}{4}$. Then $v:=a_0-\frac{1}{2}(a_1+a_{-1})$ is an $\al$-eigenvector for $\ad_{a_1}$, but $v^2=\frac{1}{2}a_0+\frac{1}{16}(a_1+a_{-1})\not \in \langle a_1, a_{-1}\rangle$. Therefore, the fusion law $\al \star \al=\{0,1\}$ is not satsfied, a contradiction. Therefore, $V_o\cong 3C(2\bt)$ and thus, having the same dimension, $V=V_o$.
\end{proof}

\begin{lemma}\label{V=Vo}
Let $V$ be non-symmetric and suppose that $V=V_o$. Then $\bt=\frac{1}{3}$, 
$V_o\cong 3C(2\bt)$. Moreover, if $a_{-1}, a_1, u$ is a basis of $V_o$ such that $ab=\bt(a+b-c)$ whenever $\{a,b,c\}=\{a_{-1}, a_1, u\}$, then $a_0=\frac{3}{5}(a_{-1}+a_1)-\frac{2}{5}u$.
\end{lemma}
\begin{proof}
Clearly $V=V_o$ has dimension greater than $1$ and so, by Lemmas~\ref{8dim} and~\ref{evenV8(-1/7)}, $V_o$ is isomorphic to one of the following: $2B$, $3C(2\bt)$, $3C(\bt)$,   $3C(-1)^\times$, $3A(2\bt, \bt)$,  $V_3(\bt)$, $4J(2\bt, \bt)$ or its four dimensional quotient, $4Y(2\bt, \bt)$. 

Suppose $V_o\cong 2B$, so that $V=V_o=\langle a_{-1}, a_1\rangle$ and $a_{-1}a_1=0$.  Since $a_0$ is an idempotent distinct from $a_1$ and $a_{-1}$, we get $a_0=a_{-1}+a_1$. Then both $a_{-1}$ and $a_1$ are $1$-eigenvectors for $\ad_{a_0}$, a contradiction.
\medskip

Suppose $V_o\cong 3C(2\bt)$, so that $V=V_o$ has basis $a_{-1}, a_1, u$ where $u^2=u$, and $ab=\bt(a+b-c)$ whenever $\{a,b,c\}=\{a_{-1}, a_1, u\}$. Let $a_0=x_{-1}a_{-1}+x_1a_1+zu$ be the decomposition of $a_0$ with respect to the basis $a_{-1}, a_1, u$. Since $a_0^{\tau_0}=a_0$ and $\tau_{0}$ swaps $a_{-1}$ and $a_1$, we have immediately that $x_{-1}=x_1$. Then we have
$$
0=a_0(a_1-a_{-1})-(a_1-a_{-1})= (2\bt z+x_1-\bt)(a_1-a_{-1})
$$
whence $x_1=\bt-2\bt z$.
Then,
\begin{eqnarray*}
\lefteqn{0=a_0^2-a_0}\\
&=& x_1(2\bt x_1+x_1-1)(a_{-1}+a_1) -(2\bt x_1^2-4\bt x_1z-z^2+z)u.
  \end{eqnarray*} 
  If $x_1=0$, we get $z=\frac{1}{2}$ but also $z^2-z=0$, a contradiction. Hence $x_1\neq 0$ and so 
  $2\bt x_1+x_1-1=0$, whence $2\bt+1\neq 0$ and $x_1=\frac{1}{2\bt+1}$. Then, since $x_1=\bt-2\bt z$, we get $z=\frac{2\bt^2+\bt-1}{2\bt(2\bt+1)}$ and $a_0^2-a_0=\frac{(3\bt-1)(\bt-1)}{4\bt^2}u$. Since $\bt\neq 1$, we get $\bt=\frac{1}{3}$ and $a_0=\frac{3}{5}(a_{-1}+a_1)-\frac{2}{5}u=\mathbbm 1-u$, where $\mathbbm 1=\frac{3}{5}(a_{-1}+a_1+u)$ is the identity of the algebra $3C(\frac{2}{3})$. A straightforward computation shows that $a_0$ is an axis with eigenvalues $1$, $0$ and $\bt$. 
\medskip

Suppose $V_o\cong 3C(\bt)$, so that $V=V_o$ has basis $a_{-1}, a_1, u$ where $u^2=u$, and $ab=\frac{\bt}{2}(a+b-c)$ whenever $\{a,b,c\}=\{a_{-1}, a_1, u\}$. Let $a_0=x_{-1}a_{-1}+x_1a_1+zu$ be the decomposition of $a_0$ with respect to the basis $a_{-1}, a_1, u$. As in the previous case, $x_{-1}=x_1$.
Then we have
$$
0=a_0(a_1-a_{-1})-(a_1-a_{-1})= (\bt z+x_1-\bt)(a_1-a_{-1})
$$
whence $x_1=\bt-\bt z$.
Then,
\begin{eqnarray*}
\lefteqn{0=a_0^2-a_0}\\
&=& x_1(\bt x_1+x_1-1)(a_{-1}+a_1) -(\bt x_1^2-2\bt x_1z-z^2+z)u.
  \end{eqnarray*} 
Now, if $x_1=0$, then $z=1$ and $a_0=u$. This means that $V=\langle \langle a_0, a_1\rangle \rangle$ is symmetric, a contradiction. Hence $x_1\neq 0$, whence $\bt+1\neq 0$, $x_1=\frac{1}{\bt+1}$ and $z=\frac{\bt^2+\bt-1}{\bt(\bt+1)}$. Then we get $a_0^2-a_0=\frac{(2\bt-1)(\bt-1)}{\bt^2}u\neq 0$, a contradiction.
\medskip

Suppose $V_o\cong 3A(2\bt, \bt)$. Then $V=V_o$ has basis $a_{-1}, a_1, a_3, s_{1,2}$. Let 
$$a_0=x_{-1}a_{-1}+x_1a_1+x_3a_3+zs_{1,2}
$$ be the decomposition of $a_0$ with respect to this basis.  As in the previous cases, $x_{-1}=x_1$. Then, 
$$a_2=a_0^{\tau_1}=x_{3}a_{-1}+x_1(a_1+a_3)+zs_{1,2}
$$
 and 
 $$a_{-2}=a_0^{\tau_{-1}}=x_1(a_{-1}+a_3)+x_3a_1+zs_{1,2}.
 $$ 
 Now we compute
\begin{eqnarray*}
\lefteqn{0=a_0(a_{1}-a_{-1})-\bt(a_1-a_{-1})}\\
&=&\frac{1}{(4\bt-1)}\left [(4\bt-1)(x_1+\bt x_3)-\bt^2(7\bt-2)z-\bt(4\bt-1)\right ](a_1-a_{-1})
\end{eqnarray*}
and
\begin{eqnarray*}
\lefteqn{0=a_0^2-a_0-a_2^2+a_2}\\
&=& \frac{(x_1-x_3)}{(4\bt-1)}[(8\bt^2+2\bt-1)x_1-2\bt^2(7\bt-2)z+(4\bt-1)(x_3-1)](a_{-1}-a_3).
\end{eqnarray*}
 Since, by Lemma~\ref{even1}, $V_e\not \cong 1A$, we have $x_3\neq x_1$ and thus, we get 
\begin{equation}\label{3A1}
(4\bt-1)(x_1+\bt x_3)-\bt^2(7\bt-2)z-\bt(4\bt-1)=0
\end{equation}
and 
\begin{equation}\label{3A2}
(8\bt^2+2\bt-1)x_1+(4\bt-1)x_3-2\bt^2(7\bt-2)z-(4\bt-1)=0.
\end{equation}
Subtracting to Equation~(\ref{3A2}) the double of Equation~(\ref{3A1}) we get 
\begin{equation}
(4\bt-1)(2\bt-1)(x_1-x_3+1)=0
\end{equation}
whence (recall $\bt\not \in \{\frac{1}{2}, \frac{1}{4}\}$ in this case) $x_3=x_1+1$. Thus, Equations~(\ref{3A1}) and~(\ref{3A2}) become
\begin{equation}\label{3A11}
(4\bt-1)(\bt+1)x_1-\bt^2(7\bt-2)z=0.
\end{equation}
Suppose $\bt=-1$. Equation~(\ref{3A11}) gives $9z=0$. If $ \F$ has characteristic other than $3$, then $z=0$ and 
$$
0=a_0^2-a_0=-x_1(3x_1+3)(a_{-1}+a_1+a_3)+2x_1( 3x_1+2)s_{1,2} 
$$
whence it follows $x_1=0$. Therefore $a_0=a_3$, and this is a contradiction since $V$ is non-symmetric. If $\F$ has characteristic $3$, then we get 
$$0=a_0^2-a_0=z(a_{-1}+a_1+a_3)+x_1s_{1,2},
$$ whence $z=x_1=0$ and again the contradiction $a_0=a_3$.

Hence $\bt\neq -1$ and from Equation~(\ref{3A11}) we get
$$
x_1=\frac{\bt^2(7\bt-2)}{(4\bt-1)(\bt+1)}z.
$$
In this case we get
\begin{eqnarray*}
\lefteqn{0=4(4\bt-1)^2(\bt+1)^2(a_0^2-a_0)}\\
&=&\bt^2z
  \left [\bt(18\bt^2-\bt-1)(5\bt-1)(8\bt-1)z \right .\\
  &&\:\:\:\:\:\:\:\:\:\left .+4(18\bt^2-8\bt+1)(\bt+1)(4\bt-1)\right ](a_{-1}+a_1+a_3)\\
  &&  2z \left [ \bt(18\bt^2-\bt-1)(5\bt-1)(10\bt^2-1)z \right .\\
 &&\:\:\:\:\:\:\:\:\:\left .+2(12\bt^2+2\bt-1)(\bt+1)(4\bt-1)(3\bt-1)\right ] s_{1,2}
  \end{eqnarray*}
and so either $z=0$, whence $x_1=0$ and $a_0=a_3$, a contradiction as above, or 
\begin{equation}\label{3A3}
\bt(18\bt^2-\bt-1)(5\bt-1)(8\bt-1)z+4(18\bt^2-8\bt+1)(\bt+1)(4\bt-1)=0
\end{equation}
and 
\begin{equation}\label{3A4}
\bt(18\bt^2-\bt-1)(5\bt-1)(10\bt^2-1)z+2(12\bt^2+2\bt-1)(\bt+1)(4\bt-1)(3\bt-1)=0.
\end{equation}
We now multiply Equation~(\ref{3A3}) by $(10\bt^2-1)$ and subtract Equation~(\ref{3A4}) multiplied by $(8\bt-1)$ to get 
$$
2(2\bt-1)^2(18\bt^2-\bt-1)(\bt+1)(4\bt-1)=0.
$$
Therefore, $18\bt^2-\bt-1=0$. Then Equation~(\ref{3A3}) gives $18\bt^2-8\bt+1=0$. Summing and subtracting these two equations we get $9\bt(4\bt-1)=0$ and $7\bt-2=0$, from which it follows that $\F$ has characteristic $3$ and $\bt=-1$, a contradiction to our assumption.
\medskip

Suppose $V_o\cong V_3(\bt)$, with $18\bt^2-\bt-1=0$ and $\F$ of characteristic other than $3$. Then $V=V_o$ has basis $a_{-1}, a_1, a_3$. Let $a_0=x_{-1}a_{-1}+x_1a_1+x_3a_3$ be the decomposition of $a_0$ with respect to this basis.  As in the previous cases,  $x_{-1}=x_1$. Then
\begin{equation*}
0=a_0(a_{1}-a_{-1})-\bt(a_1-a_{-1})=(x_3\bt+x_1-\bt)(a_1-a_{-1})
\end{equation*}
whence $x_1=\bt(1-x_3)$, 
and 
\begin{eqnarray*}
\lefteqn{0=a_0^2-a_0}\\
&=&(x_3-1)\bt[3\bt(\bt-1)x_3-3\bt^2-\bt+1](a_{-1}+a_1)\\
&&+(x_3-1)[(\bt^3-6\bt^2+1)x_3-\bt^3]a_3.
  \end{eqnarray*}
If $x_3=1$, then $x_1=0$ and $a_0=a_3$, a contradiction since $V$ is not symmetric. Hence $x_3\neq 1$ and so $3\bt(\bt-1)x_3-3\bt^2-\bt+1=0$ and $(\bt^3-6\bt^2+1)x_3-\bt^3=0$.  Since $\bt\neq 1$, from the first equation we see that $\F$ has characteristic other than $3$ and $x_3=\frac{3\bt^2+\bt-1}{3\bt(\bt-1)}$, $x_1=\frac{1-4\bt}{3(\bt-1)}$. Then, from the second equation we get 
$$
(7\bt^2-1)(2\bt-1)(\bt+1)=0.
$$
Suppose $7\bt^2-1=0$. Then, since by hypothesis $18\bt^2-\bt-1=0$, we get that $\F$ has caracteristic $19$, $\bt=7$ and $a_0=8(a_{-1}+a_1+a_3)$. It is straightforward so check that the $\bt$-eigenspace for $\ad_{a_0}$ is spanned by $v_1:=3(a_{-1}+a_1)+13a_3$ and $v_2:=3(a_{-1}+a_3)+13a_1$ and since $v_1v_2=6a_{-1}-4(a_1+a_3)\not \in \langle a_0\rangle$, the fusion law $\bt\star \bt=\{0,1,\al\}$ is not satisfied. 
Therefore $\bt=-1$. In this case $a_0=-\frac{5}{6}(a_{-1}+a_{1})+\frac{1}{6}a_3$. It is straightforward to check that $\ad_{a_0}$ has characteristic polynomial $(x-1)(x+1)(x-3)$. Since $\F$ has characteristic other than $3$, must be $3=2\bt=-2$, that is $\F$ has characteristic $5$. Then $a_0=a_3$, a contradiction.
\medskip

Suppose $V_o\cong 3C(-1)^\times$ (and $\F$ has characteristic other than $3$ since $\al=2\bt\neq 1$). Then $V=V_o$ has basis $a_{-1}, a_1$ and $a_{-1}a_1=-a_{-1}-a_1$. Let $a_0=x_{-1}a_{-1}+x_1a_1$ be the decomposition of $a_0$ with respect to this basis.  We have
\begin{equation*}
0=a_0(a_1-a_{-1})-\bt(a_1-a_{-1})=-(2x_{-1}-x_1+1)a_{-1}-(x_{-1}-2x_1-1)a_1
\end{equation*}
whence $x_{-1}=x_1=-1$. Thus $a_0=-a_{-1}-a_1$. Then $\tau_{-1}$ swaps $a_1$ and $a_0$ and hence $V$ is symmetric, a contradiction.
\medskip

Suppose $V_o\cong 4J(2\bt, \bt)$. Then a basis for $V$ is given by 
$a_{-3}, a_{-1}, a_1, a_3, u$, where $u:=-\frac{2}{\bt}s_{1,2}$. Let $a_0=x_{-3}a_{-3}+x_{-1}a_{-1}+x_1a_1+x_3a_3+zu$ be the decomposition of $a_0$ with respect to this basis. As in the previous cases, since $a_0^{\tau_0}=a_0$, we have $x_{-3}=x_3$ and $x_{-1}=x_1$. Then, using the multiplication for the chosen basis given in~\cite[Table~7]{DM}, we get
\begin{eqnarray*}
\lefteqn{0=a_0(a_1-a_{-1})-\bt(a_1-a_{-1})=}\\
&=&  \bt(x_{3}-z)(a_3-a_{-3})+(x_{1}+\bt x_{3}+3\bt z-\bt)(a_1-a_{-1})
\end{eqnarray*}
and 
\begin{eqnarray*}
\lefteqn{0=a_0(a_3-a_{-3})-\bt(a_3-a_{-3})=}\\
&=& (\bt x_1+3\bt z+x_3-\bt)(a_3-a_{-3})+\bt(x_{1}-z)(a_{1}-a_{-1}) 
\end{eqnarray*}
whence, $x_1=x_3=z$ and $(4\bt+1) z-\bt=0$. Thus, if $\bt=-\frac{1}{4}$ we get  immediately $\bt=0$, a contradiction. Hence $\bt\neq -\frac{1}{4}$, $z=\frac{\bt}{4\bt+1}$ and $a_0=\frac{\bt}{4\bt+1}(a_{-3}+a_{-1}+a_1+a_3+u)$. Then, $a_0^2=\bt a_0\neq a_0$, a contradiction.

Suppose now that $\bt=-\frac{1}{4}$ and $V=V_o$ is isomorphic to the four dimensional quotient of $4J(2\bt, \bt)$, $4J(2\bt, \bt)/I$ where $I=\langle a_{-3}+a_{-1}+a_1+a_3+u\rangle$. Then the calculations used above for the case of $4J(2\bt, \bt)$ are easily adopted and produce a contradiction. 
\medskip

Assume finally that $V=V_o\cong Y_4(2\bt, \bt)$, with  $4\bt^2+2\bt-1=0$. Then a basis for $V$ is given by 
$a_{-3}, a_{-1}, a_1, a_3, s$. Let $a_0=x_{-3}a_{-3}+x_{-1}a_{-1}+x_1a_1+x_3a_3+zs_{1,2}$ be the decomposition of $a_0$ with respect to this basis. As above, $x_{-3}=x_3$ and $x_{-1}=x_1$. Then, using the multiplication for the chosen basis given in~Table~3, we get
\begin{eqnarray*}
\lefteqn{0=a_0(a_1-a_{-1})-\bt(a_1-a_{-1})=}\\
&=& -[ \bt x_1+\bt x_3+\frac{1}{2}x_3-\frac{1}{2}\bt^2 z-\bt]a_{-3}+ [ \bt x_{1}+\bt x_{3}+\frac{1}{2}x_1-\frac{1}{2}\bt^2 z-\bt]a_{-1}\\
&&+[(\bt (x_1-x_3)+\frac{1}{2}x_3   +\frac{1}{2}\bt^2 z]a_1+[ (\bt(x_1-x_3)-\frac{1}{2}x_1-\frac{1}{2}\bt^2 z]a_3\\
  &&-4\bt (x_1-x_3 )s
\end{eqnarray*}
whence $x_1=x_3$,
and 
\begin{eqnarray*}
\lefteqn{0=2a_0(a_3-a_{-3})-\bt(a_3-a_{-3})=}\\
&=&(x_1+\bt^2 z)(a_{-1}-a_{-3})+\left [ (4\bt+1 )x_{1}-\bt^2 z-2\bt \right ](a_1-a_3)
  \end{eqnarray*}
  whence $x_1=-\bt^2 z$ and $\bt(2\bt+1) z+1=0$. Thus, if $\bt=-\frac{1}{2}$ we get  immediately a contradiction. If $\bt\neq -\frac{1}{2}$, then $z=-\frac{1}{\bt(2\bt+1)}$ and 
  $$
  a_0=\frac{\bt}{2\bt+1}(a_{-3}+a_{-1}+a_1+a_3)-\frac{1}{\bt(2\bt+1)}s.
  $$ From the condition $a_0^2=a_0$ we get that 
 $$
 32\bt^5+16\bt^4-16\bt^3+4\bt^2+5\bt-1=0.
 $$
Since also $4\bt^2+2\bt-1=0$, this leads to a contradiction. 
\end{proof}

{\it Proof of Theorem~\ref{thm2}.}
Let $V$ be a non-symmetric $2$-generated primitive axial algebra of Monster type $(2\bt, \bt)$ with generators $a_0$ and $a_1$.  If $V=V_o$ or $V=V_e$, then Lemma~\ref{V=Vo} yields that claim (3) holds. So, let us assume that  $V_e\neq V\neq V_o$. By Lemmas~\ref{8dim}, \ref{evenV8(-1/7)}, \ref{evenV5}, \ref{even33}, and ~\ref{even1} the even and the odd subalgebras $V_e$ and $V_o$ are isomorphic to either $2B$ or $3C(2\bt)$ and by Lemma~\ref{simple}, $(\lmd, \lmdf)\in \{(0,0), (\bt, \bt), (0,\bt), (\bt, 0)\}$.  Moreover, $a_{-1}=a_3$, $a_1=a_{-3}$, $a_{4}=a_0$, $a_{-2}=a_2$, and from Lemma~\ref{a3} we get that $(\lmu, \lmf, \lmd, \lmdf)$ satisfies  the following equations
\begin{equation}\label{AA}
Z(\lmu, \lmf)(\lmu-\bt)=\frac{\lmd}{2}
\end{equation}
and 
\begin{equation}\label{BB}
Z(\lmf, \lmu)(\lmf-\bt)=\frac{\lmdf}{2}.
\end{equation}
Suppose first that $\lmd=\lmdf$. Then we get $Z(\lmu, \lmf)(\lmu-\bt)=Z(\lmf, \lmu)(\lmf-\bt)$, which is equivalent to 
$(\lmu-\lmf)(\lmu+\lmf-2\bt)=0$ and so $\lmu+\lmf-2\bt=0$, since $\lmu\neq \lmf$ as the algebra is non-symmetric.
Then, Equations~(\ref{AA}) and~(\ref{BB})  are equivalent to 
\begin{equation}\label{AAA}
\frac{1}{\bt^2}(\lmu-\bt)^2=\frac{\lmd}{2} \:\:\mbox{ and }\:\: \frac{1}{\bt^2}(\lmf-\bt)^2=\frac{\lmdf}{2} 
\end{equation}
If $\lmd=\lmdf=0$, we get the solution $(\bt, \bt, 0,0)$ which corresponds to a symmetric algebra, a contradiction.
Suppose $\lmd=\lmdf=\bt$.  Then it is long but straightforward to check that there is no quadruple $(\lmu, \lmf, \bt, \bt)$ which is a common solution of~(\ref{AAA}) and of the set of polynomials $T$ defined in (\ref{T}). A contradiction to Theorem~\ref{nec}.

Finally assume that $\lmd=\bt$ and $\lmdf=0$. Then, by Equation~(\ref{BB}), either $Z(\lmf, \lmu)=0$ or $\lmf=\bt$. 
If $Z(\lmf, \lmu)=0$, we check that no quadruple $(\lmu, \lmf, \bt, 0)$ is a common solution of Equation~(\ref{AA}) and of the set of polynomials $T$ defined in (\ref{T}). So $\lmf=\bt$. Then Equation~(\ref{AA}) becomes
$$
(\lmu-\bt)^2=\frac{\bt^2}{4}
$$
and we get the two quadruples $(\frac{3}{2}\bt, \bt, \bt, 0)$ and $(\frac{\bt}{2}, \bt, \bt, 0)$. A direct check shows that the former one is not a solution of the set $T$, contradicting Theorem~\ref{nec}. If $(\lmu, \lmf, \lmd, \lmdf)=(\frac{\bt}{2}, \bt, \bt, 0)$, then from Lemma~\ref{a0s2f} we get $s_{0,1}=-\bt( a_0+ a_2)$ and so $V$ has dimension at most $4$. Moreover, $V$ satisfies the same multiplication table as the algebra  $Q_2(\bt)$. By Theorem~8.6 in~\cite{DM}, for $\bt\neq -\frac{1}{2}$ the algebra $Q_2(\bt)$ is simple, while it has a $3$-dimensional quotient over the radical $\F( a_0+ a_1+ a_2+ a_{-1})$ when $\bt=-\frac{1}{2}$. Thus claim (2) follows. \hfill $\square$


\end{document}